\newtheorem{theorem}{Theorem}[]
\newtheorem{lemma}{Lemma}[]
\newtheorem*{remark}{Remark}{}
\newtheorem{proposition}{Proposition}[section]
\title{A lower bound on high moments of character sums}
\author{Barnabás Szabó }
\address{Mathematics Institute, Zeeman Building, University of Warwick, Coventry CV4 7AL, England}
\email{\tt Barnabas.Szabo@warwick.ac.uk}
\begin{document}

\begin{abstract}
    For any real $k\geq 2$ and large prime $q$, we prove a lower bound on the $2k$-th moment of the Dirichlet character sum
    \begin{equation*}
    \frac{1}{\phi(q)} \sum_{\substack{\chi \text{ mod }q\\ \chi\neq \chi_0}} \Big| \sum_{n\leq x} \chi(n)\Big|^{2k},
\end{equation*}
where $1\leq x\leq q$, and $\chi$ is summed over the set of non-trivial Dirichlet characters mod $q$. Our bound is known to be optimal up to a constant factor under the Generalised Riemann Hypothesis. We also get a sharp lower bound on moments of theta functions using the same method.
\end{abstract}
\maketitle
\section{Introduction}
In a short but very elegant paper \cite{rudnick2005lower}, Rudnick and Soundararajan developed a method to give lower bounds for moments of families of $L$-functions, which are believed to be optimal up to constants. In particular, they show that if $k$ is a natural number and $q$ is a prime, then
\begin{equation*}
    \frac{1}{\phi(q) }\sum_{\substack{\chi \text{ mod }q\\ \chi\neq \chi_0}} |L(1/2, \chi)|^{2k}\gg_k (\log q)^{k^2},
\end{equation*}
where $\chi$ runs through the set of non-trivial Dirichlet characters mod $q$, and $L(s,\chi)$ is the Dirichlet $L$-function associated to $\chi$. The method of proof can be described as follows. For each character $\chi$ mod $q$, one finds a suitable proxy object $A(\chi)\geq 0$, and applies Hölder's inequality in the form (assuming $k\geq 2$)
\begin{equation*}
    \Big(\sum_{\substack{\chi \text{ mod }q\\ \chi\neq \chi_0}} |L(1/2, \chi)|^{2k}\Big)^{1/k}\cdot\Big(\sum_{\substack{\chi \text{ mod }q\\ \chi\neq \chi_0}} A(\chi)^{\frac{k}{k-1} }\Big)^{\frac{k-1}{k}}\geq \sum_{\substack{\chi \text{ mod }q\\ \chi\neq \chi_0}} |L(1/2,\chi)|^2A(\chi).
\end{equation*}
If we choose $A(\chi)$, so that we are able to calculate the second and the third sum in this expression, then we certainly get a lower bound on the $2k$-th moment of $L(1/2,\chi)$, however our choice of $A(\chi)$ should be such that the lower bound we get is close to optimal. The case of equality in Hölder's inequality tells us that we should choose $A(\chi)$ to be such that it behaves like $|L(1/2,\chi)|^{2(k-1)}$. Rudnick and Soundararajan choose $ A(\chi)=\big|\sum_{n\leq x}\frac{\chi(n)}{n^{1/2}}\big|^{2(k-1)}$ with $x=q^{\frac{1}{2k}}$, which is reasonable since $L(1/2,\chi)=\sum_{n=1}^{\infty} \frac{\chi(n)}{n^{1/2} }$. 

We use the proof method described above to give a lower bound on sufficiently high moments of unweighted character sums. Our theorem reads as follows.
\begin{theorem}
\label{t1}
    Let $k\geq 2$ be a real number, $q$ a large prime. If $1\leq x\leq q^{1/2}$, then
\begin{equation*}
    \frac{1}{\phi(q)}\sum_{\substack{\chi \text{ mod }q\\ \chi\neq \chi_0}} \bigg| \sum_{n\leq x} \chi(n)\bigg|^{2k}\gg_k x^k (\log x)^{(k-1)^2}. 
\end{equation*}
If $q^{1/2}\leq x\leq q/2$, then
\begin{equation*}
    \frac{1}{\phi(q)} \sum_{\substack{\chi \text{ mod }q\\ \chi\neq \chi_0}} \bigg| \sum_{n\leq x} \chi(n)\bigg|^{2k}\gg_k x^k \Big(\log \frac{q}{x}\Big)^{(k-1)^2}.
\end{equation*}
\end{theorem}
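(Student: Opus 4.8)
One may assume $x$ exceeds any fixed constant depending on $k$, since otherwise the asserted lower bound is $O_k(1)$ while the left--hand side is $\gg 1$. The plan is to run the Rudnick--Soundararajan argument with the short character sum itself as the building block of the proxy. Write $\mathcal M:=\sum_{\chi\neq\chi_0}\big|\sum_{n\le x}\chi(n)\big|^{2k}$. I would fix a parameter $y$ thus: for $x\le q^{1/k}$ take $y\asymp x$, and for $q^{1/k}\le x\le q/2$ take $y\asymp (q/x)^{1/(k-1)}$, the implied constants being small enough that $xy^{k-1}\le q/2$ and $y^{k}\le q$; one then checks that $y\le x$, that $\log y\asymp_k\log x$ in the first range, and that $\log y\asymp_k\log(q/x)$ in the second. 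Put $D(\chi):=\sum_{m\le y}\chi(m)$ and $A(\chi):=|D(\chi)|^{2(k-1)}\ge 0$. Hölder's inequality with conjugate exponents $k$ and $k/(k-1)$, exactly as in the display in the introduction, then gives
\begin{equation*}
\mathcal M\ \ge\ \frac{\mathcal T^{\,k}}{\mathcal D^{\,k-1}},\qquad \mathcal D:=\sum_{\chi\neq\chi_0}|D(\chi)|^{2k},\qquad \mathcal T:=\sum_{\chi\neq\chi_0}\Big|\sum_{n\le x}\chi(n)\Big|^{2}|D(\chi)|^{2(k-1)},
\end{equation*}
so it suffices to bound $\mathcal D$ from above and $\mathcal T$ from below.

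Both are handled by orthogonality: $\sum_{\chi\bmod q}\chi(a)\overline{\chi(b)}=\phi(q)$ when $a\equiv b\pmod q$ and $(ab,q)=1$, and $0$ otherwise. Expanding $|D(\chi)|^{2k}$, every product $m_1\cdots m_k$ with all $m_i\le y$ is $\le y^k\le q$ and coprime to $q$, so the congruence forced by orthogonality collapses to an equality, giving $\mathcal D\le\phi(q)N_k(y)$ with
\begin{equation*}
N_k(y):=\#\{(m_i,m_i')\in[1,y]^{2k}:\ m_1\cdots m_k=m_1'\cdots m_k'\};
\end{equation*}
the discarded principal--character term is $\le y^{2k}\le qy^{k}\ll\phi(q)y^{k}$, hence negligible. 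Likewise, since $n_1m_1\cdots m_{k-1}\le xy^{k-1}\le q$ and is coprime to $q$, one gets $\mathcal T\ge\phi(q)R-O(x^{2}y^{2(k-1)})$ with $R:=\#\{n_j\le x,\ m_i,m_i'\le y\ (i<k):\ n_1\prod_{i<k}m_i=n_2\prod_{i<k}m_i'\}$, and the error $x^{2}y^{2(k-1)}\le q\cdot xy^{k-1}\ll\phi(q)xy^{k-1}$ is again absorbed.

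Thus everything reduces to the two arithmetic estimates
\begin{equation*}
N_k(y)\ \ll_k\ y^{k}(\log y)^{(k-1)^2}\qquad\text{and}\qquad R\ \gg_k\ x\,y^{k-1}(\log y)^{(k-1)^2}
\end{equation*}
(the second valid because $y\le x$). Granting them we have $\mathcal D\ll_k\phi(q)y^{k}(\log y)^{(k-1)^2}$ and $\mathcal T\gg_k\phi(q)xy^{k-1}(\log y)^{(k-1)^2}$, so the Hölder bound yields $\mathcal M\gg_k\phi(q)x^{k}(\log y)^{\,k(k-1)^2-(k-1)^3}=\phi(q)x^{k}(\log y)^{(k-1)^2}$; dividing by $\phi(q)$ and inserting $\log y\asymp_k\log x$ in the first range, $\log y\asymp_k\log(q/x)$ in the second, is the theorem. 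The hypothesis $x\le q/2$ is used only to guarantee $x^{2}\ll\phi(q)x$, so that the principal character remains lower order in the second--moment computations. For non--integer $k$ the exponents $2k$ and $2(k-1)$ are not even integers, so the two orthogonality identities must be replaced by interpolation between the adjacent integer moments --- or, more cleanly, $D$ is taken to be a product of short character sums over disjoint blocks of primes, so that every moment needed stays directly computable, in the spirit of Radziwiłł and Soundararajan.

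The main obstacle is establishing these two counts with the exact exponent $(k-1)^2$. The upper bound on $N_k(y)$ is the easier: already $N_k(y)\le\sum_{m_1,\dots,m_k\le y}d_k(m_1\cdots m_k)\le\big(\sum_{m\le y}d_k(m)\big)^{k}\ll y^{k}(\log y)^{k^2-k}$ by submultiplicativity of $d_k$, and a sharper argument (e.g.\ splitting by the largest prime power dividing $m_1\cdots m_k$, or evaluating $\tfrac1{2T}\int_{-T}^{T}|\sum_{m\le y}m^{it}|^{2k}\,dt$ by Perron's formula) improves $k^2-k$ to $(k-1)^2$. The lower bound on $R$ is where the real work lies: the diagonal $n_1=n_2$, $m_i=m_i'$ contributes only $\gg xy^{k-1}$, so every logarithm has to come from solutions with $\prod_{i<k}m_i\ne\prod_{i<k}m_i'$ but these two products sharing a large common factor. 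For $k=2$ this is transparent --- $R=\#\{n_1m=n_2m':n_j\le x,\ m,m'\le y\}\asymp x\sum_{m,m'\le y}\gcd(m,m')/\max(m,m')\asymp xy\log y$, which is exactly $xy^{k-1}(\log y)^{(k-1)^2}$ --- but for $k\ge 3$ I would produce the needed $(k-1)^2$ logarithms by a grid parametrisation, writing $m_i=\prod_{j<k}d_{ij}$ and $m_i'=\prod_{j<k}d_{ji}$ with the integers $d_{ij}$ free, together with a Cauchy--Schwarz bound that controls the over--counting of such parametrisations in terms of the relevant contingency--table counts; extracting the sharp power $(k-1)^2$ here, rather than a larger one which would be washed out by the Hölder step, is the delicate point.
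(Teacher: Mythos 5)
Your plan is essentially the Rudnick--Soundararajan/Munsch--Shparlinski argument with the proxy $A(\chi)=|\sum_{m\le y}\chi(m)|^{2(k-1)}$, and as written it has two genuine gaps. First, the two arithmetic estimates you reduce everything to are not established: the upper bound $N_k(y)\ll_k y^k(\log y)^{(k-1)^2}$ is only asserted (your divisor-function bound gives the insufficient exponent $k^2-k$), and the lower bound $R\gg_k xy^{k-1}(\log y)^{(k-1)^2}$ --- which you yourself identify as ``where the real work lies'' --- is left as a sketch of a grid parametrisation. Even granting integrality of $k$, supplying these counts with the sharp exponent $(k-1)^2$ is essentially the content of the Munsch--Shparlinski proof, so the proposal defers rather than solves the main quantitative work.

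Second, and more fundamentally, the orthogonality expansions of $\mathcal D=\sum_\chi|D(\chi)|^{2k}$ and $\mathcal T=\sum_\chi|\sum_{n\le x}\chi(n)|^2|D(\chi)|^{2(k-1)}$ require $k$ and $k-1$ to be integers, whereas the theorem is for all real $k\ge 2$ and the non-integer case is precisely the new content here (the integer case is already in the literature). Your proposed fixes do not close this. Interpolation from integer moments fails to give the sharp exponent: by log-convexity between the $2$nd and $2k$-th moments one gets at best $M_{2k}\gg x^k(\log x)^{(\lfloor k\rfloor-1)(k-1)}$, which loses a positive power of $\log$ whenever $k\notin\mathbb N$; interpolating downward from the $2(\lfloor k\rfloor+1)$-th moment instead would require a sharp \emph{unconditional upper} bound on that moment, which (as discussed in the introduction of the paper) is out of reach because a single badly behaving character could dominate. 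The alternative you mention --- replacing $D$ by a product over disjoint blocks of primes --- is the right instinct, but it is not developed, and carrying it out for non-integer exponents forces one to work with (truncated) exponentials of Dirichlet polynomials over primes, evaluated at a family of shifts $1/2+il/\log y$, together with a mechanism for controlling the fractional power $R(\chi)^{k/(k-1)}$; that is exactly the construction the paper builds (Sections 3--5), via a conditioning/sieve lower bound for $\mathbb{E}^{(y)}|\sum_{n\le x}f(n)|^2$, Euler-product correlation estimates, and a careful treatment of the truncation errors. So the proposal is not a complete proof, nor a genuinely different working route: where it is concrete it reproduces the known integer-$k$ argument, and where it must handle real $k$ it gestures at ideas that either provably lose the sharp $(\log)^{(k-1)^2}$ or collapse back into the paper's actual method.
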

Theorem \ref{t1} has been proven in the special case when $k\in \mathbb{N}$ and $x=q^{\alpha}$ for a fixed $0<\alpha<1$ by Munsch and Shparlinski  (see Theorem 1.6. in \cite{munsch2016upper}), so our main contribution is that we are able to handle non-integer moments in a wider range.
Note that the bounds in Theorem \ref{t1} have a slightly different shape according to whether $x\leq q^{1/2}$ or $x\geq q^{1/2}$, which comes from the fact that the quantities $\sum_{n\leq x}\chi(n)$ and $\sum_{n\leq q/x}\chi(n)$ are closely related by a reciprocity relation. When $q/2\leq x\leq q$, a similar type of lower bound follows easily from Theorem \ref{t1}, since in this range the orthogonality relation and periodicity imply $\big| \sum_{n\leq x}\chi(n)\big|=\big|\sum_{x<n< q}\chi(n)\big|=\big|\sum_{ n<q-x}\chi(n)\big|$, where $0\leq q-x\leq q/2$ so Theorem \ref{t1} is applicable (we did not include this range in the statement for simplicity).

Theta functions are closely related to character sums. For a Dirichlet character $\chi$ mod $q$ define $\kappa=\kappa(\chi)=(1-\chi(-1))/2$, i.e. $\kappa=1$ if $\chi$ is odd and $\kappa=0$ if $\chi$ is even. The $\theta$ function corresponding to $\chi$ is defined as 
\begin{equation*}
\theta(x,\chi)=\sum_{n=1}^{\infty} \chi(n) n^{\kappa} e^{-\pi n^2 x/q}.
\end{equation*}
Modifying the proof of Theorem \ref{t1}, we will show the following.
\begin{theorem}
\label{t2}
    Let $X_q^+$ be the set of even characters mod $q$,  let $k\geq 2$ be a real number. We have
 \begin{equation*}
    \frac{1}{\phi(q)}\sum_{\chi \in X_q^+\backslash \{ \chi_0\} } |\theta(1,\chi)|^{2k} \gg_k q^{k/2}(\log q)^{(k-1)^2}.
\end{equation*}
Let $X_q^-$ denote the set of odd characters mod $q$, let $k\geq 2$ be a real number. We have
\begin{equation*}
    \frac{1}{\phi(q)}\sum_{\chi \in X_q^- } |\theta(1,\chi)|^{2k} \gg_k q^{3k/2}(\log q)^{(k-1)^2}.
\end{equation*}
\end{theorem}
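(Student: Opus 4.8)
The plan is to mirror the proof of Theorem~\ref{t1} with the parameter $x$ replaced by the effective length $q^{1/2}$ of the Gaussian-truncated sum defining $\theta(1,\chi)$; this is what produces the powers $q^{k/2}$ (even case) and $q^{3k/2}$ (odd case). Indeed $e^{-\pi n^2/q}$ is $\asymp 1$ for $n\ll q^{1/2}$ and super-exponentially small for $n\gg q^{1/2}$, so for even $\chi$ the quantity $\theta(1,\chi)$ behaves like the unweighted sum $\sum_{n\le q^{1/2}}\chi(n)$, while for odd $\chi$ the extra weight $n^{\kappa}=n$ makes it behave like $q^{1/2}$ times such a sum. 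One sees this already at the level of second moments: opening the square and applying orthogonality over $X_q^{+}\setminus\{\chi_0\}$ (resp. $X_q^{-}$), the diagonal gives $\tfrac1{\phi(q)}\sum_{\chi\in X_q^{+}}|\theta(1,\chi)|^2\asymp \sum_{m}e^{-2\pi m^2/q}\asymp q^{1/2}$ and $\tfrac1{\phi(q)}\sum_{\chi\in X_q^{-}}|\theta(1,\chi)|^2\asymp \sum_m m^2 e^{-2\pi m^2/q}\asymp q^{3/2}$.

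I would run Hölder's inequality exactly as in the introduction, but over $X_q^{+}\setminus\{\chi_0\}$ (resp. $X_q^{-}$), to get
\[
\frac1{\phi(q)}\sum_{\chi\in X_q^{\pm}}|\theta(1,\chi)|^{2k}\ \ge\ \frac{\big(\tfrac1{\phi(q)}\sum_{\chi}|\theta(1,\chi)|^2 A(\chi)\big)^{k}}{\big(\tfrac1{\phi(q)}\sum_{\chi}A(\chi)^{k/(k-1)}\big)^{k-1}},
\]
where, following the proof of Theorem~\ref{t1} with $x=q^{1/2}$, the proxy $A(\chi)$ is modelled on $|\theta(1,\chi)|^{2(k-1)}$: take a Dirichlet polynomial $B(\chi)=\sum_{n\le y}b_n\chi(n)$ with $y$ a small fixed power of $q$ and coefficients $b_n$ built from the generalised divisor function $d_{k-1}$ (in the odd case additionally carrying the weight $n$, to match the shape of $\theta(1,\chi)$ for odd $\chi$), and set $A(\chi)=|B(\chi)|^{2(k-1)}$. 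It is essential that it is the \emph{non-principal} characters that survive: for $k>2$ one has $\tfrac1{\phi(q)}|\theta(1,\chi_0)|^{2k}\asymp q^{k-1}$, which exceeds the claimed bound, so $\chi_0$ must be removed at the outset. For non-integer $k$, $A(\chi)^{k/(k-1)}$ is not a finite Dirichlet polynomial; as in Theorem~\ref{t1} one takes $B$ to be a product of short Dirichlet polynomials over dyadic ranges of primes, so that each factor is close to $1$ and $|B(\chi)|^{2k/(k-1)}$ can be Taylor-expanded and summed factor-by-factor via orthogonality with no loss in the power of $\log q$ (the continuous-moment device underlying the real-$k$ part of Theorem~\ref{t1}).

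The new work is to redo the two mean-value computations in the even/odd setting. Opening $|\theta(1,\chi)|^2=\sum_{m,n}\chi(m)\bar\chi(n)(mn)^{\kappa}e^{-\pi(m^2+n^2)/q}$ together with $A(\chi)$ and summing over $\chi$ by means of
\[
\sum_{\chi\in X_q^{+}\setminus\{\chi_0\}}\chi(a)\bar\chi(b)=\tfrac{\phi(q)}{2}\big(\mathbf 1_{a\equiv b}+\mathbf 1_{a\equiv -b}\big)-1,\qquad (ab,q)=1,
\]
and the analogous identity with a minus sign and no $\chi_0$-term for $X_q^{-}$, the main term comes from the genuine diagonal $a=b$ and, after choosing $y$ small enough, reduces to the same divisor-correlation sums that appear in the proof of Theorem~\ref{t1}, up to the harmless smooth Gaussian weights; these produce the factor $(\log q)^{(k-1)^2}$, while the $(mn)^{\kappa}$ weight and the Gaussian set the overall scale at $q^{k/2}$ (resp. $q^{3k/2}$). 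The remaining contributions are negligible: in the condition $a\equiv -b$ and in the genuine off-diagonal $a\equiv b,\ a\neq b$ one of the variables is forced to be $\gtrsim q$, hence $\gg q^{1/2}$, so the Gaussian weight $e^{-\pi(\cdot)^2/q}$ carried by $\theta$ makes such terms super-exponentially small; and the $-1$ from removing $\chi_0$ contributes only $O\big((\sum_n|b_n|)^{O(1)}\big)$, far below the main term.

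The step I expect to be the main obstacle is, exactly as in Theorem~\ref{t1}, controlling the fractional moment $\tfrac1{\phi(q)}\sum_{\chi}A(\chi)^{k/(k-1)}=\tfrac1{\phi(q)}\sum_{\chi}|B(\chi)|^{2k/(k-1)}$ to the correct order when $k$ is not an integer: naive interpolation between the second and fourth moments of $B(\chi)$ overshoots by a power of $\log q$ and would only yield the bound with exponent $(k-1)^2-\tfrac{k-2}{k-1}$ on the logarithm, so the product-over-primes construction is genuinely needed. Granting that ingredient from the proof of Theorem~\ref{t1}, everything specific to Theorem~\ref{t2} — the Gaussian smoothing, the even/odd orthogonality and its spurious reflected-diagonal term, the removal of $\chi_0$, and the $n^{\kappa}$ weight in the odd case — is routine bookkeeping.
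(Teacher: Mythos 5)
Your overall architecture matches the paper's: keep the proxy machinery of Theorem \ref{t1} at effective length $q^{1/2}$, apply H\"older over $X_q^{\pm}$, use the even/odd orthogonality relation, kill the reflected diagonal $a\equiv -b$ by the Gaussian decay, and discard $\chi_0$. All of that is sound and is exactly what the paper does (it truncates $\theta$ at $x=q^{1/2}(\log q)^2$ first, but your version of the bookkeeping is equivalent), and the fractional-moment upper bound indeed transfers verbatim since it never sees the theta weights.

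The genuine gap is in the sentence where you declare the Gaussian weights ``harmless'' and assert that the mixed moment ``reduces to the same divisor-correlation sums that appear in the proof of Theorem \ref{t1}.'' For non-integer $k$ there are no divisor-correlation sums to reduce to: after orthogonality one must prove the analogue of Proposition \ref{p3}, namely $\mathbb{E}\big|\sum_{n\leq x} f(n)e^{-\pi n^2/q}\big|^2 R(f)\gg x(\log y)^{k^2-1}$, and the paper's proof of Proposition \ref{p3} goes through conditioning on $(f(p))_{p\leq y}$ followed by Lemma \ref{condition_y}, whose mechanism is the factorization $n=a(n)b(n)$ into $y$-smooth and $y$-rough parts plus the Parseval identity of Lemma \ref{parseval} applied to the smooth-supported partial sums. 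The weight $e^{-\pi (mn)^2/q}$ couples the smooth variable $m$ to the rough variable $n$ and is not multiplicative, so this argument breaks: the conditional inner sums are no longer partial sums of a fixed Dirichlet series, and Lemma \ref{parseval} does not apply. This is precisely the point the paper singles out as the new difficulty, and resolving it requires real work: a slow-variation estimate for $g(t)=\sum_{P^+(m)\leq y} e^{-\pi (mt)^2/q}f(m)$ (Lemma \ref{smallchange}), a comparison of the sieved sum over $n$ in short intervals with an integral, the substitution $v=q^{1/2}/u$, and then a Mellin--Parseval identity (the full Theorem 5.4 of Montgomery--Vaughan rather than Lemma \ref{parseval}), which produces $K(s)=\frac{\Gamma(s/2)}{2\pi^{s/2}}\prod_{p\leq y}(1-f(p)p^{-s})^{-1}$ before the Euler-product correlation estimates of Section 4 can be run; in the odd case the extra factor $n$ is kept with the theta function and absorbed into this same $g(t)$ analysis rather than built into the proxy, as you suggest. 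None of this is ``routine bookkeeping,'' and your proposal neither supplies it nor identifies that the multiplicative structure underlying Lemma \ref{condition_y} is what fails. (A smaller inaccuracy: describing the proxy as $|B(\chi)|^{2(k-1)}$ with $d_{k-1}$-type coefficients, with the weight $n$ inserted in the odd case, is not what carries the non-integer-$k$ argument; the sharp log power comes from the sum over the shifts $l$ in $R(\chi)$ together with the decay in $|l_1-l_2|$ in Proposition \ref{big_upper}, which you only invoke implicitly by ``granting that ingredient.'')
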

Notice that for an even $\chi$, $\theta(1,\chi)=\sum_{n=1}^{\infty}\chi(n)e^{-\pi n^2/q}$ behaves similarly to $\sum_{n\leq q^{1/2}} \chi(n)$, since the weight $e^{-\pi n^2/q}$ starts decaying roughly at $n\approx q^{1/2}$, therefore it is not surprising that the lower bound has the same shape as Theorem \ref{t1} with $x=q^{1/2}$ (for odd $\chi$ the theta function behaves like $\sum_{n\leq q^{1/2}}n\chi(n)$ so we have an extra $q^k$ term in the lower bound). In fact the proof of Theorem \ref{t2} is essentially the same as that of Theorem \ref{t1}, except for one part of the argument where we need to alter the technicalities of the proof due the fact that the weights $e^{-\pi n^2/q}$ are not multiplicative. We will explain these in the last section. Again, in \cite{munsch2016upper} Munsch and Shparlinski proved Theorem \ref{t2} when $k$ is an integer.

In \cite{szabo2023high}, the author showed the corresponding upper bounds for the $2k$-th moment when $k> 2$ assuming the Generalised Riemann Hypothesis (for any positive integer $q$). Note that showing sharp unconditional upper bounds for these moments when $k$ is large seems hopeless with our current knowledge, since only one `badly behaving' character could mess up the bound. For example, when $x=q^{1/5}$ and $k=10$, say, the Burgess bound, which is currently the best upper bound for single character sums, does not rule out the possibility that there is some $\chi$ mod $q$ for which $\big| \sum_{n\leq x} \chi(n)\big|=\lfloor x\rfloor$, so the contribution from this would be $\frac{1}{\phi(q)}\big|\sum_{n\leq q^{1/5} }\chi(n)\big|^{20}\gg q^3$, which is much bigger than the expected $q^{2+o(1)}$ bound for the moment. This phenomenon appears in other moment inequalities too. For example, we expect the $2k$-th moment of the $\zeta$ function to be 
\begin{equation*}
    \int_{T}^{2T}|\zeta(1/2+it)|^{2k}\asymp_{k} T(\log T)^{k^2},
\end{equation*}
when $T$ is large.
The lower bound part of this is known unconditionally for all real $k\geq 0$ (see \cite{heap2020lower}), whereas the upper bound is only known when $k\leq 4$ unconditionally (see \cite{heap2019sharp}) and for all $k\geq 0$ under the Riemann Hypothesis (see \cite{harper2013sharp}). Also note that the upper bound for arbitrarily large $k$ would imply the Lindelöf hypothesis namely that for any $\epsilon>0$ and $T\leq t \leq 2T$ one has $\zeta(t)\ll T^{\epsilon}$.

Let us go back to discussing the behaviour of 
\begin{equation*}
    \sum_{\substack{\chi \text{ mod }q\\ \chi\neq \chi_0}}\Big|  \sum_{n\leq x} \chi(n)\Big|^{2k}.
\end{equation*}
As described above, when $k\geq 2$ we understand the order of magnitude of this quantity, even if the upper bounds are conditional. We expect the moment to have the same behaviour when $1 \leq k\leq 2$ (this will be explained later after introducing random multiplicative functions), however our methods break down for $1< k<2$. It would certainly be interesting to prove some results (even conditionally on GRH) in this direction. When $0\leq k\leq 1$ the moment is now known to behave differently. In \cite{harper2023typical} Harper shows that if $q$ is a prime and $0\leq k\leq 1$, then 
\begin{equation*}
     \frac{1}{\phi(q)} \sum_{\substack{\chi \text{ mod }q\\ \chi\neq \chi_0}} \Big| \sum_{n\leq x} \chi(n) \Big|^{2k} \ll \bigg( \frac{x}{1+(1-k)\sqrt{\log\log \min(x, q/x)} }\bigg)^k, 
\end{equation*}
and he conjectures that this is indeed the right order of magnitude in this range.

Our proof techniques are heavily influenced by previous work of Harper (\cite{harper2020moments}, \cite{harper2013sharp}, \cite{harper2023typical}), particularly the paper \cite{harper2020moments} in which Harper determines the order of magnitude of high real moments ($k\geq 1$) of the Steinhaus random multiplicative function.  

We now define the Steinhaus random multiplicative function, which is a random function $f(n)$ defined on the integers, and in many aspects is a good model for character sums. Let $(f(p))_{p \text{ prime }}$ be a sequence of independent Steinhaus random variables (i.e. variables which are uniformly distributed on the complex unit circle $\{z: |z|=1\}$), and we make $f(n)$ completely multiplicative, that is if $n=p_1^{a_1}\cdots p_l^{a_l}$, where the $p_i$ are distinct primes, then $f(n)=f(p_1)^{a_1}\cdots f(p_l)^{a_l}$. One may expect that $f(n)$ simulates the behaviour of $\chi(n)$ for an `average $\chi$', since both are completely multiplicative, take values on the unit circle and the values of $\chi$ on primes do not influence each other too much, at least when we take primes up to $q$. In fact, we have
\begin{equation*}
    \mathbb{E}f(n)\overline{f(m)}=\mathbf{1}(n=m),
\end{equation*}
which is similar to the orthogonality relation for characters
\begin{equation*}
    \frac{1}{\phi(q)}\sum_{\chi \text{ mod }q}\chi(n)\bar{\chi}(m)=\mathbf{1}\big(n\equiv m\text{ (mod }q)\big),
\end{equation*}
In fact, if $1\leq n,m<q$, we have $\mathbf{1}\big(n\equiv m\text{ (mod }q)\big)=\mathbf{1}(n=m)$, so these two orthogonality relations have the same form. This will allow us to replace a polynomial expression in the $\chi$ with the corresponding polynomial expression for $f$ as long as the length of the polynomial is less than $q$ (see the beginning of Section 4). Generally, handling expressions involving the random multiplicative function is easier, since $f(p)$ are independent, whereas the $\chi(p)$ are not. Moreover random quantities allow us to directly use tools from probability theory.

Note that biggest discrepancy between these two objects is that a Dirichlet character is periodic modulo its conductor, on the other hand a random multiplicative function is almost surely non-periodic. However, when we consider initial sums $\{\sum_{n\leq x} \chi(n): \chi  \mod  q\}$, where $x\leq q^{1/2}$ is small enough, these sums can be modeled well by $\sum_{n\leq x} f(n)$. In particular, Harper \cite{harper2020moments} showed that for any real $k\geq 1$ and $x\geq 2$
\begin{equation*}
    \mathbb{E}\Big| \sum_{n\leq x} f(n) \Big|^{2k}\asymp_k x^{k}(\log x)^{(k-1)^2},
\end{equation*}
which is the same shape as the first part of our bound in Theorem \ref{t1}. Harper's moment bound therefore also suggests that Theorem \ref{t1} might hold for any real $k\geq 1$. Unfortunately our proof crucially uses $k\geq 2$.

Let us give a quick overview as to how the rest of the paper is organised. In Section 2 we state some results from the literature. In Section 3, assuming $x\leq q^{1/2}$, we define our proxy object $R(\chi)$, which is supposed to resemble the behaviour of $|\sum_{n\leq x} \chi(n)|^{2(k-1)}$. Note that the main idea in this paper is really the choice of $R(\chi)$, which will be much more complicated than the proxy object $A(\chi)$ defined on the first page, and the idea comes from a thorough understanding of the connection between Dirichlet characters and the Steinhaus random multiplicative function. In Section 3 we will also explain the motivation for the choice of $R(\chi)$. After applying Hölder's inequality, one needs a lower bound on the quantity $\frac{1}{\phi(q)} \sum_{\chi\neq \chi_0} \big|\sum_{n\leq x}\chi(n)\big|^2 R(\chi)$ and an upper bound on $\frac{1}{\phi(q)}\sum_{ \chi\neq \chi_0} R(\chi)^{\frac{k}{k-1}}$. These bounds will be proven in Section 4 and 5 respectively. In Section 6 we deal with the case $q^{1/2}\leq x\leq q/2$, in Section 7 we indicate the changes one has to make in the argument to prove Theorem \ref{t2}.

Finally let us mention the paper \cite{de2021small}, in which the authors give a lower bound for low moments ($0\leq 2k\leq 4/3$) of character sums which is sharp up to a small power of $\log q$.
\section{Background results}
\begin{lemma}
\label{eulerproduct}
    Let $f(n)$ be the Steinhaus random multiplicative function.
    Let  $\alpha,\beta,\sigma_1,\sigma_2\geq 0$ and $t_1,t_2$ be real numbers and $100(1+\max(\alpha^2,\beta^2))\leq z<y$. Then
    \begin{equation*}
    \begin{split}
        & \mathbb{E}\prod_{z\leq p\leq y}\Big|1-\frac{f(p)}{p^{1/2+\sigma_1+it_1}}\Big|^{-2\alpha}\Big|1-\frac{f(p)}{p^{1/2+\sigma_2+it_2}}\Big|^{-2\beta} \\
        =&\exp\bigg(\sum_{p\leq y}\frac{\alpha^2}{p^{1+2\sigma_1} }+\frac{\beta^2}{p^{1+2\sigma_2} }+\frac{2\alpha\beta \cos((t_2-t_1)\log p)}{p^{1+\sigma_1+\sigma_2 } }+O\big(\frac{\max(\alpha,\alpha^3,\beta,\beta^3)}{z^{1/2}}\big)\bigg).
    \end{split}
    \end{equation*}
\end{lemma}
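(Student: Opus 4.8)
\emph{Proof strategy.} The plan is a direct Euler-product computation. Since the variables $f(p)$ are independent and the product runs over finitely many primes, the expectation factorizes as $\prod_{z\le p\le y}L_p$, where
\[
L_p=\mathbb{E}\Big[\Big|1-\tfrac{f(p)}{p^{1/2+\sigma_1+it_1}}\Big|^{-2\alpha}\Big|1-\tfrac{f(p)}{p^{1/2+\sigma_2+it_2}}\Big|^{-2\beta}\Big],
\]
so it suffices to evaluate each $L_p$ with enough precision and then sum $\log L_p$. Writing $f(p)=e^{i\theta}$ with $\theta$ uniform on $[0,2\pi)$, and setting $r_j=p^{-1/2-\sigma_j}<1$ and $\psi_j=\theta-t_j\log p$, I would expand each factor by the binomial series $(1-w)^{-\gamma}=\sum_{m\ge0}d_\gamma(m)w^m$ (valid for $|w|<1$, with $d_\gamma(m)=\binom{\gamma+m-1}{m}\ge0$ because $\gamma\ge0$), getting
\[
\Big|1-r_je^{i\psi_j}\Big|^{-2\gamma}=\sum_{m,n\ge0}d_\gamma(m)d_\gamma(n)\,r_j^{m+n}e^{i(m-n)\psi_j}.
\]
Multiplying the $\alpha$- and $\beta$-factors, integrating over $\theta$, and interchanging sum and integral (legitimate by absolute convergence, the series of absolute values multiplying to $(1-r_1)^{-2\alpha}(1-r_2)^{-2\beta}<\infty$), only the terms with $m_1-n_1+m_2-n_2=0$ survive.

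Since the total weight $m_1+n_1+m_2+n_2$ has the same parity as the alternating sum, the surviving terms have even weight; the weight-$0$ term is $1$, and the weight-$2$ terms come exactly from the tuples $(1,1,0,0),(0,0,1,1),(1,0,0,1),(0,1,1,0)$, which (using $d_\gamma(0)=1$, $d_\gamma(1)=\gamma$) contribute together
\[
\frac{\alpha^2}{p^{1+2\sigma_1}}+\frac{\beta^2}{p^{1+2\sigma_2}}+\frac{2\alpha\beta\cos((t_2-t_1)\log p)}{p^{1+\sigma_1+\sigma_2}}.
\]
Thus $L_p=1+(\text{this weight-}2\text{ part})+E_p$, where $E_p$ collects all surviving terms of weight $\ge4$. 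By the triangle inequality $|E_p|$ is at most the analogous sum with every phase replaced by $1$ and the diagonal constraint dropped, which factors; using $d_{2\gamma}(j)\ll_j\max(\gamma,\gamma^j)$ together with $r_j\le z^{-1/2}$ and the hypothesis $z\ge100(1+\max(\alpha^2,\beta^2))$ — which forces the relevant geometric ratios below $\tfrac12$ and keeps auxiliary factors such as $(1-r_2)^{-2\beta}$ bounded — one obtains, term by term, $|E_p|\ll P(\alpha,\beta)/p^{2}$ with $P(\alpha,\beta)$ a finite sum of monomials of the shape $\max(\alpha,\alpha^a)\max(\beta,\beta^b)$. Summing over $p\ge z$, using $\sum_{p\ge z}p^{-2}\ll z^{-1}$ and the fact that the hypothesis gives $z^{1/2}\ge10\max(1,\alpha,\beta)$, each such contribution is $O\big(\max(\alpha,\alpha^3,\beta,\beta^3)/z^{1/2}\big)$.

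Finally, the weight-$2$ part together with $E_p$ is $\ll(\alpha^2+\beta^2)/p+O(p^{-2})$, which is at most $\tfrac12$ under the hypothesis, so $\log L_p$ equals the weight-$2$ part plus $E_p$ plus $O\big((\alpha^2+\beta^2)^2/p^{2}\big)$; since $\sum_{p\ge z}(\alpha^2+\beta^2)^2/p^{2}\ll(\alpha^4+\beta^4)/z\ll\max(\alpha,\alpha^3,\beta,\beta^3)/z^{1/2}$ again by the lower bound on $z$, summing $\log L_p$ over $z\le p\le y$ gives the claimed identity. I expect the only genuine difficulty to be keeping the tail estimate uniform in $\alpha$ and $\beta$: since these may be large one must track the polynomial growth of $d_{2\gamma}(j)$ precisely, and since the stated error must vanish as $\alpha,\beta\to0$ one cannot afford the crude bound $d_{2\gamma}(j)\le(1+2\gamma)^j$ but needs $d_{2\gamma}(j)\ll_j\max(\gamma,\gamma^j)$ — the hypothesis on $z$ is exactly what absorbs the large-parameter losses into the factor $z^{-1/2}$.
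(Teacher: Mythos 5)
Your proposal is correct in substance, but it takes a genuinely different route from the paper: the paper does not reprove this lemma at all — it simply cites Euler Product Result 1 of Harper \cite{harper2020moments}, remarking that the case $\sigma_1\neq\sigma_2$ follows from the same calculation with trivial modifications, and that $t_1\neq 0$ is reduced to Harper's $t_1=0$ case by the distributional substitution $\tilde f(p)=f(p)p^{-it_1}$. Your direct computation (factor the expectation over primes by independence; expand each local factor in the binomial series; integrate out the uniform phase so that only tuples with $m_1-n_1+m_2-n_2=0$ survive; extract the weight-two terms, which give exactly the three main terms; control the even-weight-$\geq 4$ tail using $d_\gamma(m)\ll_m\max(\gamma,\gamma^m)$, the geometric decay forced by $z\geq 100(1+\max(\alpha^2,\beta^2))$, and $\sum_{p\geq z}p^{-2}\ll z^{-1}$ with $z^{1/2}\gg\max(1,\alpha,\beta)$) is essentially the computation underlying Harper's result, so nothing is lost; what it buys is a self-contained proof in which the general shifts $(\sigma_1,t_1),(\sigma_2,t_2)$ are handled natively — in particular the dependence only on $t_2-t_1$ falls out of the phase integration, which is the same symmetry the paper invokes via the substitution — whereas the paper's citation is of course much shorter. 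One caveat: what your argument (and Harper's statement) actually yields is the formula with the main-term sum taken over $z\leq p\leq y$, the same range as the product; the sum over all $p\leq y$ as printed in the lemma cannot be literally correct, since the omitted primes $p<z$ would contribute on the order of $(\alpha+\beta)^2\log\log z$, far exceeding the stated error $O\big(\max(\alpha,\alpha^3,\beta,\beta^3)z^{-1/2}\big)$. This is evidently a typo in the paper (in the application the author separately pays $O(k^2\log\log k)$ to extend the sum to all $p\leq y$), but you should say explicitly that your conclusion is the matching-range version rather than asserting that it "gives the claimed identity" verbatim.
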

\begin{proof}
    This follows from Euler product result 1 in \cite{harper2020moments}, with two small differences. One is that Harper assumes $\sigma_1=\sigma_2$ in the statement, however the same calculation with trivial modifications yields us the lemma. Secondly, Harper assumes $t_1=0$, however $f(p)$ and $f(p)p^{-it_1}$ have the same distribution, so if we substitute $\Tilde{f}(p)=f(p)p^{-it_1}$ for all primes $p$ and then make $\Tilde{f}$ totally multiplicative, then $\Tilde{f}$ is also a Steinhaus random multiplicative function, so we may apply the result to $\Tilde{f}$.
\end{proof}
\newpage
\begin{lemma}
\label{parseval}
    Let $(a_n)_{n\geq 1}$ be a sequence of complex numbers and $F(s)=\sum_{n=1}^{\infty}\frac{a_n}{n^s}$ be the corresponding Dirichlet series. If $\sigma_c$ denotes its abscissa of convergence, then for any $\sigma>\max(0,\sigma_c)$, we have
    \begin{equation*}
        \int_{1}^{\infty} \frac{\big|\sum_{n\leq x}a_n\big|^2 }{x^{1+2\sigma }}dx=\frac{1}{2\pi}\int_{-\infty}^{+\infty}\frac{|F(\sigma+it)|^2}{|\sigma+it|^2}dt.
    \end{equation*}
\end{lemma}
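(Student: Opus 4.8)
The plan is to recognise the claimed formula as an instance of Plancherel's theorem, once the Dirichlet series $F$ has been rewritten as a (Fourier--)Laplace transform of the summatory function $A(x)=\sum_{n\le x}a_n$.

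First I would record the standard integral representation: for $\Re(s)>\max(0,\sigma_c)$ one has
\[
F(s)=s\int_1^\infty \frac{A(x)}{x^{s+1}}\,dx,
\]
which follows from writing $n^{-s}=s\int_n^\infty x^{-s-1}\,dx$ (valid for $\Re(s)>0$), summing over $n$, and interchanging sum and integral. The interchange is legitimate because the theory of Dirichlet series gives $A(x)\ll_{\sigma'} x^{\sigma'}$ for every $\sigma'$ with $\max(0,\sigma_c)<\sigma'<\Re(s)$, so the relevant integrand is dominated by an integrable function and the dominated convergence theorem applies. Dividing by $s$ and substituting $x=e^{u}$ turns this into
\[
\frac{F(\sigma+it)}{\sigma+it}=\int_0^\infty A(e^{u})\,e^{-\sigma u}\,e^{-itu}\,du .
\]

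Now set $g(u)=A(e^{u})e^{-\sigma u}$ for $u\ge 0$ and $g(u)=0$ for $u<0$. Since $\sigma>\max(0,\sigma_c)$, the bound $A(x)\ll_{\sigma'} x^{\sigma'}$ with $\sigma'<\sigma$ shows that $g(u)\ll e^{-(\sigma-\sigma')u}$ decays exponentially, while on $[0,1]$ the step function $A(e^{u})$ is bounded; hence $g\in L^1(\mathbb{R})\cap L^2(\mathbb{R})$, and the displayed identity says precisely that $t\mapsto F(\sigma+it)/(\sigma+it)$ is the Fourier transform $\widehat g$ of $g$. Plancherel's theorem then gives
\[
\int_{-\infty}^\infty |g(u)|^2\,du=\frac{1}{2\pi}\int_{-\infty}^\infty |\widehat g(t)|^2\,dt=\frac{1}{2\pi}\int_{-\infty}^\infty \frac{|F(\sigma+it)|^2}{|\sigma+it|^2}\,dt,
\]
and reversing the substitution $u=\log x$ (so $du=dx/x$) identifies the left-hand side with $\int_1^\infty |A(x)|^2 x^{-1-2\sigma}\,dx$, which is exactly the claim.

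The only step that genuinely requires care is the growth bound $A(x)\ll x^{\sigma'}$ for $\sigma'>\max(0,\sigma_c)$, and hence the $L^1\cap L^2$ membership of $g$ that makes Plancherel applicable; everything else is a change of variables together with the integral representation of $F$. A slightly different packaging, which avoids invoking the partial-sum bound in sharp form, would be to verify the identity first for $\sigma$ large (where absolute convergence makes both sides elementary) and then extend in $\sigma$, but the Fourier-analytic argument above seems to me the most transparent.
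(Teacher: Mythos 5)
Your argument is correct: the representation $F(s)/s=\int_1^\infty A(x)x^{-s-1}\,dx$, the exponential substitution, the bound $A(x)\ll_{\sigma'}x^{\sigma'}$ for $\sigma'>\max(0,\sigma_c)$ guaranteeing $g\in L^1\cap L^2$, and Plancherel together give exactly the stated identity. The paper itself offers no proof—it simply cites Harmonic Analysis Result 1 of Harper, which in turn refers to Theorem 5.4 of Montgomery and Vaughan—and your Plancherel/Mellin argument is essentially the standard proof of that cited theorem, so nothing is missing.
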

\begin{proof}
    This is Harmonic Analysis Result 1 in \cite{harper2020moments}, which refers to Theorem 5.4 of \cite{montgomery2007multiplicative}
\end{proof}
\begin{lemma}
\label{cosine}
Let $t\in \mathbb{R}$, $y\geq 2$. Then
    \begin{equation*}
\sum_{p\leq y} \frac{\cos(t \log p) }{p}\leq 
\begin{cases}
\log\log y+O(1)            & \text{if }  0\leq |t|\leq 1/\log y,        \\
\log(1/t)+O(1)        & \text{if }  1/\log y\leq |t| \leq 10,  \\
 \log\log |t|+O(1) & \text{if }  10\leq |t|.                \\
\end{cases}
\end{equation*}
\end{lemma}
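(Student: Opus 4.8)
The plan is to write $\cos(t\log p)=\operatorname{Re}p^{-it}$, so that $\sum_{p\le y}\frac{\cos(t\log p)}{p}=\operatorname{Re}\sum_{p\le y}p^{-1-it}$, and to compare this truncated prime sum with the value of $\log\zeta$ just to the right of the line $\operatorname{Re}(s)=1$. By symmetry we may assume $t\ge 0$. The first case is immediate from $\cos(t\log p)\le 1$ together with Mertens' theorem $\sum_{p\le y}1/p=\log\log y+O(1)$, so assume from now on that $t\ge 1/\log y$.

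The first step is to replace the sharp cutoff at $y$, up to an error $O(1)$, by a smooth damping factor $p^{-1/\log y}$. Writing $\sigma_0=1/\log y$,
\begin{equation*}
\sum_{p\le y}\frac{1}{p^{1+it}}-\sum_{p}\frac{1}{p^{1+\sigma_0+it}}=\sum_{p\le y}\frac{1-p^{-\sigma_0}}{p^{1+it}}-\sum_{p>y}\frac{1}{p^{1+\sigma_0+it}}.
\end{equation*}
In the first sum on the right one uses $0\le 1-p^{-\sigma_0}\le\sigma_0\log p$ for $p\le y$ together with Mertens' estimate $\sum_{p\le y}\frac{\log p}{p}=\log y+O(1)$ to bound it by $\sigma_0(\log y+O(1))=O(1)$; in the tail one uses Chebyshev's bound $\pi(u)\ll u/\log u$ and partial summation, together with $\log u\ge\log y$ on the range of summation, to bound it by $\ll\frac{1}{\sigma_0\log y}\,y^{-\sigma_0}=O(1)$. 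Next, the Euler product identity $\log\zeta(s)=\sum_p\sum_{m\ge 1}\frac{1}{mp^{ms}}$ gives $\sum_p p^{-s}=\log\zeta(s)+O(1)$ uniformly for $\operatorname{Re}(s)\ge 1$, since the discrepancy $\sum_p\sum_{m\ge 2}\frac{1}{mp^{ms}}$ is there bounded by $\sum_p\frac{1}{p(p-1)}$. Combining these,
\begin{equation*}
\sum_{p\le y}\frac{\cos(t\log p)}{p}=\operatorname{Re}\log\zeta\!\left(1+\tfrac{1}{\log y}+it\right)+O(1)=\log\Big|\zeta\!\left(1+\tfrac{1}{\log y}+it\right)\Big|+O(1).
\end{equation*}

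It now remains to feed in classical pointwise bounds for $\zeta$ on and just to the right of the $1$-line. Since $g(s)=\zeta(s)-\frac{1}{s-1}$ is entire, on the compact box $1\le\operatorname{Re}(s)\le 2$, $|\operatorname{Im}(s)|\le 10$ we have $|\zeta(s)|\le\frac{1}{|s-1|}+O(1)$. Applying this with $s=1+\frac{1}{\log y}+it$, for which $|s-1|\ge\max(\frac{1}{\log y},t)$, gives $|\zeta(s)|\ll\log y$ when $t\le 1/\log y$ (which recovers the first case) and $|\zeta(s)|\ll\frac{1}{t}$ when $1/\log y\le t\le 10$ (here $t\le 10$ is used to absorb the additive constant), whence $\log|\zeta(s)|\le\log(1/t)+O(1)$; this is the second case. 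For $t\ge 10$ one instead uses the standard bound $|\zeta(1+\sigma+it)|\ll\log t$, valid for all $\sigma\ge 0$ and $t\ge 2$, which gives $\log|\zeta(s)|\le\log\log t+O(1)$, the third case.

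The estimate is classical and I do not expect a genuine obstacle; the only points requiring care are that every error term above be uniform in both $t$ and $y$, and that the three regimes match up consistently across the boundaries $t\approx 1/\log y$ and $t\approx 10$ — which is precisely why I would route the whole argument through the single quantity $\log|\zeta(1+\frac{1}{\log y}+it)|$ rather than splitting the prime sum by hand. One can alternatively give a more self-contained proof by partial summation against Mertens' estimate, reducing to the cosine integral $\int_{\log 2}^{\log y}\frac{\cos(tv)}{v}\,dv$, but then the resulting error term must be treated more carefully, so the route through $\zeta$ seems cleanest.
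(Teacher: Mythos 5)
Your proof is correct: the reduction of $\sum_{p\le y}\cos(t\log p)/p$ to $\log\bigl|\zeta\bigl(1+\tfrac{1}{\log y}+it\bigr)\bigr|+O(1)$ via the smoothing $p^{-1/\log y}$, Mertens, and the Chebyshev tail bound is sound, and the three pointwise bounds on $\zeta$ you invoke are classical and applied in the correct ranges (the only nitpick is that the expansion $\sum_p p^{-s}=\log\zeta(s)+O(1)$ should be asserted for $\Re(s)>1$, which is all you use, rather than $\Re(s)\ge 1$). The paper itself gives no proof, simply citing Lemma 2.9 of Munsch's paper on shifted moments, and your argument is essentially the standard proof of that cited lemma, so there is nothing to compare beyond that.
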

\begin{proof}
    This is Lemma 2.9. in \cite{munsch2017shifted}.
\end{proof}
\begin{lemma}
\label{evenmoment}
    Let $a_n$ and $c_n$ be two complex sequences. Let $\mathcal{P}$ be a finite set of primes and define $\Tilde{d}(n)=\sum_{d|n} \mathbf{1}(p|d \implies p\in \mathcal{P})$. For any integer $j\geq 0$ we have
    \begin{equation*}
        \mathbb{E}\Big| \sum_{n\leq x} c_nf(n)\Big|^2 \Big|\sum_{p\in \mathcal{P} }\frac{a_pf(p)}{p^{1/2}}+\frac{a_{p^2} f(p^2)}{p}\Big|^{2j}\ll \Big( \sum_{n\leq x}\Tilde{d}(n)|c_n|^2\Big) \cdot (j!)\cdot \Big(2\cdot \sum_{p\in \mathcal{P}}\frac{|a_p|^2}{p}+\frac{6|a_{p^2} |^2}{p^2} \Big)^j
    \end{equation*}
\end{lemma}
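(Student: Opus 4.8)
The plan is to pass to the level of Dirichlet polynomials and exploit the orthogonality relation $\mathbb{E}f(a)\overline{f(b)}=\mathbf{1}(a=b)$. Since $f$ is completely multiplicative and $|f(p)|=1$ one has $f(p^2)=f(p)^2$, so writing $D:=\sum_{p\in\mathcal{P}}\big(\frac{a_pf(p)}{p^{1/2}}+\frac{a_{p^2}f(p)^2}{p}\big)$ I would first expand $D^{j}=\sum_{M}\gamma_M f(M)$ by the multinomial theorem (the sum over positive integers $M$ with all prime factors in $\mathcal{P}$), where
$$\gamma_M=\frac{1}{M^{1/2}}\sum_{\substack{(r_p,s_p)_{p\in\mathcal{P}}\\ r_p+2s_p=v_p(M)\ \forall p,\ \sum_p(r_p+s_p)=j}}\frac{j!}{\prod_p r_p!\,s_p!}\prod_{p\in\mathcal{P}}a_p^{r_p}a_{p^2}^{s_p}.$$
Then $\big(\sum_{n\le x}c_nf(n)\big)D^{j}=\sum_N c_N^{*}f(N)$ with $c_N^{*}=\sum_{nM=N,\,n\le x,\,p\mid M\Rightarrow p\in\mathcal{P}}c_n\gamma_M$, and by orthogonality the left side of the lemma equals $\sum_N|c_N^{*}|^{2}$.

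Next I would produce the divisor weight. Since $N$ has at most $\tilde{d}(N)$ representations $N=nM$ with $M$ having all prime factors in $\mathcal{P}$, Cauchy--Schwarz gives $|c_N^{*}|^{2}\le\tilde{d}(N)\sum_{nM=N}|c_n|^{2}|\gamma_M|^{2}$; summing over $N$, interchanging sums, and using that the $\mathcal{P}$-part of $nM$ is $M$ times the $\mathcal{P}$-part of $n$ together with submultiplicativity of the divisor function ($\tilde{d}(nM)\le\tilde{d}(n)d(M)$), one gets
$$\sum_N|c_N^{*}|^{2}\le\Big(\sum_{n\le x}\tilde{d}(n)|c_n|^{2}\Big)\sum_{M}d(M)\,|\gamma_M|^{2},$$
which reduces everything to the bound $\sum_{M}d(M)|\gamma_M|^{2}\le j!\,(2A_1+6A_2)^{j}$, where $A_1=\sum_p|a_p|^{2}/p$ and $A_2=\sum_p|a_{p^2}|^{2}/p^{2}$.

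For that last bound I would use the elementary inequality $m+1\le 2^{r}3^{s}$ (valid when $r+2s=m$, since $2^{r}\ge r+1$ and $3^{s}\ge 2s+1$) term by term in the formula for $\gamma_M$; together with $d(M)=\prod_p(v_p(M)+1)$ this shows $d(M)|\gamma_M|\le\tilde{\gamma}_M$, the coefficient of $f(M)$ in $\tilde{D}^{j}$ where $\tilde{D}:=\sum_{p\in\mathcal{P}}\big(\frac{2|a_p|f(p)}{p^{1/2}}+\frac{3|a_{p^2}|f(p)^2}{p}\big)$; similarly $|\gamma_M|\le\hat{\gamma}_M$, the coefficient of $f(M)$ in $\hat{D}^{j}$ with $\hat{D}:=\sum_{p\in\mathcal{P}}\big(\frac{|a_p|f(p)}{p^{1/2}}+\frac{|a_{p^2}|f(p)^2}{p}\big)$. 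Since $\tilde{\gamma}_M,\hat{\gamma}_M\ge0$, orthogonality and Cauchy--Schwarz give
$$\sum_{M}d(M)|\gamma_M|^{2}\le\sum_{M}\tilde{\gamma}_M\hat{\gamma}_M=\mathbb{E}\big[\tilde{D}^{j}\,\overline{\hat{D}^{j}}\big]\le\big(\mathbb{E}|\tilde{D}|^{2j}\big)^{1/2}\big(\mathbb{E}|\hat{D}|^{2j}\big)^{1/2}.$$
At this point I would invoke the Khintchine-type inequality $\mathbb{E}\big|\sum_p(u_pf(p)+v_pf(p)^2)\big|^{2j}\le j!\,\big(\sum_p(u_p^{2}+v_p^{2})\big)^{j}$, valid for real $u_p,v_p\ge0$; applied to $\tilde{D}$ and $\hat{D}$ it gives $\mathbb{E}|\tilde{D}|^{2j}\le j!(4A_1+9A_2)^{j}$ and $\mathbb{E}|\hat{D}|^{2j}\le j!(A_1+A_2)^{j}$, so the product of the square roots is $j!\big((4A_1+9A_2)(A_1+A_2)\big)^{j/2}\le j!(2A_1+6A_2)^{j}$, the last inequality because $(4A_1+9A_2)(A_1+A_2)=4A_1^2+13A_1A_2+9A_2^2\le(2A_1+6A_2)^2$ for $A_1,A_2\ge0$.

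The main obstacle will be proving the Khintchine-type inequality. Writing $W=\sum_pZ_p$ with $Z_p=u_pf(p)+v_pf(p)^2$ (independent, mean zero) and expanding $\mathbb{E}|W|^{2j}=\sum_{\mathbf{k},\mathbf{k}'}\binom{j}{\mathbf{k}}\binom{j}{\mathbf{k}'}\prod_p\mathbb{E}[Z_p^{k_p}\overline{Z_p}^{\,k'_p}]$ over multi-indices with $\sum_pk_p=\sum_pk'_p=j$, the point of the hypothesis $u_p,v_p\ge0$ is that then every $\mathbb{E}[Z_p^{k_p}\overline{Z_p}^{\,k'_p}]$ is a nonnegative real (an explicit sum of monomials in $u_p,v_p$), so the problem becomes a purely combinatorial comparison with $j!\big(\sum_p(u_p^2+v_p^2)\big)^{j}=\sum_{\mathbf{k}}\frac{(j!)^2}{\prod_pk_p!}\prod_p(u_p^2+v_p^2)^{k_p}$. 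The delicate issue is that the $f(p)^2$ terms create genuine off-diagonal contributions ($\mathbf{k}\ne\mathbf{k}'$), and one must show these are dominated without destroying the factor $j!$; this is a careful but elementary accounting, which I would first verify in the cases $j=1,2$ (where it works out, with the diagonal $\mathbf{k}=\mathbf{k}'$ dominating) before treating the general case.
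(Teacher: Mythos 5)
Your reduction steps are sound and, in spirit, they follow the route one would actually take (note that the paper itself does not prove this lemma at all: it simply quotes Lemma 1 of Harper \cite{harper2023typical}, observing that the length restriction needed there for character orthogonality is automatic for the Steinhaus function). Expanding $\big(\sum_{n\le x}c_nf(n)\big)D^j$ and using $\mathbb{E}f(N_1)\overline{f(N_2)}=\mathbf{1}(N_1=N_2)$, the Cauchy--Schwarz step $|c_N^*|^2\le\tilde d(N)\sum_{nM=N}|c_n|^2|\gamma_M|^2$, the submultiplicativity $\tilde d(nM)\le\tilde d(n)d(M)$, and the elementary inequality $r+2s+1\le 2^r3^s$ are all correct. (A small remark: it is slightly more efficient to split $\sqrt{d(M)}\le\prod_p 2^{r_p/2}3^{s_p/2}$ inside each representation, which reduces matters to $\mathbb{E}|D^*|^{2j}$ for the single polynomial $D^*=\sum_p\big(\sqrt2|a_p|f(p)p^{-1/2}+\sqrt3|a_{p^2}|f(p)^2p^{-1}\big)$ and leaves a factor-$2$ of slack in the $A_2$-coefficient, rather than Cauchy--Schwarzing between $\tilde D$ and $\hat D$.)

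The genuine gap is the step you yourself flag as ``the main obstacle'': the Khintchine-type bound $\mathbb{E}\big|\sum_p(u_pf(p)+v_pf(p)^2)\big|^{2j}\le j!\big(\sum_p(u_p^2+v_p^2)\big)^j$ is asserted, tested only for $j=1,2$, and not proved, yet it is the entire content of the lemma once the reductions are done --- it is essentially the combinatorial accounting in Harper's proof that the paper is citing. Worse, your pipeline needs it with the sharp constant: since the bound sits inside a $j$-th power, the conclusion only tolerates an absolute constant outside, not a $C^j$ loss; and your final comparison $(4A_1+9A_2)(A_1+A_2)\le(2A_1+6A_2)^2$ has no slack in the $A_1^2$ coefficient (take $A_2=0$), so any version of the Khintchine inequality with an extra factor $C^j$, $C>1$, breaks the argument. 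The difficulty is exactly the off-diagonal terms you mention: for $Z_p=u_pf(p)+v_pf(p)^2$ one has, e.g., $\mathbb{E}\big[Z_p^2\overline{Z_p}\big]=u_p^2v_p\neq0$, so genuinely new nonnegative contributions with $\mathbf{k}\neq\mathbf{k}'$ appear and must be absorbed into the slack of the diagonal terms (where $\mathbb{E}|Z_p|^{2m}\le m!\,(u_p^2+v_p^2)^m$ with room to spare for $m\ge2$). Spot checks suggest the inequality is true, but until this accounting is carried out in general (or the constants are rearranged, e.g.\ via the $\sqrt{d(M)}$ trick above, so that a lossier per-$p^2$ bound suffices), the proof is incomplete at its decisive point.
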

\begin{proof}
    This is essentially Lemma 1 in \cite{harper2023typical}, however there is a slight difference we need to point out. There the lemma is stated using characters, i.e. the author upper bounds 
    \begin{equation*}
        \frac{1}{\phi(q)}\sum_{\chi\, \text{mod}\, q}\Big| \sum_{n\leq x} c_n\chi(n)\Big|^2 \Big|\sum_{p\in \mathcal{P} }\frac{a_p\chi(p)}{p^{1/2}}+\frac{a_{p^2} \chi(p^2)}{p}\Big|^{2j}
    \end{equation*}
under the condition $x(\max_{p\in \mathcal{P}} p)^{2j}<q$. This condition ensures that the Dirichlet polynomial $ \sum_{n\leq x} c_n\chi(n)\cdot \big( \sum_{p\in \mathcal{P} }\frac{a_p\chi(p)}{p^{1/2}}+\frac{a_{p^2} \chi(p^2)}{p}\big)^{j}$ has length less than $q$ so when applying the orthogonality relation, only the diagonal terms survive. In our case we have this by default, since $\mathbb{E}f(n)\overline{f(m)}=\mathbf{1}(n=m)$, so the same argument can be run.
\end{proof}
\section{Choosing the proxy object}

Let $x\leq q^{1/2}$.
In this section we define our proxy object $R(\chi)$ for each $\chi$ mod $q$, then apply Hölder's inequality and state the two bounds we need to show in order to prove Theorem 1 for $x\leq q^{1/2}$. The proof of these bounds will be given in the next two sections.

Firstly, let us give a heuristic explanation as to how we choose $R(\chi)$. As discussed in the introduction, we want Hölder's inequality to be close to optimal, so $R(\chi)$ should behave like $\big|\sum_{n\leq x}\chi(n)\big|^{2(k-1)}$, however we cannot simply choose $R(\chi)=\big|\sum_{n\leq x}\chi(n)\big|^{2(k-1)}$ as we would not be able to bound the arising quantities (unless $x$ is very small in terms of $q$). To proceed, we model $\big|\sum_{n\leq x}\chi(n)\big|^{2(k-1)}$ by $\big|\sum_{n\leq x}f(n)\big|^{2(k-1)}$, since we expect $f$ to behave like an `average' character $\chi$.

In fact, the main motivation comes from the work of Harper \cite{harper2020moments} on high moments of random multiplicative functions. Roughly speaking, he shows that (see equation (1.2) in \cite{harper2020moments})
\begin{equation*}
\begin{split}
    \mathbb{E}\big| \sum_{n\leq x} f(n)\big|^{2(k-1)} & \approx \frac{x^{k-1}}{(\log x)^{k-1} }  \mathbb{E}\Big( \int_{-1/2}^{1/2} \prod_{p\leq x} \Big|1-\frac{f(p)}{p^{1/2+it} }\Big|^{-2}dt\Big)^{k-1} \\
    & \approx  \frac{x^{k-1}}{(\log x)^{k-1} }  \mathbb{E}\Big( \frac{1}{\log x} \sum_{|l|\leq \log x/2} \prod_{p\leq x} \Big|1-\frac{f(p)}{p^{1/2+il/\log x} }\Big|^{-2} \Big)^{k-1},
\end{split}
\end{equation*}
where in the second line we discretized the integral noting that the Euler product does not change by much if we shift the imaginary part by $\frac{1}{\log x}$. As $k-1 \geq 1$ (i.e. we are in the high moments case), Harper observes (and later proves) that the main contribution to the expectation comes from a small (bounded) number of $l$-s for which $\prod_{p\leq x} \big|1-\frac{f(p)}{p^{1/2+il/\log x} }\big|^{-2}$ is large. Thus, roughly speaking we can write
\begin{equation*}
    \mathbb{E}\Big( \sum_{|l|\leq \log x/2} \prod_{p\leq x} \Big|1-\frac{f(p)}{p^{1/2+il/\log x} }\Big|^{-2} \Big)^{k-1}\approx \sum_{|l|\leq \log x/2} \mathbb{E} \prod_{p\leq x} \Big|1-\frac{f(p)}{p^{1/2+il/\log x} }\Big|^{-2(k-1)}.
\end{equation*}

In order to be able to handle the arising quantities, we will not take our Euler product up to $x$, but rather $y$, where $y$ is a small but fixed power of $x$, so our proxy object $R(\chi)$ will be something like
\begin{equation*}
     \sum_{|l|\leq \log y/2}\prod_{p\leq y} \Big|1-\frac{\chi(p)}{p^{1/2+il/\log y} }\Big|^{-2(k-1)}.
\end{equation*}
Unfortunately, working directly with this Euler product with our current knowledge is technically impossible (unless $y$ is very small compared to $q$), so we approximate it by a suitable Dirichlet polynomial in a way which has now become standard in bounding moments of $L$-functions.
 By taking the first two terms in the Taylor expansion of $\log\big(1-\frac{\chi(p)}{p^{1/2+it}}\big)$ and noting that the higher order terms contribute a factor of $O_k(1)$, we get
\begin{equation*}
    \prod_{p\leq y}\bigg|1-\frac{\chi(p)}{p^{1/2+it} }\bigg|^{-2(k-1)}\asymp_{k}  \exp\Big( 2(k-1)\Re \sum_{p\leq y}\frac{\chi(p)}{p^{1/2+it}}+\frac{\chi(p)^2}{2p^{1+2it}}\Big). 
\end{equation*}
In order for us to use the orthogonality relation for Dirichlet characters, we need to replace this exponential by a suitable short Dirichlet polynomial, whose length will be a small but fixed power of $q$. To do this we use the method developed by Harper in \cite{harper2013sharp}. Here the author partitions the interval $[1,y]$ as $1=y_0<y_1<\ldots <y_M=y$ for suitable parameters $y_m$, then for each interval $[y_{m-1}, y_m]$ one uses a truncated Taylor expansion 
\begin{equation*}
\exp\Big( 2(k-1)\Re \sum_{y_{m-1}<p\leq y_m}\frac{\chi(p)}{p^{1/2+it}}+\frac{\chi(p)^2}{2p^{1+2it}}\Big)\approx\bigg( \sum_{j=0}^{J_m} \frac{(k-1)^j}{j!}\Big( \Re \sum_{y_{m-1}<p\leq y_m}\frac{\chi(p)}{p^{1/2+it}}+\frac{\chi(p)^2}{2p^{1+2it}}\Big)^j\bigg)^2,
\end{equation*}
where the $J_m$ are suitable truncation parameters.

Let us now give the proper definition of $R(\chi)$.
We assume $x\leq q^{1/2}$. Let $y=x^{\frac{1}{C_0}}$, where $C_0$ is a sufficiently large absolute constant chosen at the end of the argument, which will depend on $k$. 

We define the subdivision $1=y_0<y_1<\ldots <y_M=y$ of the interval $[1,y]$ recursively by taking $y_M=y$, for any $2\leq m\leq M$ we have $y_{m-1}:=y_m^{1/20}$ and choose $M$ so that $y_1$ lies in $\big[y^{\frac{1}{20(\log \log y)^2}}, y^{\frac{1}{(\log \log y)^2}}\big]$.

We next define the truncation parameters $J_m$ for $1\leq m\leq M$. Let $J_1=(\log \log y)^{3/2}$, $J_M=\frac{C_0}{10^5k}$, and if $2\leq m\leq M-1$, we define $J_m=J_M+M-m$. Before we go on it is important to note the inequality
\begin{equation}
\label{shortpolynomial}
    \prod_{m=1}^M y_m^{10^4kJ_m}<x,
\end{equation}
which is an easy consequence of our definition. Our preliminary assumption on $C_0$ is that it is large enough that we have $J_M\geq \exp(10^4 k^2)$.

For any $\chi$ mod $q$, $1\leq m\leq M$ and $|l|\leq (\log y)/2$ integers we denote the Dirichlet polynomial
\begin{equation*}
    D_{m,l}(\chi):= \sum_{y_{m-1}< p\leq y_m } \frac{\chi(p)}{p^{1/2+il/\log y}} +\frac{\chi(p)^2}{2p^{1+2il/\log y}}. 
\end{equation*}
We define the truncated exponential as
\begin{equation*}
    R_{m,l}(\chi):= \bigg(\sum_{j=0 }^{J_m} \frac{(k-1)^j}{j!}\big(\Re D_{m,l}(\chi)\big)^j\bigg)^2,
\end{equation*}
and let our proxy object be
\begin{equation*}
     R(\chi):= \sum_{|l|\leq (\log y)/2} \prod_{m=1}^M  R_{m,l}(\chi).
\end{equation*}
The definition of $R(\chi)$ written out in full is
\begin{equation*}
    R(\chi)= \sum_{|l|\leq (\log y)/2} \prod_{m=1}^M  \bigg(\sum_{j=0}^{ J_m} \frac{(k-1)^j}{j!}\bigg(\Re \sum_{y_{m-1}< p\leq y_m } \frac{\chi(p)}{p^{1/2+il/\log y}} +\frac{\chi(p)^2}{2p^{1+2il/\log y}} \bigg)^j\bigg)^2. 
\end{equation*}
Now that we have defined our proxy objects we apply Hölder's inequality with exponents $k$ and $\frac{k}{k-1}$ to get
\begin{equation}
\label{main_Hölder}
    \bigg(\frac{1}{\phi(q)}\sum_{\substack{\chi \text{ mod }q\\ \chi\neq \chi_0}}\bigg|\sum_{n\leq x} \chi(n)\bigg|^{2k}\bigg)^{1/k} \bigg( \frac{1}{\phi(q)}\sum_{\substack{\chi \text{ mod }q\\ \chi\neq \chi_0}} R(\chi)^{\frac{k}{k-1}}\bigg)^{(k-1)/k}\geq \frac{1}{\phi(q)} \sum_{\substack{\chi \text{ mod }q\\ \chi\neq \chi_0}} \bigg|\sum_{n\leq x}\chi(n)\bigg|^2 R(\chi).
\end{equation}
As for the lower bound part we will show the following.
\begin{proposition}
\label{p1}
There exists a bounded threshold (in terms of $k$), such that if $C_0$ exceeds that threshold we have 
\begin{equation*}
    \frac{1}{\phi(q)} \sum_{\substack{\chi \text{ mod }q\\ \chi\neq \chi_0}} \bigg|\sum_{n\leq x}\chi(n)\bigg|^2 R(\chi)\gg_k x(\log y)^{k^2-1}.
\end{equation*}
\end{proposition}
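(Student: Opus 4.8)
\textbf{Proof plan for Proposition \ref{p1}.}

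The plan is to expand the product over $m$ and the sum over $l$, and then isolate the diagonal contribution after applying orthogonality of the Dirichlet characters. First I would write $R(\chi) = \sum_{|l|\le (\log y)/2} \prod_{m=1}^M R_{m,l}(\chi)$ and observe that each $R_{m,l}(\chi)$ is a Dirichlet polynomial in $\chi$ of length at most $y_m^{2J_m}$ (coming from squaring the degree-$J_m$ polynomial built from primes in $(y_{m-1},y_m]$). By \eqref{shortpolynomial}, the total length of $\big|\sum_{n\le x}\chi(n)\big|^2 \prod_m R_{m,l}(\chi)$ is less than $x\cdot\prod_m y_m^{10^4 k J_m} < x^2 \le q$, so when I expand everything into a Dirichlet series in $\chi$ and average over non-trivial $\chi$, the orthogonality relation $\frac{1}{\phi(q)}\sum_{\chi \bmod q}\chi(n)\bar\chi(m) = \mathbf{1}(n\equiv m \bmod q)$ collapses (since the moduli are $<q$) to $\mathbf{1}(n=m)$; subtracting off the $\chi_0$ term costs only $O(x/q)\cdot(\text{length})$ which is negligible. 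Thus the character average equals the corresponding expectation over the Steinhaus random multiplicative function, up to a tiny error: $\frac{1}{\phi(q)}\sum_{\chi\ne\chi_0}\big|\sum_{n\le x}\chi(n)\big|^2 R(\chi) = \mathbb{E}\big|\sum_{n\le x}f(n)\big|^2 \tilde R(f) + o(\cdots)$, where $\tilde R(f)$ is $R$ with $\chi$ replaced by $f$.

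Having passed to the random model, the heart of the argument is to show $\mathbb{E}\big|\sum_{n\le x}f(n)\big|^2 \tilde R(f) \gg_k x(\log y)^{k^2-1}$. Here I would reverse the heuristic from Section 3: the truncated exponential $R_{m,l}(f)$ should be replaced, at the cost of a constant factor, by the genuine Euler product $\prod_{y_{m-1}<p\le y_m}\big|1-f(p)p^{-1/2-il/\log y}\big|^{-2(k-1)}$ — but only for a single well-chosen value of $l$, say $l=0$ (keeping just one term in the sum over $l$ only loses a factor, since all terms are nonnegative). The cleanest route is: keep only $l=0$; then $\tilde R(f)\ge \prod_{m=1}^M R_{m,0}(f)$, and I want $\mathbb{E}\big|\sum_{n\le x}f(n)\big|^2\prod_m R_{m,0}(f) \gg_k x(\log y)^{k^2-1}$. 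To handle $\big|\sum_{n\le x}f(n)\big|^2$ I would use Parseval (Lemma \ref{parseval}) to write it as (a constant times) $\frac{x}{\log x}\int \prod_{p\le x}|1-f(p)p^{-1/2-it}|^{-2}\,dt$ over a short range of $t$, or more simply lower-bound $\mathbb{E}|\sum_{n\le x}f(n)|^2 \prod_m R_{m,0}(f)$ by restricting $\sum_{n\le x}f(n)$ to a sub-sum over $y$-smooth numbers (or $n\le x$ with all prime factors $>y$) to decouple it from $\prod_m R_{m,0}(f)$, and then use the independence of $f(p)$ across primes to factor the expectation as $\big(\text{something}\ge \mathbb{E}|\sum_{n\le x}f(n)|^2 \approx x\big)$ times $\prod_m \mathbb{E}R_{m,0}(f)$. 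The latter, via Lemma \ref{eulerproduct} with $\alpha=\beta=k-1$, $\sigma_1=\sigma_2=0$, $t_1=t_2=0$, contributes $\exp\big((k-1)^2\sum_{y_{m-1}<p\le y_m}1/p\big)$, and multiplying over $m$ telescopes to $\exp\big((k-1)^2\sum_{p\le y}1/p\big)\asymp (\log y)^{(k-1)^2}$. Combined with the extra $\log y$ factors I expect to extract from the off-diagonal structure (the $|\sum_{n\le x}f(n)|^2$ piece contributes $x$ but the interplay with the product must be arranged to give the full exponent $k^2-1 = (k-1)^2 + 2(k-1)$), this yields the claimed bound.

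The main obstacle, and the place where the argument is genuinely delicate, is establishing the lower bound $\mathbb{E}|\sum_{n\le x}f(n)|^2 \prod_m R_{m,0}(f) \gg_k x(\log y)^{k^2-1}$ with the correct power of $\log y$, because one must simultaneously (i) control the fact that $R_{m,0}(f)$ is a \emph{truncated} exponential rather than the true one — this requires showing the truncation parameters $J_m$ are large enough that $R_{m,0}(f)$ is a good pointwise or $L^1$ lower bound for the Euler factor on the relevant event, which is exactly why the recursive choice $y_{m-1}=y_m^{1/20}$ and $J_m = J_M + M - m$ were made (shorter intervals near the bottom need fewer Taylor terms); and (ii) correctly account for the correlation between $\sum_{n\le x}f(n)$ and the product $\prod_m R_{m,0}(f)$, which is what produces the extra $(\log y)^{2(k-1)}$ beyond the naive $(\log y)^{(k-1)^2}$. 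I would handle (i) by a standard Taylor-truncation lemma (in the spirit of Harper \cite{harper2013sharp}): on the event that $|\Re D_{m,0}(f)|$ is not too large — which holds with probability close to $1$ and on which the contribution is dominant — the truncated sum $\sum_{j\le J_m}\frac{(k-1)^j}{j!}(\Re D_{m,0}(f))^j$ is within a constant factor of $\exp((k-1)\Re D_{m,0}(f))$, and the tail event contributes negligibly by a crude moment bound using Lemma \ref{evenmoment}. For (ii) I would insert the identity $1 = \prod_{p\le y}|1-f(p)p^{-1/2}|^{2(k-1)}\cdot\prod_{p\le y}|1-f(p)p^{-1/2}|^{-2(k-1)}$-type manipulation, or more robustly, condition on $(f(p))_{p\le y}$ and lower-bound the conditional expectation of $|\sum_{n\le x}f(n)|^2$ by $x$ times a quantity that, combined with $\prod_m R_{m,0}(f)$, integrates to the right size; this is the step that mirrors Harper's identification of the "main contribution comes from a bounded number of $l$" phenomenon, and it is where I expect to spend the most care.
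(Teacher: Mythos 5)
Your first step (adding back $\chi_0$, using that the polynomial has length $<q$, and passing by orthogonality to $\mathbb{E}\big|\sum_{n\le x}f(n)\big|^2 R(f)$) matches the paper, and your point (i) about controlling the truncation error via moment bounds is in the spirit of what the paper actually does. But there is a genuine gap at the heart of your plan: you propose to ``keep only $l=0$'' and prove $\mathbb{E}\big|\sum_{n\le x}f(n)\big|^2\prod_m R_{m,0}(f)\gg_k x(\log y)^{k^2-1}$. That inequality is false: the left-hand side is of order $x(\log y)^{k^2-2}$, a full factor of $\log y$ too small. Indeed, after conditioning on $(f(p))_{p\le y}$ one has, essentially, $\mathbb{E}^{(y)}\big|\sum_{n\le x}f(n)\big|^2\gg \frac{x}{\log y}\int_{-1/2}^{1/2}|F_y(1/2+\beta+it)|^2\,dt$ (this $\frac{1}{\log y}$ is the sieve density of $y$-rough numbers and is unavoidable), and by the Euler product correlation estimate $\mathbb{E}|F_y(1/2+\beta+it)|^{2}|F_y(1/2)|^{2(k-1)}$ is $\asymp(\log y)^{k^2}$ only for $|t|\ll 1/\log y$ and decays like $|t|^{-2(k-1)}$ beyond, so the $t$-integral is $\asymp(\log y)^{k^2-1}$ and the single-shift expectation is $\asymp \frac{x}{\log y}(\log y)^{k^2-1}=x(\log y)^{k^2-2}$. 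No rearrangement of the correlation between $\sum_{n\le x}f(n)$ and the single Euler product at $1/2$ can recover the missing $\log y$; this is precisely why the proxy $R(\chi)$ is a sum over the $\asymp\log y$ shifts $l$, and why the paper proves its lower bound (Proposition \ref{p3}) for the \emph{sum} over $l$: each shift contributes $\approx x(\log y)^{k^2-2}$ and the factor $\log y$ is recovered only by summing over all of them.

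Your fallback suggestion for (ii) — restricting $\sum_{n\le x}f(n)$ to $y$-rough $n$ so that the expectation factors as $\big(\#\{n\le x:\ P^-(n)>y\}\big)\cdot\prod_m\mathbb{E}R_{m,0}(f)$ — is even weaker: it yields $\asymp\frac{x}{\log y}\cdot(\log y)^{(k-1)^2}=x(\log y)^{k^2-2k}$, since full decoupling destroys exactly the correlation (between the smooth part of $\sum_{n\le x}f(n)$ and the Euler products) that produces the extra $(\log y)^{2(k-1)}$. The correct mechanism, as in the paper, is: write $n=ab$ with $a$ smooth and $b$ rough, use a sieve bound for the rough counts, apply Parseval with a Rankin shift $\beta\asymp 1/\log y$ to convert the smooth partial sums into $\int|F_y(1/2+\beta+it)|^2dt$, correlate this with $|F_y(1/2+il/\log y)|^{2(k-1)}$ via Lemma \ref{eulerproduct} and Lemma \ref{cosine}, and only then sum over all $|l|\le(\log y)/2$. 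Without reinstating the sum over $l$ (or an equivalent average over shifts), your plan cannot reach the exponent $k^2-1$.
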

For the upper bound we prove the following.
\begin{proposition}
\label{p2}
   There exists a bounded threshold (in terms of $k$), such that if $C_0$ exceeds that threshold we have 
    \begin{equation*}
        \frac{1}{\phi(q)}\sum_{\substack{\chi \text{ mod }q\\ \chi\neq \chi_0}} R(\chi)^{\frac{k}{k-1}}\ll_k (\log y)^{k^2+1}
    \end{equation*}
\end{proposition}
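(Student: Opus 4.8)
The plan uses $k\ge 2$ in two essential places: the conjugate exponent $\gamma:=\frac{k}{k-1}$ then lies in $(1,2]$, so $\gamma-1=\frac1{k-1}\le 1$ and sub-additivity of $t\mapsto t^{\gamma-1}$ is available; and $2(k-1)\ge 2>1$, which will make a certain series over shifts converge. First I would write, for each $\chi$,
\begin{equation*}
R(\chi)^{\gamma}=R(\chi)\cdot R(\chi)^{\gamma-1}\le R(\chi)\sum_{|l'|\le(\log y)/2}\prod_{m=1}^{M}R_{m,l'}(\chi)^{\gamma-1}=\sum_{|l|,|l'|\le(\log y)/2}\ \prod_{m=1}^{M}R_{m,l}(\chi)\,R_{m,l'}(\chi)^{\gamma-1},
\end{equation*}
applying $\big(\sum_{l'} a_{l'}\big)^{\gamma-1}\le\sum_{l'} a_{l'}^{\gamma-1}$ (valid for $a_{l'}\ge 0$ since $\gamma-1\le 1$) to the middle factor. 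Since $R(\chi_0)^{\gamma}\ge 0$, it suffices to bound $\frac1{\phi(q)}\sum_{\chi\bmod q}$ of the right side. Using sub-additivity here, rather than the cruder $\big(\sum_{l'}a_{l'}\big)^{\gamma}\le(\#\{l'\})^{\gamma-1}\sum_{l'} a_{l'}^{\gamma}$, is what saves the factor $(\log y)^{\gamma-1}$ that Theorem~\ref{t1} cannot spare: in the double sum the diagonal $l=l'$ will already contribute the full $(\log y)^{k^2+1}$, and the off-diagonal will be summable in $|l-l'|$.

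The crux is to replace the non-integer power $R_{m,l'}(\chi)^{\gamma-1}$ by a genuine short Dirichlet polynomial. Since $(k-1)(\gamma-1)=1$, this quantity should behave like $\prod_{y_{m-1}<p\le y_m}\big|1-\chi(p)p^{-1/2-il'/\log y}\big|^{-2}$, i.e.\ like a truncated exponential of $\Re D_{m,l'}(\chi)$ with parameter $1$ in place of $k-1$. I would set $J_m':=\lceil J_m/(k-1)\rceil$, take
\begin{equation*}
S_{m,l'}(\chi)\ :=\ \bigg(\sum_{j=0}^{J_m'}\frac{\big(\Re D_{m,l'}(\chi)\big)^{j}}{j!}\bigg)^{2}\ +\ (\text{a polynomial correction of degree }O(J_m')\text{ in }\Re D_{m,l'}(\chi)),
\end{equation*}
and establish a pointwise bound $R_{m,l'}(\chi)^{\gamma-1}\ll S_{m,l'}(\chi)$ by a dichotomy on the real number $\Re D_{m,l'}(\chi)$: where it is small compared with $J_m$, both truncated series agree with the true exponentials up to a factor $1+O((3/5)^{J_m})$ and $R_{m,l'}(\chi)^{\gamma-1}\approx e^{2(k-1)(\gamma-1)\Re D_{m,l'}(\chi)}=e^{2\Re D_{m,l'}(\chi)}\approx S_{m,l'}(\chi)$; where it is large the series are controlled by their top terms, whose leading constants $(k-1)^{J_m}/J_m!$ and $1/J_m'!$ match after Stirling's formula (again because $(\gamma-1)(k-1)=1$), the correction term absorbing the rare intermediate range. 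The increasing truncation parameters $J_m=J_M+M-m$ are designed precisely so that the per-interval errors are geometric in $J_m$ and sum, rather than multiply, to $O_k(1)$ once $C_0$ is large. By \eqref{shortpolynomial}, $\prod_m R_{m,l}(\chi)S_{m,l'}(\chi)$ is then a Dirichlet polynomial in $\chi$ and $\bar\chi$ all of whose frequencies are $\le\prod_m y_m^{O(J_m)}<x\le q^{1/2}<q$. \emph{This step is where I expect the real work to be}: constructing $S_{m,l'}$, proving the majorization uniformly in $\chi$, and keeping the bookkeeping of errors additive in $m$.

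Given this, the orthogonality relation in the shape $\frac1{\phi(q)}\sum_{\chi\bmod q}\chi(a)\bar\chi(b)=\mathbf 1(a=b)=\mathbb E\big(f(a)\overline{f(b)}\big)$ for $a,b<q$ lets me pass to the Steinhaus model, and the disjointness of the prime ranges $(y_{m-1},y_m]$ factors the expectation:
\begin{equation*}
\frac1{\phi(q)}\sum_{\chi\bmod q}\ \prod_{m=1}^{M}R_{m,l}(\chi)S_{m,l'}(\chi)\ =\ \prod_{m=1}^{M}\mathbb E\big[R_{m,l}(f)\,S_{m,l'}(f)\big].
\end{equation*}
Each factor I would compare with the Euler product over $(y_{m-1},y_m]$: the truncation is harmless since $J_1=(\log\log y)^{3/2}$ and $J_m\ge J_M=C_0/(10^5k)$ are large (the truncation tails, and the contribution of the correction terms, handled by Lemma~\ref{evenmoment}); the error in replacing $\log(1-w)$ by its first two Taylor terms is $O_k\!\big(\sum_{y_{m-1}<p\le y_m}p^{-3/2}\big)$; and for $m=1$ only, the $O_k(1)$ primes below the threshold $100(1+\max((k-1)^2,1))$ of Lemma~\ref{eulerproduct} are peeled off first. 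All these errors sum to $O_k(1)$ over $m$, and Lemma~\ref{eulerproduct} (with $\alpha=k-1$, $\beta=1$, $t_1=l/\log y$, $t_2=l'/\log y$) gives
\begin{equation*}
\mathbb E\big[R_{m,l}(f)\,S_{m,l'}(f)\big]\ \ll_k\ \exp\bigg(\sum_{y_{m-1}<p\le y_m}\frac{(k-1)^{2}+1+2(k-1)\cos\!\big((l-l')\tfrac{\log p}{\log y}\big)}{p}\ +\ \varepsilon_m\bigg),\qquad\sum_m\varepsilon_m=O_k(1).
\end{equation*}
Multiplying over $m$ telescopes the prime sum to $\sum_{p\le y}$, and since $(k-1)^2+1+2(k-1)=k^2$, Lemma~\ref{cosine} applied with $t=(l-l')/\log y$ (which lies in the middle range $1/\log y\le|t|\le 1\le 10$ when $l\ne l'$) yields $\ll_k(\log y)^{k^2}$ on the diagonal and $\ll_k(\log y)^{k^2}|l-l'|^{-2(k-1)}$ off it. Summing over $|l|,|l'|\le(\log y)/2$, the diagonal contributes $\ll_k(\log y)\cdot(\log y)^{k^2}=(\log y)^{k^2+1}$ and the off-diagonal contributes $\ll_k(\log y)\cdot(\log y)^{k^2}\sum_{d\ge1}d^{-2(k-1)}\ll_k(\log y)^{k^2+1}$, the last series converging because $2(k-1)\ge 2$. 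Choosing $C_0$ large enough for the truncation and length constraints used above then gives the proposition. After the reduction to expectations of truncated exponentials against Euler products everything is the by-now-standard moment-bounding analysis; the only genuinely new point is the construction of the majorants $S_{m,l'}$ in the second paragraph.
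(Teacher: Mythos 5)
Your outer skeleton is the same as the paper's: the sub-additivity of $t\mapsto t^{1/(k-1)}$ (valid since $k\ge 2$) to get the double sum over shifts $l,l'$, the plan to replace the fractional power $R_{m,l'}(\chi)^{1/(k-1)}$ by a short Dirichlet polynomial, the passage to the Steinhaus model by orthogonality (the length check via \eqref{shortpolynomial}), the factorization over the disjoint prime blocks $(y_{m-1},y_m]$, Lemma \ref{eulerproduct} with $\alpha=k-1,\beta=1$ combined with Lemma \ref{cosine} to get $(\log y)^{k^2}/(|l-l'|^{2(k-1)}+1)$, and the final summation over shifts using $2(k-1)\ge 2$. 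The step you defer --- constructing a single polynomial $S_{m,l'}$ with $R_{m,l'}(\chi)^{1/(k-1)}\ll S_{m,l'}(\chi)$ \emph{uniformly in} $\chi$ --- is precisely where the paper's proof has its content, and the paper does not do this: it instead partitions the characters into classes $\mathcal{X}(n_1,\ldots,n_M)$ according to the dyadic size $W_m$ of $|\Re D_{m,l_2}(\chi)|$, uses a class-dependent majorant $U_{m,l_2}$ (the parameter-$1$ truncated exponential when $n_m=0$, and a penalty of the shape $e^{4W_m}|D_{m,l_2}(\chi)W_m^{-1}|^{a_m}$ with the much higher degree $a_m\asymp kJ_m$ when $n_m\ge 1$), extends each class sum to all characters by positivity, and recoups the loss through the factor $(\inf I^{(m)}_{n_m}+1)^{-2}$, which is summable over classes (Lemmas \ref{easy}, \ref{zerocase}, \ref{nonzerocase} and Proposition \ref{big_upper}). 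So as written your proposal is an outline whose central lemma is unproven, and it is resolved in the paper by a genuinely different device.

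Two concrete points your sketch would have to overcome if you pursue the uniform-majorant route. First, the dichotomy ``small versus large $\Re D_{m,l'}$'' is not enough: for $\Re D_{m,l'}$ \emph{negative} of size comparable to $J_m'$, the parameter-$1$ truncated exponential can be anomalously small (its square nearly vanishes near real zeros of the partial sum), while $R_{m,l'}(\chi)^{1/(k-1)}$ can still be as large as $e^{2|\Re D_{m,l'}(\chi)|}$, up to $e^{2J_m'}$; and for $|\Re D_{m,l'}|$ beyond $J_m'$ there is no upper cutoff at all (values up to $\approx y_m^{1/2}$ occur). So the ``correction'' cannot be just a Stirling-matched top term of degree $\approx 2J_m/(k-1)$; it must also dominate $e^{2|\Re D|}$ throughout a window $|\Re D|\asymp J_m'$, i.e.\ carry a coefficient of size $e^{cJ_m'}$ against a high power of $|D_{m,l'}|/J_m'$, and you must then verify that its expectation against $R_{m,l_1}(f)$ (note $\mathbb{E}R_{m,l_1}(f)^2$ is as large as $e^{O(k^2\log\log y)}$ when $m=1$) is still negligible --- this is exactly the role of the paper's penalty terms, done classwise. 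Second, since the number of blocks $M$ tends to infinity with $y$, any fixed constant loss per block in your pointwise majorization multiplies up to an unbounded factor $(\log\log y)^{O(1)}$, which Proposition \ref{p2} cannot absorb (it would degrade Theorem \ref{t1}); so in the typical range the majorization constant must be $1+O(e^{-cJ_m})$, with every genuine constant loss pushed into penalty terms of super-exponentially small expectation, and your additive-error bookkeeping should be checked against the fact that the per-block main terms are only of size $\exp\big(\sum_{I_m}((k-2)^2+o(1))/p\big)$, essentially $1$ when $k=2$. I believe a uniform majorant can be built along these lines, but none of this is in your proposal, and it is the heart of the proposition.
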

Thus, for some bounded $C_0$, these two propositions together with \eqref{main_Hölder} imply 
\begin{equation*}
    \frac{1}{\phi(q)}\sum_{\substack{\chi \text{ mod }q\\ \chi\neq \chi_0}}\bigg|\sum_{n\leq x} \chi(n)\bigg|^{2k}\gg_k x(\log y)^{(k-1)^2},
\end{equation*}
which proves the first part of Theorem \ref{t1}, since $\frac{\log x}{\log y}=C_0\ll 1$.

Before moving on, we define the corresponding quantities for the random multiplicative function $f(n)$. We take
\begin{equation*}
    D_{m,l}(f)=\sum_{y_{m-1}< p\leq y_m } \frac{f(p)}{p^{1/2+il/\log y}} +\frac{f(p)^2}{2p^{1+2il/\log y}},
\end{equation*}
and define $R_{m,l}(f)$ and $R(f)$ the same way we defined $R_{m,l}(\chi)$ and $R(\chi)$. 
\section{The lower bound}
This section is devoted to the proof of Proposition \ref{p1}. We first go through the main steps of the argument.
As discussed in the introduction, one wants to work with random quantities instead of characters whenever possible.
\newpage
Hence our first step is to show that Proposition \ref{p1} follows from 
\begin{equation*}
    \mathbb{E}\bigg|\sum_{n\leq x} f(n)\bigg|^2 R(f)\gg_k x(\log y)^{k^2-1},
\end{equation*}
in other words we replace the averaging over characters by taking the expectation of the corresponding quantity involving the random multiplicative function. This step is relatively straightforward and it really depends on the fact that $R(\chi)$ is a sum of Dirichlet polynomials whose lengths are (much) shorter than $q$.

 Next, recall that
\begin{equation*}
    D_{m,l}(f)=\sum_{y_{m-1}< p\leq y_m } \frac{f(p)}{p^{1/2+il/\log y}} +\frac{f(p)^2}{2p^{1+2il/\log y}},
\end{equation*}
 and note that 
\begin{equation*}
    R(f)=\sum_{|l|\leq \log y/2} \prod_{m=1}^M \bigg(\sum_{j=0}^{J_m}\frac{(k-1)^j}{j!} \big(\Re D_{m,l}(f)\big)^j\bigg)^2
\end{equation*}
is a sum of truncated exponentials. Therefore we will replace $R(f)$ by the actual sum of exponentials
\begin{equation*}
    E(f):=\sum_{|l|\leq \log y/2}\exp\bigg(2(k-1)\Re\sum_{p\leq y}\frac{f(p)}{p^{1/2+il/\log y}} +\frac{f(p)^2}{2p^{1+2il/\log y}} \bigg),
\end{equation*}
and show that
\begin{equation*}
     \mathbb{E}\bigg|\sum_{n\leq x} f(n)\bigg|^2 E(f)\gg x(\log y)^{k^2-1}.
\end{equation*}

This is Proposition \ref{p3}, which is stated and proved later in this section.
Finally we need to check that in the process of replacing $R(f)$ by $E(f)$ the resulting error is small enough. This is probably the most involved part of the argument and will be proved in Proposition \ref{p4}. We note that even though the proof involves some lengthy calculations (mostly because the quantities we handle have `long' definitions), the argument is of elementary nature and we do not use any heavy machinery.

Now let us start the argument. As promised, the first step is to replace
\begin{equation*}
    \frac{1}{\phi(q)} \sum_{\chi \in \mathcal{X}_q^*} \bigg|\sum_{n\leq x}\chi(n)\bigg|^2 R(\chi)
\end{equation*}
with the corresponding quantity involving the Steinhaus random multiplicative function. In order to do this we need to to add $\big|\sum_{n\leq x}\chi_0(n)\big|^2R(\chi_0)$ to the expression so we can use orthogonality. We use the crude bound
\begin{equation*}
    D_{m,l}(\chi_0)=\sum_{y_{m-1}< p\leq y_m } \frac{\chi_0(p)}{p^{1/2+il/\log y}} +\frac{\chi_0(p)^2}{2p^{1+2il/\log y}}\leq y_m,
\end{equation*}
moreover by definition for all $1\leq m\leq M$ $y_m$ is larger then $100J_m$, say, therefore 
\begin{equation*}
    R_{m,l}(\chi_0)=\bigg(\sum_{j=0 }^{J_m} \frac{(k-1)^j}{j!}\big(\Re D_{m,l}(\chi_0)\big)^j\bigg)^2\leq (ky_m)^{2J_m}.
\end{equation*}
Hence by \eqref{shortpolynomial}, we have the bound
\begin{equation*}
    \Big|\sum_{n\leq x}\chi_0(n)\Big|^2R(\chi_0)\ll x^2(\log y)\prod_{m=1}^M (ky_m)^{2J_m} <x^{2+1/10},
\end{equation*}
so the contribution from the trivial character is negligible as $\frac{x^{2+1/10}}{\phi(q)}\leq x^{1/5}$, say (we assumed $x\leq q^{1/2}$). Therefore it is enough to show that
\begin{equation*}
   \frac{1}{\phi(q)} \sum_{\chi \text{ mod }q} \bigg|\sum_{n\leq x}\chi(n)\bigg|^2 R(\chi)\gg_k x(\log y)^{k^2-1}.
\end{equation*}
For any $\chi$ mod $q$, by expanding the brackets in the definition of $R(\chi)$ we may write
\begin{equation}
\label{expand}
    \bigg|\sum_{n\leq x}\chi(n)\bigg|^2 R(\chi)=\sum_{|l|\leq (\log y)/2}\sum_{\substack{n_1,n_2\leq N \\ m_1,m_2\leq x}} a_{n_1,n_2}\frac{\chi(m_1n_1)\bar{\chi}(m_2n_2)}{n_1^{1/2+il/\log y}n_2^{1/2-il/\log y} },
\end{equation}
for some complex coefficients $a_{n_1,n_2}$, and most importantly
\begin{equation*}
    xN\leq x \prod_{m=1}^M y_m^{4J_m}\leq x^{3/2}<q.
\end{equation*}
So when we average \eqref{expand} over $\chi$ mod $q$ the only non-zero contribution comes from when $m_1n_1=m_2n_2$ (diagonal terms), thus the orthogonality relation gives us
\begin{equation*}
    \frac{1}{\phi(q)}\sum_{\chi\, \text{mod}\, q}\bigg|\sum_{n\leq x}\chi(n)\bigg|^2 R(\chi)=\mathbb{E}\sum_{|l|\leq \log y/2}\sum_{\substack{n_1,n_2\leq N \\ m_1,m_2\leq x}} a_{n_1,n_2}\frac{f(m_1n_1)\bar{f}(m_2n_2)}{n_1^{1/2+il/\log y}n_2^{1/2-il/\log y} }=\mathbb{E}\bigg|\sum_{n\leq x} f(n)\bigg|^2 R(f),
\end{equation*}
so it is enough to show
\begin{equation*}
    \mathbb{E}\bigg|\sum_{n\leq x} f(n)\bigg|^2 R(f)\gg_k x(\log y)^{k^2-1}.
\end{equation*}
In order to prove this, we need the following two propositions. 
\begin{proposition}
\label{p3}
We have
\begin{equation*}
  \sum_{|l|\leq \log y/2} \mathbb{E}  \bigg|\sum_{n\leq x}f(n)\bigg|^2\exp\bigg(2(k-1)\Re\sum_{p\leq y}\frac{f(p)}{p^{1/2+il/\log y}} +\frac{f(p)^2}{2p^{1+2il/\log y}} \bigg)\geq e^{O(k^2\log \log k)} x(\log y)^{k^2-1}
\end{equation*}
\end{proposition}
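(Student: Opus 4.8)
\textbf{Proof proposal for Proposition \ref{p3}.}

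The plan is to lower bound the sum by keeping only a single term, namely $l=0$, since all the other terms are non-negative; so it suffices to show
\begin{equation*}
    \mathbb{E}\bigg|\sum_{n\leq x}f(n)\bigg|^2\exp\bigg(2(k-1)\Re\sum_{p\leq y}\frac{f(p)}{p^{1/2}}+\frac{f(p)^2}{2p}\bigg)\geq e^{O(k^2\log\log k)}x(\log y)^{k^2-1}.
\end{equation*}
The standard way (going back to Rudnick--Soundararajan and Harper) to produce such a lower bound is to first replace the truncated sum $\sum_{n\le x}f(n)$ by a weighted version that makes the harmonic analysis cleaner, or alternatively to relate it to the integral average of an Euler product via Lemma \ref{parseval}. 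Concretely, I would introduce the Dirichlet series $F(s)=\sum_{n}f(n)n^{-s}$ and use the Parseval identity in Lemma \ref{parseval} together with a suitable smoothing: the idea is that $\big|\sum_{n\le x}f(n)\big|^2$ is, on average over dyadic scales, comparable to $x\cdot\frac{1}{\log x}\int_{-1/2}^{1/2}\big|F(1/2+1/\log x+it)\big|^2\,dt$, and on the short interval $|t|\le 1/2$ the Euler product $\prod_{p\le x}|1-f(p)p^{-1/2-it}|^{-2}$ does not vary much. This is exactly the heuristic displayed in Section 3; the task is to make the lower bound direction rigorous.

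The cleanest route: let $W$ be a smooth nonnegative weight supported on $[1,2]$ (say) and consider $S:=\sum_n f(n)W(n/x)$, so that $\big|\sum_{n\le x}f(n)\big|^2$ dominates $|S|^2$ up to choosing $x$ slightly larger — more precisely one shows $\mathbb{E}\,|\sum_{n\le x'}f(n)|^2 G(f)\gg \mathbb{E}\,|S|^2 G(f)$ for a suitable $x'\asymp x$ and any nonnegative $G(f)$ depending only on primes up to $y<x$, by a positivity/averaging argument (this is a routine manipulation, of the kind used in \cite{harper2020moments}). Then $\mathbb{E}|S|^2 \exp(\cdots)$ is expanded: writing the exponential's Euler product as $\prod_{p\le y}\mathcal{E}_p(f(p))$ with $\mathcal{E}_p\ge 0$, and $|S|^2=\sum_{n_1,n_2}f(n_1)\overline{f(n_2)}W(n_1/x)W(n_2/x)$, independence of the $f(p)$ across primes lets us compute the expectation prime by prime. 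The contribution is concentrated on $n_1,n_2$ that are $y$-smooth times an equal coprime part, and after the dust settles one gets a main term of the shape $x\prod_{p\le y}\big(1+\frac{(k-1)^2}{p}+\cdots\big)\asymp x(\log y)^{(k-1)^2}$ from the ``diagonal up to smooth part'' — but this is not yet $(\log y)^{k^2-1}$.

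The extra $(\log y)^{2k-2}$ must come from the interaction between the $f(n_1)\overline{f(n_2)}$ and the Euler product: the short-interval integral/discretization heuristic shows the true size is $x(\log y)^{-(k-1)}\cdot(\log y)^{(k-1)+( k-1)^2}$-type bookkeeping, and the honest computation is to use Lemma \ref{eulerproduct} with $\alpha=k-1$, $\beta=$ (something of size $1$ coming from the $n$-sum), $t_1=l/\log y$, $t_2$ ranging, together with Lemma \ref{cosine} to control $\sum_p \cos(t\log p)/p$. Specifically I would pass to the integral representation $\mathbb{E}|S|^2\exp(\cdots)\asymp \frac{x}{\log x}\int \mathbb{E}\big[\,|F(1/2+c/\log x+it)|^2 e^{2(k-1)\Re\sum_{p\le y}f(p)p^{-1/2}}\,\big]\,\frac{dt}{|1/2+it|^2}$, expand $|F|^2=\prod_p|1-f(p)p^{-1/2-it}|^{-2}$, and apply Lemma \ref{eulerproduct} with the three characters $p^{-1/2-it}$ (twice, as $\alpha=\beta=1$) and $p^{-1/2}$ (with weight $k-1$), getting an integrand $\asymp \exp\big(\sum_{p\le y}\frac{2}{p}+\frac{(k-1)^2}{p}+\frac{2(k-1)\cos(t\log p)}{p}+\frac{2(k-1)\cos(t\log p)}{p}+\cdots\big)$. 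By Lemma \ref{cosine}, for $|t|\le 1/\log y$ this is $\asymp (\log y)^{2+(k-1)^2+2(k-1)}=(\log y)^{(k+1)^2-2(k-1)-1}$... one tracks the exponent carefully to land on $k^2-1$ after accounting for the $\log x/\log y=C_0$ factors and the length of the $t$-range; the constant degrades by $e^{O(k^2\log\log k)}$ because bounding the Gamma-type factors and the tails $|t|>1/\log y$ costs such a factor (as in \cite{harper2020moments}).

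The main obstacle is the lower-bound direction of the harmonic-analytic step: upper bounds via Parseval are immediate, but to get a genuine \emph{lower} bound on $\mathbb{E}|\sum_{n\le x}f(n)|^2 E_0(f)$ one cannot simply drop terms in an $L^2$ identity. The fix is the smooth-weight comparison plus the observation that $E_0(f)\ge 0$ depends only on $p\le y$, so one can condition on $(f(p))_{p\le y}$ and apply a second-moment lower bound to the conditional law of $\sum_{n\le x}f(n)$ — this is where one really uses $y$ being a small power of $x$ (there is ``enough randomness'' left in the primes between $y$ and $x$). I expect this conditioning argument, together with keeping careful track of how the constant depends on $k$ through the truncation in Lemma \ref{eulerproduct} (the $O(\max(\alpha,\alpha^3,\ldots)/z^{1/2})$ error forces $z$ to be large in terms of $k$, which is harmless since $y_1$ is a large power), to be the technical heart; the bookkeeping of exponents of $\log y$ is lengthy but routine.
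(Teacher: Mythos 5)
There is a genuine gap, and it occurs at the very first step: you discard all shifts except $l=0$ and claim it suffices to prove $\mathbb{E}\big|\sum_{n\leq x}f(n)\big|^2\exp\big(2(k-1)\Re\sum_{p\leq y}(f(p)p^{-1/2}+f(p)^2/(2p))\big)\geq e^{O(k^2\log\log k)}x(\log y)^{k^2-1}$. That single-shift inequality is false: the quantity on the left has order $x(\log y)^{k^2-2}$, a full factor of $\log y$ too small, and $e^{O(k^2\log\log k)}$ is bounded in $k$ so it cannot absorb this loss. To see why, follow your own outline: conditioning on $(f(p))_{p\leq y}$ and applying the sieve/Parseval lower bound turns the left side into roughly $\frac{x}{\log y}\int_{-1/2}^{1/2}\mathbb{E}\,|F_y(1/2+\beta+it)|^2|F_y(1/2)|^{2(k-1)}\,dt$, where $F_y(s)=\prod_{p\leq y}|1-f(p)p^{-s}|^{-1}$. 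By Lemma \ref{eulerproduct} and Lemma \ref{cosine}, the integrand is $\asymp_k(\log y)^{k^2}$ only on the window $|t|\lesssim 1/\log y$, and decays like $(\log y)^{1+(k-1)^2}|t|^{-2(k-1)}$ outside it, so the whole $t$-integral is $\asymp_k(\log y)^{k^2-1}$ and the single-shift expectation is $\asymp_k x(\log y)^{k^2-2}$. No amount of care in the subsequent harmonic analysis can recover the missing $\log y$.

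The missing idea is precisely the point of the sum over $l$ in the proxy object: each of the $\asymp\log y$ shifts $l$ contributes its own main term, coming from the window $|t-l/\log y|\leq 1/\log y$ in the Parseval integral, of size $\asymp\frac{x}{\log y}(\log y)^{k^2-1}$ (times $C^{1-2k}$ after the Rankin shift $\beta=C/\log y$), and summing over all $|l|\leq(\log y)/2$ restores the factor $\log y$ and gives $x(\log y)^{k^2-1}$; the subtracted Rankin error is controlled for each $l$ by Lemma \ref{cosine} and carries the factor $x^{-\beta/4}\leq e^{-C/4}$, so choosing $C$ a large multiple of $k^2\log\log k$ makes the difference positive. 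Apart from this, your structural plan (condition on primes up to $y$, use the remaining randomness in primes in $(y,x]$ via a sieve to lower bound $\mathbb{E}^{(y)}|\sum_{n\leq x}f(n)|^2$, pass to Euler products via Lemma \ref{parseval} with a Rankin trick, then apply Lemma \ref{eulerproduct} — with $\alpha=1$, $\beta=k-1$, not $\alpha=\beta=1$ plus an extra factor as you wrote — and Lemma \ref{cosine}) is essentially the paper's argument; also note the smooth-weight comparison you invoke is not needed and, as stated, is not justified for lower bounds. Keep the full $l$-sum and track each window separately, and the proof closes.
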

\begin{proposition}
\label{p4}
Let us denote the `error'
\begin{equation*}
    \text{Err}_{m,l}(f):=\exp\Big(2(k-1)\Re D_{m,l}(f)\Big)-R_{m,l}(f)=\sum_{\substack{j_1,j_2\geq 0\\ \max(j_1,j_2)>J_m }} \frac{(k-1)^{j_1+j_2}}{j_1!j_2!}(\Re D_{m,l}(f))^{j_1+j_2}.
\end{equation*}
For any $1\leq m\leq M$, we have
\begin{equation*}
    \sum_{|l|\leq \log y/2} \mathbb{E}  \bigg|\sum_{n\leq x}f(n)\bigg|^2\exp\Big(2(k-1)\sum_{\substack{m'=1\\ m'\neq m} }^M\Re D_{m',l}(f)\Big) |\text{Err}_{m,l}(f)|\leq e^{O(k^4)}e^{-J_m} \frac{\log x}{\log y}x(\log y)^{k^2-1} 
\end{equation*}
\end{proposition}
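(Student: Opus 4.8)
The plan is to expand everything into Dirichlet polynomials and reduce to an application of Lemma \ref{evenmoment} with $\mathcal{P}$ the set of primes in $(y_{m-1},y_m]$. First I would write $|\text{Err}_{m,l}(f)|\leq \sum_{\substack{j_1,j_2\geq 0\\\max(j_1,j_2)>J_m}}\frac{(k-1)^{j_1+j_2}}{j_1!j_2!}|\Re D_{m,l}(f)|^{j_1+j_2}$, and bound $|\Re D_{m,l}(f)|\leq |D_{m,l}(f)|$. Since every factor $\exp(2(k-1)\Re D_{m',l}(f))$ for $m'\neq m$ is a nonnegative function of the primes in $(y_{m'-1},y_{m'}]$, and these prime blocks are disjoint across $m'$, I can take expectations block by block: after conditioning on all $f(p)$ with $p\notin(y_{m-1},y_m]$, Lemma \ref{evenmoment} applied to the inner $f(p)$ ($p\in\mathcal P$) with $c_n f(n)$ replaced by $\big(\sum_{n\leq x}f(n)\big)\cdot\prod_{m'\neq m}(\cdots)$ — no wait, cleaner: apply Lemma \ref{evenmoment} directly with the full product of the other exponentials folded into the "$c_n$" sum, treating the $\Re D_{m,l}(f)$ as the $\big|\sum_{p\in\mathcal P}\tfrac{a_pf(p)}{p^{1/2}}+\tfrac{a_{p^2}f(p^2)}{p}\big|$ piece with $a_p=p^{-il/\log y}$, $a_{p^2}=\tfrac12 p^{-2il/\log y}$. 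The point is that the "$\sum_{n\leq x}c_nf(n)$" in the lemma can itself be a product of a short polynomial in the other prime blocks with the plain sum $\sum_{n\leq x}f(n)$, provided we also expand the other exponentials' truncations — but since we want a clean bound, I would instead first expand the $m'\neq m$ exponentials as genuine exponentials and note that $\mathbb{E}[\,\cdot\,]$ factorizes over disjoint prime sets, reducing to bounding $\mathbb{E}\big|\sum_{n\leq x}f(n)\big|^2|\text{Err}_{m,l}(f)|^{?}$ times a product of Euler-product expectations from Lemma \ref{eulerproduct}.

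Concretely, the key steps in order: (1) Use independence across prime blocks to write the expectation as $\mathbb{E}\Big[\big(\mathbb{E}_{m'\text{-blocks}}\prod_{m'\neq m}\exp(2(k-1)\Re D_{m',l}(f))\big)\cdot(\text{stuff in }\mathcal P\text{-block and }\sum_{n\leq x}f(n))\Big]$ — actually the sum $\sum_{n\leq x}f(n)$ couples all blocks, so instead I condition on the $\mathcal P$-block variables last: $\mathbb{E}|\sum_{n\leq x}f(n)|^2 \prod_{m'\neq m}\exp(2(k-1)\Re D_{m',l})\cdot|\text{Err}_{m,l}|$, and expand $|\sum_{n\leq x}f(n)|^2\prod_{m'\neq m}(\cdots)$ — hmm, the exponentials aren't polynomials. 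The honest route: bound $\prod_{m'\neq m}\exp(2(k-1)\Re D_{m',l}(f))\leq \prod_{m'\neq m}\prod_{y_{m'-1}<p\leq y_{m'}}|1-f(p)p^{-1/2-il/\log y}|^{-2(k-1)}\cdot e^{O_k(1)}$, then apply Cauchy–Schwarz in the $\mathcal P$-block to split off $|\text{Err}_{m,l}(f)|$: we get $\big(\mathbb{E}|\sum_{n\leq x}f(n)|^2\prod_{m'\neq m}(\cdots)\big)^{1/2}$-type terms times $\big(\mathbb{E}|\text{Err}_{m,l}(f)|^2\big)^{1/2}$ — but that loses the crucial $\big|\sum_{n\leq x}f(n)\big|^2$ weight structure.

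So the better plan, and I believe the one intended: apply Lemma \ref{evenmoment} with $c_n$ chosen so that $\sum_{n\leq x}c_nf(n)=\big(\sum_{n\leq x}f(n)\big)\cdot\prod_{m'\neq m}\big(\text{truncated-exp polynomial }R_{m',l}^{1/2}\text{-type}\big)$ — i.e. keep the other blocks as their truncated polynomials (legitimate since $R_{m',l}(f)\leq \exp(2(k-1)\Re D_{m',l}(f))+|\text{Err}|$, but for an upper bound on the LHS of Proposition \ref{p4} we want to go the other way). The clean resolution: since $\exp(2(k-1)\sum_{m'\neq m}\Re D_{m',l})$ equals a product of things we control via Lemma \ref{eulerproduct} and is independent of the $\mathcal P$-block, condition on the complement of $\mathcal P$: the conditional expectation over $\mathcal P$-variables of $|\sum_{n\leq x}f(n)|^2|\text{Err}_{m,l}(f)|$ is, by Lemma \ref{evenmoment} applied term-by-term in the $j_1+j_2$ expansion (each term being $|\sum_{n\leq x}f(n)|^2|\Re D_{m,l}|^{j_1+j_2}$, so $j=\lceil(j_1+j_2)/2\rceil$-ish), at most $\sum_{j_1,j_2:\max>J_m}\frac{(k-1)^{j_1+j_2}}{j_1!j_2!}\cdot\big(\sum_{n\leq x}\tilde d(n)|c_n(\text{other blocks})|^2\big)\cdot((j_1+j_2)!)\cdot\big(2\sum_{p\in\mathcal P}\frac1p+\frac{6}{4p^2}\big)^{(j_1+j_2)}$, where $\sum_{p\in\mathcal P}1/p\leq \log\frac{\log y_m}{\log y_{m-1}}+o(1)=\log 20+o(1)$ by Mertens. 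The factorials: $\frac{(j_1+j_2)!}{j_1!j_2!}=\binom{j_1+j_2}{j_1}\leq 2^{j_1+j_2}$, so the double sum becomes $\sum_{J\geq ?}\frac{(2(k-1)\cdot 2\log 20)^{J}}{1}\cdot(\#\{(j_1,j_2):j_1+j_2=J,\max>J_m\})$ — for the sum to converge and produce $e^{-J_m}$ decay I need the geometric ratio $\lambda:=4(k-1)\log 20$... that does NOT converge. The fix must use that we only need $\max(j_1,j_2)>J_m$ so at least $J_m$ of the factors are "forced", combined with $J_m$ being huge ($J_M\geq e^{10^4k^2}$): write $\frac{(k-1)^{j_1+j_2}}{j_1!j_2!}((j_1+j_2)!)(C)^{j_1+j_2}$ and note $\frac{j!}{(j/2)!(j/2)!}\approx 2^j/\sqrt j$, and crucially if $j_1>J_m$ then $\frac{(k-1)^{j_1}}{j_1!}\leq \frac{(k-1)^{j_1}}{J_m!\,(J_m)^{j_1-J_m}}\leq \big(\frac{e(k-1)}{J_m}\big)^{j_1}$, tiny.

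Thus the core step I would execute: bound $|\text{Err}_{m,l}(f)|\leq \exp(2(k-1)|\Re D_{m,l}(f)|)\cdot\mathbf 1(\text{one index}>J_m)$ crudely, or better split $|\text{Err}_{m,l}|\leq \sum_{j_1+j_2=J}\cdots$ and use $\frac1{j!}\leq \frac{1}{J_m!}\cdot\frac{J_m!}{j!}$ with Stirling so that $\sum_{j>J_m}\frac{(2(k-1))^j}{j!}\cdot(\text{poly factors})\leq \frac{(2(k-1))^{J_m}}{J_m!}e^{2(k-1)}\cdot(\text{poly})\leq e^{-J_m}$ using $J_m!\geq (J_m/e)^{J_m}$ and $J_m\gg_k 1$. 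Then the remaining "poly" expectation is $\mathbb{E}|\sum_{n\leq x}f(n)|^2\exp(2(k-1)\sum_{m'\neq m}\Re D_{m',l})\cdot\exp(2(k-1)|\Re D_{m,l}|)$, which by the same mean-value philosophy (Lemma \ref{evenmoment} / the computation that proves Proposition \ref{p3}) is $\ll e^{O(k^4)}\cdot x\cdot\prod_{p\leq y}\exp\big(\frac{2(k-1)^2+2(k-1)^2\cos(\cdots)}{p}\big)$, and summing the resulting Euler products over $|l|\leq\log y/2$ via Lemma \ref{cosine} gives $e^{O(k^2)}(\log y)^{k^2-1}\cdot\frac{\log x}{\log y}$ (the factor $\frac{\log x}{\log y}$ appearing since we extended $\sum_{n\leq x}$ but only have the shorter product up to $y$, exactly as in the heuristic $\mathbb E|\sum_{n\leq x}f|^2\approx\frac{x}{\log x}\int\prod_{p\leq x}$). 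Combining, $\sum_l(\cdots)\leq e^{-J_m}\cdot e^{O(k^4)}\cdot\frac{\log x}{\log y}x(\log y)^{k^2-1}$, as claimed.

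The main obstacle, and where I would spend the most care, is making the Cauchy–Schwarz / term-by-term bound interact correctly with the weight $\big|\sum_{n\leq x}f(n)\big|^2$: one cannot simply pull it out, and one cannot afford to Cauchy–Schwarz away from it, so the argument must apply Lemma \ref{evenmoment} (or its proof's mechanism) with the weight $\sum_{n\leq x}f(n)$ playing the role of $\sum_{n\leq x}c_nf(n)$ while the $m$-block polynomial plays the role of $\sum_{p\in\mathcal P}a_pf(p)/\sqrt p+a_{p^2}f(p^2)/p$ — and the exponentials over the \emph{other} blocks must be kept as honest exponentials and evaluated by independence and Lemma \ref{eulerproduct}. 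Bookkeeping the arithmetic-factor $\sum_{n\leq x}\tilde d(n)$ (which produces an extra $\prod_{p\in\mathcal P}(1+O(1/p))=O_k(1)$, absorbable, by Rankin's trick) and verifying that the $\frac{\log x}{\log y}$ factor is exactly right (not $(\frac{\log x}{\log y})^{k-1}$) are the two places a sign or exponent could slip, so I would track those explicitly. Finally, the convergence of the $j_1,j_2$-double sum genuinely relies on $J_m$ being large in terms of $k$ (guaranteed by $J_M\geq\exp(10^4k^2)$ and $J_m\geq J_M$), which is why the statement carries the gained factor $e^{-J_m}$ rather than merely $O_k(1)$ — I would make that dependence explicit via Stirling, $\frac{(2(k-1))^{j}}{j!}\leq e^{-j}$ for $j\geq J_m\geq (2e(k-1))$.
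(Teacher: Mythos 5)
You have the right skeleton, which is also the paper's: condition on the primes in $\mathcal{P}_m=\{p\le y:\ p\notin I_m\}$, apply Lemma \ref{evenmoment} to the block $I_m=(y_{m-1},y_m]$ with the weight $\sum_{n\le x}f(n)$ split so that its $\mathcal{P}_m$-smooth part becomes the fixed coefficients $c_n$, and win the factor $e^{-J_m}$ from the largeness of $J_m$. But the two steps that carry the real content are missing or wrong. First, the coupling of $\big|\sum_{n\leq x}f(n)\big|^2$ with the exponentials over the other blocks is never actually bounded: after conditioning and applying Lemma \ref{evenmoment} one is left with $\sum_{n}\tilde d(n)\,\mathbb{E}\big|\sum_{l\leq x/n,\ p|l\Rightarrow p\in\mathcal{P}_m}f(l)\big|^2\big|\exp\big((k-1)\sum_{p\in\mathcal{P}_m}\cdots\big)\big|^2$, and neither independence nor Lemma \ref{eulerproduct} evaluates this, because the inner sum over $l$ runs over exactly the primes occurring in the exponential; your appeal to "the same mean-value philosophy as Proposition \ref{p3}" does not help since Proposition \ref{p3} is a lower bound. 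The paper handles this by rewriting the exponential as $\prod_{p}\big|1+(k-1)f(p)p^{-1/2-it}+\tfrac{k(k-1)}{2}f(p^2)p^{-1-2it}\big|^2$, expanding via orthogonality, and invoking mean values of multiplicative functions (Montgomery--Vaughan Theorem 2.14 and Corollary 2.15 plus partial summation) to obtain $\frac{x}{n}e^{O(k^4)}(\log y)^{k^2-2}$; the subsequent sum of $\tilde d(n)/n$ over $n$ composed of primes in $I_m\cup(y,x]$ is precisely what produces both $\prod_{p\in I_m}(1-1/p)^{-2}$ and the factor $\frac{\log x}{\log y}$. Without this computation your claimed $x(\log y)^{k^2-1}\frac{\log x}{\log y}$ is unsupported.

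Second, your "core step" — bounding $|\mathrm{Err}_{m,l}(f)|$ pointwise by $\exp(2(k-1)|\Re D_{m,l}(f)|)$ times a deterministic factor of size $\frac{(2(k-1))^{J_m}}{J_m!}$ — is false: on the event $|\Re D_{m,l}(f)|\gtrsim J_m$ the tail of the exponential series is comparable to the whole exponential, so there is no pointwise saving; the $e^{-J_m}$ only emerges after taking expectations and comparing the moment growth $\mathbb{E}(\cdots)|\Re D_{m,l}(f)|^{2j}\ll j!\,A_m^{j}(\cdots)$, with $A_m=\sum_{p\in I_m}(2/p+3/p^2)$, against the coefficients $\frac{(k-1)^{j_1+j_2}}{j_1!j_2!}$, using $k^2A_m\ll J_m$. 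Relatedly, your transcription of Lemma \ref{evenmoment} with factor $(j_1+j_2)!$ and exponent $j_1+j_2$ is incorrect — the correct factor is $\lceil(j_1+j_2)/2\rceil!$ with exponent $(j_1+j_2)/2$ (odd powers handled by Cauchy--Schwarz between consecutive even moments), and with these the double sum converges to at most $e^{-2J_m}$; the divergence you ran into, and the improvised "fix", stem from that slip. Finally, your Mertens bound $\sum_{p\in I_m}1/p\leq\log 20+o(1)$ fails for $m=1$, where the sum is $\asymp\log\log y$; that case needs $J_1=(\log\log y)^{3/2}$ together with the absorption $\prod_{p\in I_1}(1-1/p)^{-2}\ll(\log\log y)^2\leq e^{J_1}$, which your write-up does not address.
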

\begin{remark}
    The exact shape of the error terms $e^{O(k^2\log \log )}$ and $e^{O(k^4)}$ are actually unimportant, in both propositions any bounded function in terms of $k$ would suffice. Since it follows easily from the calculation we decided to keep track of the dependence on $k$ (the error terms are not optimal). 

    The important thing is that the lower bound in Proposition 4.1. is bigger than the upper bound in Proposition 4.2. by a large factor when $m=M$. Recalling that $\frac{\log x}{\log y}=C_0$ and $J_M=\frac{C_0}{10^5 k}$, these bounds differ by a factor of $e^{O(k^4)} e^{-C_0/(10^5k)} C_0$, which can be made small for sufficiently large $C_0$.
\end{remark}
\newpage
\begin{proof}[Proof that Proposition \ref{p3} and \ref{p4} imply Proposition 3.1.]
By definition, we have the following chain of equalities.
\begin{equation*}
\begin{split}
     &\mathbb{E}\bigg|\sum_{n\leq x}f(n)\bigg|^2 R(f) \\
     =&\sum_{|l|\leq \log y/2} \mathbb{E}  \bigg|\sum_{n\leq x}f(n)\bigg|^2\prod_{m=1}^M R_{m,l}(f) \\
    = & \sum_{|l|\leq \log y/2} \mathbb{E}  \bigg|\sum_{n\leq x}f(n)\bigg|^2\prod_{m=1}^M \bigg(\exp\Big(2(k-1)\Re D_{m,l}(f)\Big)-\text{Err}_{m,l}(f)\bigg) \\
    =&   \sum_{|l|\leq \log y/2} \mathbb{E}  \bigg|\sum_{n\leq x}f(n)\bigg|^2\exp\bigg(2(k-1)\sum_{m=1}^M\Re D_{m,l}(f)\bigg)\prod_{m=1}^M\bigg[1- \frac{\text{Err}_{m,l}(f)}{\exp\big(2(k-1)\Re D_{m,l}(f)\big)}\bigg] \\
\end{split}
\end{equation*}
Let $x_m=-\frac{\text{Err}_{m,l} (f)}{\exp\big(2(k-1)\Re D_{m,l}(f)\big)}$. By the definition of $\text{Err}_{m,l}(f)$ and noting that $R_{m,l}(f)\geq 0$, we have $x_m\geq -1$.
We now use a Bernoulli type inequality in the form that if $ x_1,\ldots, x_M\geq -1$ are real numbers, then
\begin{equation}
\label{bernoulli}
    (1+x_1)\cdots (1+x_M)\geq 1-|x_1|-\ldots -|x_M|.
\end{equation}
This can easily be proven by induction on $M$. For $M=1$, \eqref{bernoulli} is trivial. Assume that $(1+x_1)\cdots (1+x_{M-1})\geq 1-|x_1|-\ldots -|x_{M-1}|$. If $1-|x_1|-\cdots -|x_{M-1}|<0$, then \eqref{bernoulli} holds as LHS is non-negative but RHS is negative. Otherwise, using $1+x_M\geq 1-|x_M|$ and the induction hypothesis we obtain
\begin{equation*}
(1+x_1)\cdots (1+x_M)\geq (1-|x_1|-\cdots -|x_{M-1}|)(1+x_M)\geq (1-|x_1|-\cdots -|x_{M-1}|)(1-|x_M|)\geq  1-|x_1|-\ldots -|x_M|.
\end{equation*}
We thus deduce
\begin{equation*}
\begin{split}
    &   \sum_{|l|\leq \log y/2} \mathbb{E}  \bigg|\sum_{n\leq x}f(n)\bigg|^2\exp\bigg(2(k-1)\sum_{m=1}^M\Re D_{m,l}(f)\bigg)\prod_{m=1}^M\bigg[1- \frac{\text{Err}_{m,l}(f)}{\exp\big(2(k-1)\Re D_{m,l}(f)\big)}\bigg] \\
    \geq & \sum_{|l|\leq \log y/2} \mathbb{E}  \bigg|\sum_{n\leq x}f(n)\bigg|^2\exp\bigg(2(k-1)\sum_{m=1}^M\Re D_{m,l}(f)\bigg)\bigg[1-\sum_{m=1}^M \frac{|\text{Err}_{m,l}(f)|}{\exp\big(2(k-1)\Re D_{m,l}(f)\big)}\bigg].
\end{split}
\end{equation*}

Notice that this expression can be lower bounded using Proposition \ref{p3} and \ref{p4}.
Recall that $\frac{\log x}{\log y}=C_0$ and $J_M=\frac{C_0}{10^5k}$, where $C_0$ is a large absolute constant which we will choose in the next step.
We thus have
\begin{equation*}
 \mathbb{E}\bigg|\sum_{n\leq x} f(n)\bigg|^2 R(f)\geq x(\log y)^{k^2-1}\big(e^{-O(k^2\log \log k)}-e^{O(k^4)-C_0/(10^5k)} C_0\big),
\end{equation*}
where we used that $\sum_{m=1}^M e^{-J_m}\leq 2e^{-J_M}$ (and the factor of $2$ can be absorbed by the error term).
Now we can choose $C_0$ to be sufficiently large in terms of $k$, so that the quantity inside the bracket remains positive, thus Proposition \ref{p1} follows.
\end{proof}
\subsection{Proof of Proposition \ref{p3}}
Note that 
\begin{equation*}
    \frac{f(p)}{p^{1/2+il/\log y}} +\frac{f(p)^2}{2p^{1+2il/\log y}}=-\log\Big(1-\frac{f(p)}{p^{1/2+il/\log y}}\Big)+O\big(p^{-3/2}\big).
\end{equation*}
Since $\sum_{p}p^{-3/2}\ll 1$, we have
\begin{equation}
\label{appending}
\begin{split}
    &\sum_{|l|\leq \log y/2}  \mathbb{E} \bigg|\sum_{n\leq x}f(n)\bigg|^2 \exp\bigg(2(k-1) \Re \sum_{p\leq y } \frac{f(p)}{p^{1/2+2il/\log y}} +\frac{f(p)^2}{2p^{1+il/\log y}}\bigg) \\
     =& e^{O(k)}  \sum_{|l|\leq \log y/2}  \mathbb{E} \bigg|\sum_{n\leq x}f(n)\bigg|^2 \prod_{p\leq y}\bigg| 1-\frac{f(p)}{p^{1/2+il/\log y}}\bigg|^{-2(k-1)}. \\
\end{split}
\end{equation}
The Euler product part of the expression behaves nicely, since all the $f(p)$-s are independent of each other by definition. Thus the difficulty is to see how it correlates with the term $\big|\sum_{n\leq x}f(n)\big|^2$.
We use a conditioning argument to proceed. Let $\mathbb{E}^{(y)}$ denote the conditional expectation with respect to $(f(p))_{p\leq y}$, i.e. we condition for primes up to $y$. Note that the Euler product is determined by $(f(p))_{p\leq y}$, so it is considered as a fixed quantity when conditioning. By the tower rule we have $\mathbb{E}=\mathbb{E}\mathbb{E}^{(y)}$, so we need to give a lower bound on $\mathbb{E}^{(y)} \big|\sum_{n\leq x}f(n)\big|^2$. Our next lemma states that this can be lower bounded by roughly speaking the difference of two Euler products.
For simplicity, let us denote
\begin{equation*}
    F_y(s)=\prod_{p\leq y}\Big|1-\frac{f(p)}{p^s}\Big|^{-1}.
\end{equation*}
\newpage
\begin{lemma}
\label{condition_y}
    Assume that $y<x^{1/10}$ is large. Then, for any $\beta>0$, we have
     \begin{equation*}
        \mathbb{E}^{(y)} \bigg|\sum_{n\leq x}f(n)\bigg|^2\gg \frac{x}{\log y} \bigg[  \int_{-1/2}^{1/2} |F_{y}(1/2+\beta+it)|^2 dt-x^{-\beta/4} \int_{-\infty}^{\infty} |F_{y}(1/2+\beta/2+it)|^2 \frac{dt}{1/4+t^2}\bigg],
    \end{equation*}
where the implied constant is absolute.
\end{lemma}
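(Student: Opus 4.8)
\textbf{Proof proposal for Lemma \ref{condition_y}.}

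The plan is to exploit the fact that $|\sum_{n\le x} f(n)|^2$ is, after conditioning on $(f(p))_{p\le y}$, a second moment of a Dirichlet‑polynomial‑type random variable, and to compare it to an $L^2$ average of the associated Euler product via Lemma \ref{parseval}. Write $f(n) = f_y(n)\, g(n)$ where $f_y(n)$ depends only on the primes $\le y$ and $g(n)$ is built from primes in $(y,x]$; under $\mathbb{E}^{(y)}$ the factor $f_y(n)$ is a deterministic coefficient and $g$ is a fresh Steinhaus random multiplicative function. First I would pass to a smoothed or truncated version: since $y < x^{1/10}$, the tail contributions from $n$ with a prime factor exceeding some threshold are controllable, and the key point is a lower bound of the shape $\mathbb{E}^{(y)}|\sum_{n\le x} f(n)|^2 \gg \frac{1}{\log y}\int_{1}^{x} \frac{1}{u}\,\big|\sum_{n\le u} f_y(n)\big(\ldots\big)\big|^2$—i.e. we want to relate the sharp‑cutoff sum at $x$ to an integral average of sharp‑cutoff sums, paying only a factor $\log y$. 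This is the standard device (cf. Harper \cite{harper2020moments}): a sum over $n\le x$ of a completely multiplicative function splits multiplicatively over the $y$‑smooth and $y$‑rough parts, and averaging the rough cutoff over a short range of scales loses only $\log y$ because $y$‑smooth numbers are sparse on that scale.

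Next I would apply Lemma \ref{parseval} to the Dirichlet series $F(s) = \sum_n a_n n^{-s}$ with $a_n$ the relevant coefficients (essentially $f_y$ times the rough part), at abscissa $\sigma = 1/2 + \beta$: this converts the integral $\int_1^\infty |\sum_{n\le u} a_n|^2 u^{-1-2\sigma}\,du$ into $\frac{1}{2\pi}\int_{-\infty}^{\infty} |F(1/2+\beta+it)|^2 |1/2+\beta+it|^{-2}\,dt$. Taking conditional expectation over the rough primes, $\mathbb{E}^{(y)}$ of the Dirichlet polynomial squared diagonalises, and the rough Euler factors average to a convergent product that is $\asymp 1$; what survives is $|F_y(1/2+\beta+it)|^2$. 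To extract the clean main term $\int_{-1/2}^{1/2}|F_y(1/2+\beta+it)|^2\,dt$ I would restrict the Parseval integral to $|t|\le 1/2$, using $|1/2+\beta+it|^{-2}\asymp 1$ there, and then bound the complementary range $|t|>1/2$ from above: shifting $\beta \mapsto \beta/2$ and using that $F_y$ has bounded degree (only primes $\le y$) together with the crude size bound coming from the Dirichlet polynomial length $\le x$ gives a factor $x^{-\beta/4}$ savings, producing exactly the subtracted term $x^{-\beta/4}\int_{-\infty}^{\infty}|F_y(1/2+\beta/2+it)|^2 (1/4+t^2)^{-1}\,dt$. The weight $1/4+t^2$ appears naturally as $|1/2+it|^2$ from the shifted Parseval identity.

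The main obstacle, I expect, is the passage from the single sharp cutoff $\sum_{n\le x} f(n)$ to the $L^2$‑average over scales that feeds Parseval's identity, while only losing the factor $\log y$ and not a larger power — i.e. making precise the heuristic ``sampling at one scale is comparable to averaging over $\log y$ scales.'' This requires carefully separating the $y$‑smooth part of each $n$ (whose count up to $x$ governs how much one loses) from the rough part, controlling the $n$ near $x$, and ensuring the error terms from truncating the Dirichlet series and from the rough‑prime Euler product are genuinely lower order — hence the hypothesis $y < x^{1/10}$ and the appearance of $x^{-\beta/4}$ rather than something weaker. Everything else (Lemma \ref{parseval}, diagonalisation of the rough part, the elementary bound $|1/2+\beta+it|^{-2}\asymp 1$ on $|t|\le 1/2$) is routine.
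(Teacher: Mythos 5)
Your overall skeleton --- split each $n$ into its $y$-smooth and $y$-rough parts, observe that under $\mathbb{E}^{(y)}$ the rough part diagonalises, reduce to a weighted average over scales of the smooth sums $\sum_{m\le t,\,P^+(m)\le y} f(m)$, and finally apply Lemma \ref{parseval} to the conditionally fixed Euler product $F_y$ --- is indeed the paper's route. However, the two mechanisms that actually produce the stated bound are either missing or misstated. The step you defer as ``the main obstacle'' is the whole content of the lemma, and your heuristic for it points the wrong way: the factor $1/\log y$ has nothing to do with $y$-smooth numbers being sparse; it is the density of $y$-\emph{rough} integers. The paper restricts to rough $n$ with $x^{3/4}/2<n\le x$ (so the smooth cofactor runs only up to $x^{1/4}$), groups by $r=\lfloor x/n\rfloor$, and applies a lower-bound sieve (Rosser--Iwaniec) to get $\#\{n\in(\tfrac{x}{r+1},\tfrac{x}{r}]:\,P^-(n)>y\}\gg \tfrac{x}{r^2\log y}$; the hypotheses $y<x^{1/10}$ and $r\le x^{1/4}$ are used precisely here, to make the interval of length $\gg x^{1/2}$ and the sieve parameter $s=\log X/\log y>3$. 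Without this ingredient (or a substitute) your claimed inequality $\mathbb{E}^{(y)}\big|\sum_{n\le x}f(n)\big|^2\gg \frac{x}{\log y}\int(\cdots)$ is unproved, and ``averaging over scales loses only $\log y$ because smooth numbers are sparse'' is not a correct reason for it.

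Second, your explanation of the subtracted term is wrong. Restricting the Parseval frequency integral to $|t|\le 1/2$ for a \emph{lower} bound costs nothing, since the integrand is nonnegative; and $F_y$ is an Euler product over $p\le y$, not a Dirichlet polynomial of length $\le x$, so no $x^{-\beta/4}$ saving can be extracted from the range $|t|>1/2$. The term $x^{-\beta/4}\int_{-\infty}^{\infty}|F_y(1/2+\beta/2+it)|^2\frac{dt}{1/4+t^2}$ arises on the physical side: because only smooth cofactors up to $x^{1/4}$ are retained, the scale integral is truncated at $x^{1/4}$, so one writes $\int_1^{x^{1/4}}|\cdot|^2\frac{dt}{t^2}\ge\int_1^{\infty}|\cdot|^2\frac{dt}{t^{2+2\beta}}-\int_{x^{1/4}}^{\infty}|\cdot|^2\frac{dt}{t^{2+2\beta}}$, bounds the tail by $x^{-\beta/4}\int_{x^{1/4}}^{\infty}|\cdot|^2\frac{dt}{t^{2+\beta}}$ via Rankin's trick ($t^{-\beta}\le x^{-\beta/4}$ for $t\ge x^{1/4}$), and only then applies Lemma \ref{parseval} at the two shifts; that is where $\beta/2$ and the weight $1/4+t^2$ come from. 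Relatedly, no averaging of ``rough Euler factors'' occurs anywhere: after diagonalisation the rough primes enter only through the count of rough integers, and the lemma is a pointwise statement given $(f(p))_{p\le y}$, so no expectation over the rough primes of any Euler product is taken.
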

\begin{remark}
    We will choose $\beta=\frac{C}{\log y}$, where $C$ is a sufficiently large constant. With this choice the integrals are roughly the same size, however $x^{-\beta/4}\leq e^{-C/4}$, which becomes small, thus the first term will dominate which gives us a suitable lower bound. 
\end{remark}
\begin{proof}[Proof of Lemma \ref{condition_y}]
The argument follows the proof of Proposition 3 in \cite{harper2020moments}. If $n$ is a positive integer, let $P^{-}(n)$ and $P^{+}(n)$ be its smallest and largest prime factor respectively. 
Any positive integer $n$ can be written uniquely as a product $n=a(n)b(n)$ where $P^{+}(a(n))\leq y$ and $P^{-}(b(n))>y$. Therefore 
\begin{equation*}
    \mathbb{E}^{(y)} \bigg|\sum_{n\leq x}f(n)\bigg|^2=\mathbb{E}^{(y)} \bigg|\sum_{\substack{n\leq x\\ P^{-}(n)>y } }f(n) \sum_{\substack{m\leq x/n \\ P^{+}(m)\leq y }}f(m) \bigg|^2.
\end{equation*}
Letting $c_n= \sum_{\substack{m\leq x/n \\ P^+(m)\leq y } }f(m) $, we see that $c_n$ is determined by $(f(p))_{p\leq y}$, so it may be considered a fixed quantity when taking $\mathbb{E}^{(y)}$. Moreover, if $n_1, n_2$ are such that $P^{-}(n_1n_2)>y$, the orthogonality relation gives $\mathbb{E}^{(y)} f(n_1)\overline{f(n_2)}=\mathbf{1}(n_1=n_2)$. We thus obtain
\begin{equation*}
    \mathbb{E}^{(y)} \bigg|\sum_{\substack{n\leq x\\ P^{-}(n)>y } }f(n) c_n \bigg|^2=\sum_{\substack{n\leq x \\ P^{-}(n)>y }} |c_n|^2.
\end{equation*}
Since we only we want a lower bound, we may restrict our range of summation to $x^{3/4}/2 <n\leq x$, say, so we get
\begin{equation}
\label{con}
     \mathbb{E}^{(y)} \bigg|\sum_{n\leq x}f(n)\bigg|^2\geq  \sum_{\substack{x^{3/4}/2 <n\leq x\\ P^-(n)>y } }\bigg|  \sum_{\substack{m\leq x/n \\ P^+(m)\leq y }}f(m) \bigg|^2\geq  \sum_{r\leq x^{1/4} }\bigg|  \sum_{\substack{m\leq r \\ P^+(m)\leq y }}f(m) \bigg|^2 \cdot \sum_{\substack{\frac{x}{r+1}<n\leq \frac{x}{r} \\ P^-(n)>y}}  1,
\end{equation}
where the second inequality is obtained via the substitution $r=\big\lfloor \frac{x}{n}\rfloor$. 
\newpage
 The inner sum
\begin{equation*}
     \sum_{\substack{\frac{x}{r+1}<n\leq \frac{x}{r} \\ P^-(n)>y}}  1
\end{equation*}
can be handled by well-known sieve bounds because $y<x^{1/10}$ and the length of the interval $[\frac{x}{r+1}, \frac{x}{r}]$ is $\frac{x}{r}-\frac{x}{r+1}\gg \frac{x}{r^2}\gg x^{1/2}$ when $r\leq x^{1/4}$. To be precise, we apply (the lower bound part of) Theorem 12.5 of \cite{friedlander2010opera} (i.e. the Rosser-Iwaniec sieve) for the sequence $\mathcal{A}=\{n: \frac{x}{r+1}<n\leq \frac{x}{r} \}$ with $z:=y$ and $X=\frac{x}{r}-\frac{x}{r+1}$. The important thing here is that $y<x^{1/10}$ and $r\leq x^{1/4}$ gives us $s:=\frac{\log X}{\log y}>3$, so $f(s)\gg 1$, where $f$ is now the function appearing in the statement of the theorem defined by delayed differential equations (From now on we switch back to $f$ being the random multiplicative function). Also $V(z)=\prod_{p\leq y}\big(1-\frac{1}{p}\big)\gg \frac{1}{\log y}$, thus
\begin{equation*}
     \sum_{\substack{\frac{x}{r+1}<n\leq \frac{x}{r} \\ P^-(n)>y}}  1\gg \frac{x}{r^2\log y}.
\end{equation*}
Hence \eqref{con} is
\begin{equation*}
    \gg 
    \frac{x}{\log y} \sum_{r\leq x^{1/4} } \frac{1}{r^2} \bigg|  \sum_{\substack{m\leq r \\ P^+(m)\leq y }}f(m) \bigg|^2 \gg \frac{x}{\log y} \int_1^{x^{1/4}}\bigg|\sum_{\substack{m\leq t \\ P^+(m)\leq y } }f(m) \bigg|^2 \frac{dt}{t^2}.
\end{equation*}

The integral is in a shape where Lemma \ref{parseval} is applicable. However our integral only goes up to $x^{1/4}$ and not $\infty$, so we introduce a Rankin-type trick to get around. For any $0<\beta<1/10$, we have 
\begin{equation*}
\begin{split}
        \int_1^{x^{1/4}}\bigg|\sum_{\substack{m\leq t \\ P^+(m)<y } }f(m) \bigg|^2 \frac{dt}{t^2} \geq  & \int_1^{\infty} \bigg|\sum_{\substack{m\leq t \\ P^+(m)<y } }f(m) \bigg|^2 \frac{dt}{t^{2+2\beta}}- \int_{x^{1/4}}^{\infty} \bigg|\sum_{\substack{m\leq t \\ P^+(m)<y } }f(m) \bigg|^2 \frac{dt}{t^{2+2\beta} } \\
       \geq & \int_1^{\infty} \bigg|\sum_{\substack{m\leq t \\ P^+(m)<y } }f(m) \bigg|^2 \frac{dt}{t^{2+2\beta}}- x^{-\beta/4} \int_{x^{1/4}}^{\infty} \bigg|\sum_{\substack{m\leq t \\ P^+(m)<y } }f(m) \bigg|^2 \frac{dt}{t^{2+\beta} }\\
     \geq & \frac{1}{2\pi} \bigg(\int_{-1/2}^{1/2} |F_y(1/2+\beta+it)|^2 dt-x^{-\beta/4} \int_{-\infty}^{\infty} |F_y(1/2+\beta/2+it)|^2 \frac{dt}{1/4+t^2} \bigg),\\
\end{split}
\end{equation*}
which proves the lemma (note the restriction $\beta<1/10$ was only needed so that we could drop the denominator in the first integral as it was less than 1). 
\end{proof}
\newpage
 We have thus obtained a lower bound on $\mathbb{E}^{(y)}\big|\sum_{n\leq x}f(n)\big|^2$, so we can continue \eqref{appending} as follows
\begin{equation*}
\begin{split}
     &\mathbb{E} \bigg|\sum_{n\leq x}f(n)\bigg|^2 \sum_{|l|\leq \log y/2} |F_{y}(1/2+il/\log y)|^{2(k-1)} \\
     =&\mathbb{E} \mathbb{E}^{(y)}\bigg|\sum_{n\leq x}f(n)\bigg|^2 \sum_{|l|\leq \log y/2} |F_{y}(1/2+il/\log y)|^{2(k-1)} \\
     \gg & \frac{x}{\log y}\sum_{|l|\leq \log y/2}\bigg[ \int_{-1/2}^{1/2}\mathbb{E}|F_{y}(1/2+\beta+it)|^{2}  |F_{y}(1/2+il/\log y)|^{2(k-1)} dt \\
    & -x^{-\beta/4} \int_{-\infty}^{\infty}\mathbb{E}|F_{y}(1/2+\beta/2+it)|^{2}|F_{y}(1/2+il/\log y)|^{2(k-1)} \frac{dt}{1/4+t^2}\bigg]. \\
\end{split}
\end{equation*}
As discussed in the above remark, we will choose $\beta=\frac{C}{\log y}$, where $C$ is a sufficiently large constant chosen later. This ensures that the factor $x^{-\beta/4}$ is small, so the contribution from the second integral becomes negligible. Note that for the rest of the argument, our implied constants are not allowed to depend on $C$.

We now handle the first integral and show that 
\begin{equation}
\label{positive_expected}
    \int_{-1/2}^{1/2} \mathbb{E}|F_y(1/2+\beta+it)|^{2}  |F_y(1/2+il/\log y)|^{2(k-1)} dt\geq e^{O(k^2\log \log k)} (\log y)^{k^2-1}C^{1-2k}.
\end{equation}
Our expression now involves the correlation of two Euler products, so Lemma 1 is applicable with $\sigma_1=\beta$, $\sigma_2=0$, $t_2=\frac{l}{\log y}.$ and $z=200k^2$. For small primes we have the crude bound
\begin{equation*}
    \prod_{p\leq 200k^2}\Big| 1-\frac{f(p)}{p^{1/2+\beta+it}}\Big|^{-2}\Big| 1-\frac{f(p)}{p^{1/2+\beta+it}}\Big|^{-2(k-1)}=e^{O(k^2)},
\end{equation*}
so Lemma 1 and Mertens' second estimate give us
\begin{equation*}
\begin{split}
    &\mathbb{E}|F_{y}(1/2+\beta+it)|^{2}  |F_{y}(1/2+il/\log y)|^{2(k-1)} \\
    = & \exp\bigg(\sum_{200k^2<p\leq y}\frac{1}{p^{1+2\beta }}+\frac{(k-1)^2}{p}+\frac{2(k-1)\cos\big((t-l/\log y)\log p\big)}{p^{1+\beta}}+O(k^2)\bigg) \\
    = & \exp\bigg(\sum_{p\leq y}\frac{1}{p^{1+2\beta }}+\frac{(k-1)^2}{p}+\frac{2(k-1)\cos\big((t-l/\log y)\log p\big)}{p^{1+\beta}}+O(k^2\log\log k)\bigg). \\
\end{split}
\end{equation*}
To prove \eqref{positive_expected}, we lower bound this expression with an explicit dependence on $C$.
By the inequality $\sum_{p>y} \frac{1}{p^{1+1/\log y} } \ll 1$ and $\log \zeta(1+s)=-\log s +O(1)$ for $s\ll 1$, if $C\geq 1/2$ we have
\begin{equation*}
\begin{split}
    \sum_{p\leq y} \frac{1}{p^{1+2\beta} } \geq &\sum_{p} \frac{1}{p^{1+2\beta} } -\sum_{p\geq y} \frac{1}{p^{1+1/\log y}} \\ 
    =&\log \zeta(1+2\beta) +O(1)  \\
    = & \log \frac{\log y}{2C}+O(1) \\
    = & \log\log y-\log C+O(1). \\
\end{split}
\end{equation*}
Similarly, using $\Re \log \zeta(1+s)=- \log |s|+O(1)$ for $s\ll 1$, when $\big|t-\frac{l}{\log y}\big|\leq \frac{1}{\log y}$ and $C\geq 3$ we obtain
\begin{equation*}
\begin{split}
   \sum_{p\leq y}  \frac{\cos\big((t-l/\log y)\log p\big)}{p^{1+\beta}} &\geq \Re \log \zeta\big(1+\beta+ (t-l/\log y)i\big)+O(1) \\
   &= -\log |\beta +(t-l/\log y)i | +O(1) \\
   &=\log\log y-\frac{1}{2}\log\big(C^2+ (t\log y-l)^2\big)+O(1) \\
   &=\log\log y-\log C+O(1).
\end{split}
\end{equation*}
Therefore, if $t$ satisfies $\big|t-\frac{l}{\log y}\big|\leq \frac{1}{\log y}$, we get
\begin{equation*}
    \mathbb{E}|F_{y}(1/2+\beta+it)|^{2}  |F_{y}(1/2+il/\log y)|^{2(k-1)}\geq e^{O(k^2\log \log k)}(\log y)^{k^2}C^{1-2k},
\end{equation*}
which proves \eqref{positive_expected}.

We now handle the second integral and show
\begin{equation}
\label{secondintegral}
    \int_{-\infty}^{\infty}\mathbb{E}|F_y(1/2+\beta/2+it)|^{2}|F_y(1/2+il/\log y)|^{2(k-1)} \frac{dt}{1/4+t^2}\leq e^{O(k^2\log \log k)}  (\log y)^{k^2-1}.
\end{equation}

For notational simplicity, we assume that $l=0$, the other cases can be handled the same way. For any $t\in \mathbb{R}$ Lemma 1 gives us
\begin{equation*}
    \mathbb{E}|F_y(1/2+\beta/2+it)|^{2}|F_y(1/2)|^{2(k-1)} \leq (\log y)^{(k-1)^2+1}\exp\bigg(2(k-1) \sum_{p\leq y}\frac{\cos(t\log p)}{p^{1+\beta/2}}+O(k^2\log \log k)\bigg). 
\end{equation*}

By Lemma \ref{cosine} we have
\begin{equation*}
\begin{split}
    &\int_{-\infty}^{\infty} \exp\bigg(2(k-1) \sum_{p\leq y}\frac{\cos(t\log p)}{p^{1+\beta/2}}\bigg)\frac{dt}{t^2+1/4} \\
    \leq &e^{O(k)}\bigg[\int_0^{1/\log y} (\log y)^{2(k-1)} dt+\int_{1/\log y}^{10} t^{-2(k-1)} dt+\int_{10}^{\infty} \frac{(\log t)^{2(k-1)} }{t^2}dt\bigg] \leq e^{O(k)}\big( (\log y)^{2k-3}+1\big).
\end{split}
\end{equation*}

Since $k\geq 2$, the first term dominates and thus \eqref{secondintegral} is shown.

 Finally, \eqref{positive_expected} and \eqref{secondintegral} gives us
\begin{equation*}
\begin{split}
    &\int_{-1/2}^{1/2}\mathbb{E}|F_y(1/2+\beta+it)|^2  |F_y(1/2+  il/\log y)|^{2(k-1)}dt - 
    \\ 
    & x^{-\beta/4}  \int_{-\infty}^{\infty}\mathbb{E}|F_y(1/2+\beta/2+it)|^2|F_y(1/2+il/\log y)|^{2(k-1)} \frac{dt}{1/4+t^2} \\
    \geq & (\log y)^{k^2-1}(C^{1-2k}e^{O(k^2\log \log k)}-e^{-C/4} e^{O(k^2\log\log k)} ).\\
\end{split}
\end{equation*}

Now we choose $C$ to be a large multiple of $k^2\log\log k$ to get the proposition.


\subsection{Proof of Proposition \ref{p4}}

The proof of the proposition will be a relatively straightforward consequence of the following lemma. Fix $1\leq m\leq M$, and let us denote $I_m=(y_{m-1}, y_m]$ and $\mathcal{P}_m=\{p\leq y: \,  p\not\in I_m\}$.
\newpage
\begin{lemma}
\label{hardestlemma}
   For any $t\in \mathbb{R}$ and $j\in \mathbb{N}$, we have
    \begin{equation*}
    \begin{split}
        &\mathbb{E}  \bigg|\sum_{n\leq x}f(n)\bigg|^2 \bigg|\exp\bigg( (k-1)\sum_{p\in \mathcal{P}_m} \frac{f(p)}{p^{1/2+it}}+\frac{f(p)^2}{2p^{1+2it} }\bigg)  \bigg|^2 \bigg| \sum_{p\in I_m} \frac{f(p)}{p^{1/2+it}}+\frac{f(p)^2}{2p^{1+2it} }\bigg|^{2j} \\
        \leq & e^{O(k^4)}x(\log y)^{k^2-2}j! \bigg(\sum_{p\in I_m} \frac{2}{p}+\frac{3}{p^2}\bigg)^j \prod_{p\in I_m}\bigg(1-\frac{1}{p}\bigg)^{-2}\frac{\log x}{\log y}.
    \end{split}
    \end{equation*}
\end{lemma}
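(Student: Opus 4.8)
The plan is to separate the primes of $I_m$ from the rest, use Lemma~\ref{evenmoment} to dispose of the $2j$-th power, and then bound the surviving correlation between a truncated Euler product and a smooth sum by the conditioning-and-Parseval strategy used for Proposition~\ref{p3}. \emph{Step 1: peeling off the $2j$-th moment.} Every $n$ factors uniquely as $n=ab$ with $a$ composed of primes from $I_m$ and $b$ coprime to $\prod_{p\in I_m}p$, so $\sum_{n\le x}f(n)=\sum_{\substack{a\le x\\ p\mid a\Rightarrow p\in I_m}}f(a)c_a$ with $c_a=\sum_{\substack{b\le x/a\\ (b,\prod_{p\in I_m}p)=1}}f(b)$. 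Conditioning on $(f(p))_{p\notin I_m}$ freezes $\big|\exp((k-1)\sum_{p\in\mathcal P_m}\cdots)\big|^2$ and all the $c_a$, while $(f(p))_{p\in I_m}$ remain independent Steinhaus variables; Lemma~\ref{evenmoment} (taken with $\mathcal P$ the primes of $I_m$, $a_p=p^{-it}$, $a_{p^2}=\tfrac12 p^{-2it}$, so that $\widetilde d(a)=\tau(a)$ for $I_m$-smooth $a$) then bounds the left-hand side by
\begin{equation*}
\ll j!\,\Big(\sum_{p\in I_m}\tfrac2p+\tfrac3{p^2}\Big)^j\cdot\mathbb E\bigg[\Big|\exp\Big((k-1)\sum_{p\in\mathcal P_m}\tfrac{f(p)}{p^{1/2+it}}+\tfrac{f(p)^2}{2p^{1+2it}}\Big)\Big|^2\sum_{\substack{a\le x\\ p\mid a\Rightarrow p\in I_m}}\tau(a)\,|c_a|^2\bigg].
\end{equation*}
Expanding the exponential as in \eqref{appending} replaces the first factor by $e^{O(k)}|F_{\mathcal P_m}(1/2+it)|^{2(k-1)}$, where $F_{\mathcal P_m}(s)=\prod_{p\in\mathcal P_m}|1-f(p)p^{-s}|^{-1}$.

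\emph{Step 2: the remaining expectation.} Condition now on $(f(p))_{p\le y}$, so $|F_{\mathcal P_m}(1/2+it)|^{2(k-1)}$ is fixed. Writing each $b$ in $c_a$ as a product of its $y$-smooth part (necessarily $\mathcal P_m$-smooth) and a $y$-rough part, orthogonality of $f$ on the $y$-rough integers gives, just as in the proof of Lemma~\ref{condition_y},
\begin{equation*}
\mathbb E^{(y)}|c_a|^2=\sum_{\substack{v\le x/a\\ P^-(v)>y}}\Big|\sum_{\substack{b\le x/(av)\\ p\mid b\Rightarrow p\in\mathcal P_m}}f(b)\Big|^2 .
\end{equation*}
Since $|F_{\mathcal P_m}(1/2+it)|^{2(k-1)}$ is $(f(p))_{p\le y}$-measurable, the remaining expectation equals $\sum_{\substack{a\le x\\ p\mid a\Rightarrow p\in I_m}}\tau(a)\sum_{\substack{v\le x/a\\ P^-(v)>y}}\mathbb E\big[|F_{\mathcal P_m}(1/2+it)|^{2(k-1)}|S(x/(av))|^2\big]$, where $S(T)=\sum_{b\le T,\,p\mid b\Rightarrow p\in\mathcal P_m}f(b)$. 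The core estimate is a \emph{pointwise} bound $\mathbb E\big[|F_{\mathcal P_m}(1/2+it)|^{2(k-1)}|S(T)|^2\big]\ll_k T(\log y)^{k^2-2}$ valid for all $T\ge1$, proved by using Parseval (Lemma~\ref{parseval}, whose Dirichlet series is exactly $F_{\mathcal P_m}$) together with a Rankin truncation to reduce to $\int\mathbb E\big[|F_{\mathcal P_m}(1/2+it)|^{2(k-1)}|F_{\mathcal P_m}(1/2+\gamma+iu)|^2\big]\tfrac{du}{1/4+u^2}$; Lemma~\ref{eulerproduct} (applied on the one or two intervals that make up $\mathcal P_m$, with small primes absorbed into $e^{O(k^2)}$), Mertens' theorem, and the cosine estimate Lemma~\ref{cosine} evaluate this as $e^{O(k^4)}(\log y)^{(k-1)^2+1}(\log y)^{2k-3}=e^{O(k^4)}(\log y)^{k^2-1}$, where $k\ge2$ ensures the middle regime of Lemma~\ref{cosine} dominates.

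\emph{Step 3: assembling.} Feeding the pointwise bound in bounds the expectation from Step~1 by $\ll_k x(\log y)^{k^2-2}\big(\sum_{p\mid a\Rightarrow p\in I_m}\tfrac{\tau(a)}{a}\big)\big(\sum_{v\le x,\,P^-(v)>y}\tfrac1v\big)$; the first factor is $\prod_{p\in I_m}(1-1/p)^{-2}$ by Euler's identity $\sum_{j\ge0}(j+1)p^{-j}=(1-1/p)^{-2}$, and the second is $\ll\tfrac{\log x}{\log y}$ by partial summation against the sieve bound $\#\{v\le t:P^-(v)>y\}\ll t/\log y$. Together with the $j!\,(\sum_{p\in I_m}\tfrac2p+\tfrac3{p^2})^j$ factor from Step~1, this is precisely the claimed inequality, the $e^{O(k^4)}$ comfortably absorbing all the $e^{O(k)}$ and $e^{O(k^2)}$ errors accumulated along the way.

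\emph{Main obstacle.} The delicate point is Step~2: one cannot bound $\sum_a\tau(a)|c_a|^2$ against the Euler factor by absolute values, since that discards the cancellation among the $f(b)$ and makes the resulting bound diverge, which forces the conditioning/sieve/Parseval reduction and the proof of the pointwise estimate uniformly in $T$ (and, as it arises in the application, uniformly in the shift $t$ — for which Lemma~\ref{cosine} suffices once $t$ is restricted to the relevant bounded range). Carrying the divisor weight $\tau(a)$ and the coprimality conditions through this so as to land exactly on $\prod_{p\in I_m}(1-1/p)^{-2}\cdot\tfrac{\log x}{\log y}$, and checking that the ``bad'' range of the sieve (where $x/(av)$ is too small for the fundamental lemma) contributes negligibly, is the bulk of the work; everything else is routine once the pointwise bound is in hand.
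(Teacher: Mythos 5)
Your Step 1 (peeling off the $2j$-th power by conditioning off the $I_m$-variables and applying Lemma~\ref{evenmoment}, with $\widetilde d(a)=\tau(a)$) and your Step 3 bookkeeping ($\sum_a\tau(a)/a=\prod_{p\in I_m}(1-1/p)^{-2}$ and $\sum_{P^-(v)>y,\,v\le x}1/v\ll\log x/\log y$) match the paper. The genuine gap is the ``core estimate'' of Step 2: the pointwise bound $\mathbb E\big[|F_{\mathcal P_m}(1/2+it)|^{2(k-1)}|S(T)|^2\big]\ll_k T(\log y)^{k^2-2}$ for every fixed $T$ is asserted but not proved by the tools you cite. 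Lemma~\ref{parseval} is an identity for the weighted average $\int_1^\infty |S(u)|^2u^{-2-2\sigma}\,du$; it controls $|S(T)|^2$ only on average over $T$, and since neither $|S(T)|^2$ nor its correlation with $|F_{\mathcal P_m}|^{2(k-1)}$ is monotone in $T$, no pointwise statement follows without a further idea (nor can you expand $|F_{\mathcal P_m}|^{2(k-1)}$ directly, since for non-integer $k$ it is not a Dirichlet polynomial). Moreover, even if you restructure to use only averaged information (converting the $v$-sum to an integral with an upper-bound sieve), the Rankin step now works against you: for an upper bound you must pay a factor $T_{\max}^{2\beta}$ to pass from the weight $u^{-2}$ to $u^{-2-2\beta}$, and with the natural choice $\beta\asymp 1/\log y$ this is $e^{O(C_0)}$ with $C_0=\log x/\log y$ --- fatal, because the implied constant in the lemma must be independent of $C_0$ (Proposition~\ref{p4} is only useful because $C_0$ can be chosen large to beat $e^{-J_M}=e^{-C_0/(10^5k)}$). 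One would have to take $\beta\asymp1/\log x$ and re-examine the Euler-product evaluation, and also handle the short-interval endpoint of the sieve; none of this is carried out, and you yourself flag it as ``the bulk of the work.''

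For comparison, the paper avoids Parseval and $y$-smooth conditioning entirely at this point: after the even-moment step it replaces the exponential over $\mathcal P_m$ by the product $\prod_{p\in\mathcal P_m,\,p\ge 10k^2}\big|1+(k-1)f(p)p^{-1/2-it}+\tfrac{k(k-1)}2 f(p^2)p^{-1-2it}\big|^2$ (inequality \eqref{rewrite}), which \emph{is} the square of a Dirichlet series with nonnegative multiplicative coefficients $h$ ($h(p)=k-1$, $h(p^2)=k(k-1)/2$). Orthogonality $\mathbb E|\sum_n a_nf(n)|^2=\sum_n|a_n|^2$ then reduces the problematic expectation to the deterministic count \eqref{square}, which is bounded pointwise in $x/n$ by the gcd decomposition $v_1=dw_1$, $v_2=dw_2$ and the Montgomery--Vaughan mean-value estimates (Theorem 2.14/Corollary 2.15 of \cite{montgomery2007multiplicative}) for $h$, yielding exactly the $\tfrac{x}{n}e^{O(k^4)}(\log y)^{k^2-2}$ that plays the role of your missing pointwise bound. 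That replacement-by-a-multiplicative-coefficient polynomial is the ingredient your proposal lacks; with it, your assembly in Step 3 would go through.
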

\begin{proof}
Our expression involves the product of three terms, where if we ignored the second term (exponential), we could easily apply Lemma \ref{evenmoment}. Therefore we first condition with respect to primes in $\mathcal{P}_m$, which we denote by $\mathbb{E}^{(\mathcal{P}_m)}$, use the tower rule $\mathbb{E}=\mathbb{E}\mathbb{E}^{(\mathcal{P}_m)}$, so our expectation is equal to
    \begin{equation*}
    \begin{split}
        &\mathbb{E}\mathbb{E}^{(\mathcal{P}_m)} \bigg|\sum_{n\leq x}f(n)\bigg|^2 \bigg|\exp\bigg( (k-1)\sum_{p\in \mathcal{P}_m} \frac{f(p)}{p^{1/2+it}}+\frac{f(p)^2}{2p^{1+2it} }\bigg)  \bigg|^2 \bigg| \sum_{p\in I_m} \frac{f(p)}{p^{1/2+it}}+\frac{f(p)^2}{2p^{1+2it} }\bigg|^{2j} \\
        =&\mathbb{E}\bigg|\exp\bigg( (k-1)\sum_{p\in \mathcal{P}_m} \frac{f(p)}{p^{1/2+it}}+\frac{f(p)^2}{2p^{1+2it} }\bigg)  \bigg|^2 \mathbb{E}^{(\mathcal{P}_m)}  \bigg|\sum_{n\leq x}f(n)\bigg|^2 \bigg| \sum_{p\in I_m} \frac{f(p)}{p^{1/2+it}}+\frac{f(p)^2}{2p^{1+2it} }\bigg|^{2j}.
    \end{split}
    \end{equation*}
We would like to apply Lemma \ref{evenmoment} to the inner expectation, for which  we need to separate the terms that involve a prime in $\mathcal{P}_m$. We use the same trick as we did at the beginning of the proof of Lemma \ref{condition_y} and write
\begin{equation*}
\mathbb{E}^{(\mathcal{P}_m)}  \bigg|\sum_{n\leq x}f(n)\bigg|^2 \bigg| \sum_{p\in I_m} \frac{f(p)}{p^{1/2+it}}+\frac{f(p)^2}{2p^{1+2it}} \bigg|^{2j}=\mathbb{E}^{(\mathcal{P}_m)}\bigg|\sum_{\substack{n\leq x \\p|n \implies p\not\in \mathcal{P}_m}}f(n)\sum_{\substack{l\leq x/n \\p|l \implies p\in \mathcal{P}_m}} f(l)\bigg|^2\bigg| \sum_{p\in I_m} \frac{f(p)}{p^{1/2+it}}+\frac{f(p)^2}{2p^{1+2it} }\bigg|^{2j}
\end{equation*}
Let us denote
\begin{equation*}
    c_n=\sum_{\substack{l\leq x/n \\p|l \implies p\in \mathcal{P}_m}} f(l),
\end{equation*}
which is considered a fixed quantity, since we condition on primes in $\mathcal{P}_m$. We are now in the position to apply Lemma \ref{evenmoment}, so letting $\Tilde{d}(n)=\sum_{d|n} \mathbf{1}(p|d \implies p\in I_m)$, we get
\begin{equation*}
    \mathbb{E}^{(\mathcal{P}_m)}\bigg|\sum_{\substack{n\leq x \\p|n \implies p\not\in \mathcal{P}_m}}f(n)c_n\bigg|^2\bigg|\sum_{p\in I_m} \frac{f(p)}{p^{1/2+it}}+\frac{f(p)^2}{2p^{1+2it} }\bigg|^{2j}\ll \bigg(\sum_{\substack{n\leq x \\p|n \implies p\not\in \mathcal{P}_m}}\Tilde{d}(n) |c_n|^2\bigg) j! \bigg(\sum_{p\in I_m}\frac{2}{p}+\frac{3}{p^2}\bigg)^j.
\end{equation*}
Hence we obtained the upper bound
\begin{equation*}
    j! \bigg(\sum_{p\in I_m}\frac{2}{p}+\frac{3}{p^2}\bigg)^j\sum_{\substack{n\leq x \\p|n \implies p\not\in \mathcal{P}_m}}\Tilde{d}(n)\mathbb{E} \bigg|\sum_{\substack{l\leq x/n \\p|l \implies p\in \mathcal{P}_m}} f(l)\bigg|^2 \bigg|\exp\bigg( (k-1)\sum_{p\in \mathcal{P}_m} \frac{f(p)}{p^{1/2+it}}+\frac{f(p)^2}{2p^{1+2it} }\bigg)  \bigg|^2.
\end{equation*}
We first rewrite the exponential as an Euler product and show that 
\begin{equation}
\label{rewrite}
    \bigg|\exp\bigg( (k-1)\sum_{p\in \mathcal{P}_m} \frac{f(p)}{p^{1/2+it}}+\frac{f(p)^2}{2p^{1+2it} }\bigg)  \bigg|^2\leq e^{O(k^2)} \prod_{\substack{p\in \mathcal{P}_m \\ p\geq 10k^2}}\bigg| 1+(k-1)\frac{f(p)}{p^{1/2+it}}+\frac{k(k-1)}{2}\frac{f(p^2)}{p^{1+2it}}\bigg|^2.
\end{equation}
For small primes, we use the crude bound
\begin{equation*}
    \bigg|\exp\bigg( (k-1)\sum_{\substack{p\in \mathcal{P}_m \\ p\leq 10k^2}} \frac{f(p)}{p^{1/2+it}}+\frac{f(p)^2}{2p^{1+2it}}\bigg)  \bigg|^2=e^{O(k^2)}.
\end{equation*}
For any prime $p\geq 10k^2$, the Taylor series expansion of the exponential gives us
    \begin{equation*}
         \exp\bigg( (k-1)\Big(\frac{f(p)}{p^{1/2+it}}+\frac{f(p)^2}{2p^{1+2it}}\Big)\bigg)  = 1+(k-1)\frac{f(p)}{p^{1/2+it}}+\frac{k(k-1)}{2}\frac{f(p^2)}{p^{1+2it}}+O\Big(\frac{k^3}{p^{3/2} }\Big)
    \end{equation*}
    When $p\geq 10k^2$, we have
\begin{equation*}
   \bigg| 1+(k-1)\frac{f(p)}{p^{1/2+it}}+\frac{k(k-1)}{2}\frac{f(p^2)}{p^{1+2it}}\bigg|\geq \frac{1}{2},
\end{equation*}
therefore if $p\geq 10k^2$, then
\begin{equation*}
    \bigg| 1+(k-1)\frac{f(p)}{p^{1/2+it}}+\frac{k(k-1)}{2}\frac{f(p^2)}{p^{1+2it}}+O\Big(\frac{k^3}{p^{3/2} }\Big) \bigg|^2\leq  \bigg| 1+(k-1)\frac{f(p)}{p^{1/2+it}}+\frac{k(k-1)}{2}\frac{f(p^2)}{p^{1+2it}}\bigg|^2\Big( 1+O\Big(\frac{k^3}{p^{3/2}}\Big)\Big),
\end{equation*}
so \eqref{rewrite} follows.

By the orthogonality relation $\mathbb{E} f(n)\overline{f(m)}=\mathbf{1}(n=m)$, for any set of complex coefficients $(a_n)_{n\leq N}$, we have $\mathbb{E}\big| \sum_{n\leq N} a_nf(n)\big|^2=\sum_{n\leq N} |a_n|^2$. Hence, the expectation
\begin{equation*}
    \mathbb{E}\bigg|\sum_{\substack{l\leq x/n \\p|l \implies p\in \mathcal{P}_m}} f(l)\bigg|^2\prod_{\substack{p\in \mathcal{P}_m \\ p\geq 10k^2}}\bigg| 1+(k-1)\frac{f(p)}{p^{1/2+it}}+\frac{k(k-1)}{2}\frac{f(p^2)}{p^{1+2it}}\bigg|^2
\end{equation*}
is equal to
\begin{equation}
\label{square}
    \sum_{u}\bigg|\sum_{\substack{u=lv\\ l\leq x/n\\ p|l \implies p\in \mathcal{P}_m }}\frac{h(v)}{v^{1/2+it} }\bigg|^2,
\end{equation}
where $h$ is a multiplicative function defined as follows. If $p\in \mathcal{P}_m$ and $p\geq 10k^2$, then $h(p)=k-1$ and $h(p^2)=\frac{k(k-1)}{2}$, for any other prime power $h$ takes $0$. Note that if $v$ has a prime factor outside of $\mathcal{P}_m$, then $h(v)=0$. As $h(v)$ is non-negative, by the triangle inequality we may assume $t=0$. Dropping the condition $p|l \implies p\in \mathcal{P}_m, p\geq 10k^2$ and expanding the square, our expression is at most
\begin{equation*}
    \sum_{\substack{l_1v_1=l_2v_2 \\ l_1,l_2\leq x/n }}\frac{h(v_1)h(v_2)}{(v_1v_2)^{1/2} }=\sum_{v_1,v_2}\frac{h(v_1)h(v_2)}{(v_1v_2)^{1/2} }\sum_{\substack{l_1,l_2\leq x/n \\ \frac{l_1}{l_2}=\frac{v_2}{v_1} } }1
\end{equation*}
Let $d=(v_1,v_2)$, $v_1=dw_1$, $v_2=dw_2$. Since $\frac{v_2}{v_1}=\frac{w_2}{w_1}$, where the latter is a reduced fraction, we have
\begin{equation*}
    \sum_{\substack{l_1,l_2\leq x/n \\ \frac{l_1}{l_2}=\frac{v_2}{v_1} } }1=\Big\lfloor \frac{x}{n\max(w_1,w_2)}\Big\rfloor.
\end{equation*}
Noting that $h(ab)\leq h(a)h(b)$ and by symmetry we may assume that $w_1\geq w_2$ at the cost of a factor of 2, our expression is at most 
\begin{equation*}
    2\sum_{\substack{d,w_1,w_2 \\ w_1\geq w_2}}\frac{h(d)^2h(w_1)h(w_2)}{d(w_1w_2)^{1/2} }\frac{x}{n\max(w_1,w_2)}=\frac{2x}{n}\sum_{d,w_1}\frac{h(d)^2h(w_1)}{dw_1^{3/2} }\sum_{w_2\leq w_1}\frac{h(w_2)}{w_2^{1/2}}.
\end{equation*}
We apply Theorem 2.14 and Corollary 2.15 of \cite{montgomery2007multiplicative}, with  $A:=k^2$, say, we obtain
\begin{equation*}
\begin{split}
    \sum_{w_2\leq w_1} h(w_2)\ll & k^2\frac{w_1}{\log w_1} \prod_{\substack{ p\leq w_1 \\p\in \mathcal{P}_m} }\bigg(1+\frac{k-1}{p}+\frac{k(k-1)}{2p^2}\bigg) \\
    \leq & k^2\frac{w_1}{\log w_1} \prod_{\substack{ p\leq w_1 \\p\leq y }}\bigg(1+\frac{k-1}{p}+\frac{k(k-1)}{2p^2}\bigg) \\
    \leq & e^{O(k^2)}  \frac{w_1}{\log w_1} \big( \log \min(w_1,y)\big)^{k-1} \\
    \leq & e^{O(k^2)}w_1(\log y)^{k-2}. \\
\end{split}
\end{equation*}
By partial summation
\begin{equation*}
    \sum_{w_2\leq w_1} \frac{h(w_2)}{w_2^{1/2}}=\frac{1}{w_1^{1/2}}\sum_{w_2\leq w_1} h(w_2)+\frac{1}{2}\int_1^{w_1} \bigg(\sum_{w_2\leq t}h(w_2)\bigg)\frac{dt}{t^{3/2} }\leq e^{O(k^2)}w_1^{1/2}(\log y)^{k-2}.
\end{equation*}
We have
\begin{equation*}
    \sum_{d}\frac{h(d)^2}{d}\leq \prod_{p\leq y }\bigg(1+\frac{(k-1)^2}{p}+\frac{k^2(k-1)^2}{p^2}\bigg)\leq e^{O(k^4)}(\log y )^{(k-1)^2},
\end{equation*}
and similarly
\begin{equation*}
    \sum_{w_1}\frac{h(w_1)}{w_1}\leq \prod_{p\leq y}\bigg(1+\frac{k-1}{p}+\frac{k(k-1)}{2p^2}\bigg)\leq e^{O(k^2)}(\log y)^{k-1}
\end{equation*}
We have established that \eqref{square} is at most
\begin{equation*}
   \frac{x}{n} e^{O(k^4)}(\log y)^{k^2-2}.
\end{equation*}
Thus
\begin{equation*}
    \sum_{\substack{n\leq x \\p|n \implies p\not\in \mathcal{P}_m}}\Tilde{d}(n) \mathbb{E}\bigg|\sum_{\substack{l\leq x/n \\p|l \implies p\in \mathcal{P}_m}} f(l)\bigg|^2 \bigg|\exp\bigg( (k-1)\sum_{p\in \mathcal{P}_m} \frac{f(p)}{p^{1/2}}+\frac{f(p)^2}{2p}\bigg)  \bigg|^2\leq e^{O(k^4)}x(\log y)^{k^2-2}\sum_{\substack{n\leq x \\p|n \implies p\not\in \mathcal{P}_m}}\frac{\Tilde{d}(n)}{n}.
\end{equation*}
Finally, recalling that $\Tilde{d}(n)=\sum_{d|n} \mathbf{1}(p|d \implies p\in I_m)$ and that $p\leq x, p\not \in \mathcal{P}_m$ implies $p\in I_m$ or $p\in [y,x]$, we get
\begin{equation*}
    \sum_{\substack{n\leq x \\p|n \implies p\not\in \mathcal{P}_m}}\frac{\Tilde{d}(n)}{n}=\sum_{\substack{d:\\ p|d\implies p\in I_m} }\frac{1}{d}\sum_{\substack{n\leq x/d \\ p|n\implies p\not\in \mathcal{P}_m }}\frac{1}{n}\leq\prod_{p\in I_m}\Big(1-\frac{1}{p}\Big)^{-2}\prod_{y\leq p\leq x}\Big(1-\frac{1}{p}\Big)^{-1},
\end{equation*}
from which the lemma follows since $\prod_{y\leq p\leq x}\big(1-\frac{1}{p}\big)^{-1}\ll \frac{\log x}{\log y}$.
\end{proof}
Let us turn to the proof of Proposition \ref{p4}. Fix any $1\leq m \leq M$ and $|l|\leq \frac{\log y}{2}$. Recall that
\begin{equation*}
\text{Err}_{m,l}(f)=\sum_{\substack{j_1,j_2\geq 0\\ \max(j_1,j_2)>J_m }} \frac{(k-1)^{j_1+j_2}}{j_1!j_2!}(\Re D_{m,l}(f))^{j_1+j_2}.
\end{equation*}
For simplicity, let us denote
\begin{equation*}
    K_{m,l}(f)= \bigg|\sum_{n\leq x}f(n)\bigg|^2\exp\Big(2(k-1)\sum_{\substack{m'=1\\ m'\neq m} }^M\Re D_{m',l}(f)\Big)=\bigg|\sum_{n\leq x}f(n)\bigg|^2 \bigg|\exp\bigg( (k-1)\sum_{p\in \mathcal{P}_m} \frac{f(p)}{p^{1/2+il/\log y}}+\frac{f(p)^2}{2p^{1+2il/\log y} }\bigg)  \bigg|^2.
\end{equation*}
\newpage
In Lemma \ref{hardestlemma} we have shown that for any integer $j\geq 0$ we have
\begin{equation*}
     \mathbb{E}  K_{m,l}(f)|\Re D_{m,l}(f)|^{2j}\leq  e^{O(k^4)}x(\log y)^{k^2-2}j! A_m^{j+1/2} \prod_{p\in I_m}\bigg(1-\frac{1}{p}\bigg)^{-2}\frac{\log x}{\log y},
\end{equation*}
where we denoted $A_m=\sum_{p\in I_m}\big(\frac{2}{p}+\frac{3}{p^2}\big)$ (one should think of $A_m$ as a small quantity). Unfortunately, $\text{Err}_{m,l}(f)$ involves odd powers of $\Re D_{m,l}(f)$, however applying the Cauchy-Schwarz inequality, for any integer $j\geq 0$ we have
\begin{equation*}
\begin{split}
    \mathbb{E}  K_{m,l}(f)|\Re D_{m,l}(f)|^{2j+1}\leq & ( \mathbb{E}  K_{m,l}(f)|\Re D_{m,l}(f)|^{2j})^{1/2}( \mathbb{E}  K_{m,l}(f)|\Re D_{m,l}(f)|^{2(j+1)})^{1/2}  \\
    \leq &  e^{O(k^4)}x(\log y)^{k^2-2}(j+1)! A_m^{j+1/2} \prod_{p\in I_m}\bigg(1-\frac{1}{p}\bigg)^{-2}\frac{\log x}{\log y},
\end{split}
\end{equation*}
which gives us
\begin{equation*}
\begin{split}
    &\mathbb{E} K_{m,l}(f) |\text{Err}_{m,l}(f)| \\
    \leq & e^{O(k^4)}x(\log y)^{k^2-2} \frac{\log x}{\log y}\prod_{p\in I_m}\bigg(1-\frac{1}{p}\bigg)^{-2} \sum_{\substack{j_1,j_2\geq 0\\ \max(j_1,j_2)>J_m }} \frac{(k-1)^{j_1+j_2}}{j_1!j_2!}\Big\lceil \frac{j_1+j_2}{2} \Big\rceil !  A_m^{\frac{j_1+j_2}{2}}. \\
\end{split}
\end{equation*}
Note that if $m=1$, then $\prod_{p\in I_m}\big(1-\frac{1}{p}\big)^{-2}\ll (\log \log y)^2\leq e^{J_1}$, if $m\geq 2$, then $\prod_{p\in I_m}\big(1-\frac{1}{p}\big)^{-2}\leq 100\leq e^{J_M}\leq e^{J_m}$ so it is enough to show that the inner sum is at most $e^{-2J_m}$ (note that we have a lot of room to spare with these estimates).
By symmetry, we may assume that $j_1\geq j_2$ at the cost of a factor of 2. This implies that $\big\lceil \frac{j_1+j_2}{2} \big\rceil ! \leq j_1^{\frac{j_1+j_2}{2}}$, so we get 
\begin{equation*}
    \sum_{\substack{j_1,j_2\\ \max(j_1,j_2)>J_m }} \frac{(k-1)^{j_1+j_2}}{j_1!j_2!}\Big\lceil \frac{j_1+j_2}{2} \Big\rceil ! A_m^{\frac{j_1+j_2}{2}} \leq 2\sum_{j_1>J_m} \frac{(k-1)^{j_1} }{j_1!} (A_mj_1)^{j_1/2}\sum_{j_2\leq j_1}^{\infty} \frac{(k-1)^{j_2}}{j_2!} (A_mj_1)^{j_2/2}
\end{equation*}
Extending the inner sum to $\sum_{j_2=0}^{\infty}$, this is at most
\begin{equation*}
    2\sum_{j_1>J_m} \frac{(k-1)^{j_1} }{j_1!} (A_mj_1)^{j_1/2} \exp\big( (k-1)(A_mj_1)^{1/2}\big).
\end{equation*}
We claim that
\begin{equation*}
    10^4(k-1)^2A_m\leq J_m.
\end{equation*}
Indeed, if $m=1$, then $A_m\asymp \log \log y$ and $J_1=(\log \log y)^{3/2}$. For $m\geq 2$ we note that $A_m\leq 10$ and $J_m\geq J_M\geq 10^{10}k^2$.

Therefore $\exp\big( (k-1)(A_mj_1)^{1/2}\big)\leq e^{j_1}$. Using $\frac{1}{j_1!}\leq \big( \frac{e}{j_1}\big)^{j_1}$, we arrive at the upper bound
\begin{equation*}
    2\sum_{j_1>J_m} \Big(\frac{e^2(k-1)A_m^{1/2}}{ j_1^{1/2}}\Big)^{j_1}.
\end{equation*}

Again, using $10^4(k-1)^2A_m\leq J_m<j_1$, this is at most
\begin{equation*}
    2\sum_{j_1>J_m} 10^{-j_1}\leq e^{-2J_m},
\end{equation*}

which is what we wanted to show.

\section{Upper bound}
This section is devoted to the proof of Proposition \ref{p2}, in fact we will prove the slightly stronger bound
    \begin{equation*}
        \frac{1}{\phi(q)}\sum_{\chi\,\text{mod}\, q} R(\chi)^{\frac{k}{k-1}}\ll_k (\log y)^{k^2+1}.
    \end{equation*}

Recall that $R(\chi)$ is the sum of (short) Dirichlet polynomials and we raise it to the non-integer $\frac{k}{k-1}$-th power, so we cannot immediately replace this by the corresponding random multiplicative function quantity. However, we are able to replace the fractional power by a genuine Dirichlet polynomial whose size is (up to a small multiplicative error) larger, and then use the random multiplicative function correspondence.  

Before we do that, we first apply the inequality
\begin{equation*}
    (x_1+\cdots + x_M)^{\frac{1}{k-1}}\leq x_1^{\frac{1}{k-1}}+\cdots+ x_M^{\frac{1}{k-1}},\;\;\; \text{ where } k\geq 2 \text{ and } x_1,\ldots, x_M\geq 0.
\end{equation*}
\newpage
 Note that it is crucial that $k\geq 2$, so $\frac{1}{k-1}\leq 1$. Note that $\frac{k}{k-1}=1+\frac{1}{k-1}$, so this inequality implies
\begin{equation}
\label{höldi}
\begin{split}
      R(\chi)^{\frac{k}{k-1}}=\bigg( \sum_{|l|\leq (\log y)/2} \prod_{m=1}^M R_{m,l}(\chi)\bigg)^{\frac{k}{k-1}}\leq & \bigg( \sum_{|l_1|\leq (\log y)/2}\prod_{m=1}^M R_{m,l_1}(\chi) \bigg)\bigg( \sum_{|l_2|\leq (\log y)/2}\prod_{m=1}^M R_{m,l_2}(\chi) \bigg)^{\frac{1}{k-1}} \\
      \leq & \sum_{|l_1|,|l_2|\leq (\log y) /2} \prod_{m=1}^M R_{m,l_1}(\chi)R_{m,l_2}(\chi)^{\frac{1}{k-1}}. \\
\end{split}
\end{equation}

Note that this step of the argument is inspired by the proof of Key Proposition 1 in \cite{harper2020moments}.
From now on (until the very end), we fix the values of $l_1$ and $l_2$ and aim to arrive at the bound
\begin{equation*}
    \frac{1}{\phi(q)}\sum_{\chi\,\text{mod}\, q}\prod_{m=1}^M R_{m,l_1}(\chi)R_{m,l_2}(\chi)^{\frac{1}{k-1}}\ll \frac{(\log y)^{k^2} }{|l_1-l_2|^{2(k-1)} +1}.
\end{equation*}

Summing over $l_1$ and $l_2$, noting that $k-1\geq 1$, we immediately get the proposition. Note that we get a stronger upper bound as $|l_1-l_2|$ gets larger, and this is the part of the argument where it becomes important that $R(\chi)$ is the sum of a Dirichlet polynomial at the shifts $\{1/2+il/\log y: |l|\leq \log y/2 \}$ and not just at one point.

We now want to replace $R_{m,l_2}(\chi)^{\frac{1}{k-1}}$ by a Dirichlet polynomial that has roughly the same size and also short enough that we can switch to the corresponding random multiplicative quantities. It turns out that the shape of this polynomial depends on the size of $\Re D_{m,l_2}(\chi)$, therefore we partition the set of characters into subclasses according to these sizes.

For any $1\leq m\leq M$, we partition $[0,\infty)$ into intervals $(I^{(m)}_n)_{n\geq 0}$, where $I^{(m)}_0=[0, \frac{J_m}{100k}] $, and for any $n\geq 1$ we define the dyadic interval $I^{(m)}_n=\frac{J_m}{100k}\cdot [2^{n-1}, 2^n]$. We let
\begin{equation*}
    \mathcal{X}(n_1,\ldots, n_M)=\{ \chi\in \mathcal{X}_q : \forall 1\leq m\leq M \,\,  |\Re D_{m,l_2}(\chi)|\in I^{(m)}_{n_m}\}.
\end{equation*}

Since the `expected value' of $|\Re D_{m,l_2}(\chi)|$ is much less than $\frac{J_m}{100k}$, most characters lie in $\mathcal{X}(0,\ldots, 0)$ and we expect to get the main contribution to our sum from here.

Let us now fix non-negative integers $n_1,\ldots n_M$ and consider $\chi\in \mathcal{X}(n_1,\ldots, n_M)$. For simplicity let us denote $W_m=\inf I^{(m)}_{n_m}$.
Our quantity that `replaces' $R_{m,l_2}(\chi)^{\frac{1}{k-1}}$ is defined as follows. Let $a_m= 2\lceil 200kJ_m \rceil$.
\begin{equation*}
     U_{m,l_2}(\chi)=
         \begin{cases}
             \Big( \sum_{j=0}^{J_m}\frac{1}{j!}\big( \Re D_{m,l_2}(\chi)\big)^j\Big)^2 & \text{ if }  n_m=0, \\
             e^{4 W_m}|D_{m,l_2}(\chi)W_m^{-1}|^{a_m}  & \text{ if } J_m/100k\leq W_m \leq 100kJ_m, \\
             \Big(2\frac{ (k-1)^{J_m} }{J_m!} (2W_m)^{J_m}\Big)^{\frac{2}{k-1} } |D_{m,l_2}(\chi) W_m^{-1}|^{a_m} & \text{ if } 100kJ_m\leq W_m.\\
         \end{cases}
\end{equation*}
\begin{remark}
    Note that $U_{m,l_2}(\chi)$ is non-negative. When $n_m=0$, i.e. $|\Re D_{m,l_2}(\chi)|$ is its `typical' size, both $R_{m,l_2}(\chi)^{\frac{1}{k-1}}$ and $U_{m,l_2}(\chi)$ are truncated versions of the same exponential, and since we take enough terms in the truncation they are close.
    
    When $n_m\geq 1$, $|D_{m,l_2}(\chi) W_m^{-1}|^{a_m}$ is a penalising term inserted to detect the fact that $n_m\geq 1$ is rare. It turns out that $W_m^{-a_m}$ will outweigh all the other terms in our calculation so an appropriate saving will be obtained.
\end{remark}

In the next lemma we show that $U_{m,l_2}(\chi)$ is larger than $R_{m,l_2}(\chi)^{\frac{1}{k-1}}$ up to a small multiplicative error.
\begin{lemma}
\label{easy}
For any $\chi$ mod $q$ and $1\leq m\leq M$, we have
\begin{equation*}
       R_{m,l_2}(\chi)^{\frac{1}{k-1}} \leq \big(1+O(e^{-J_m})\big)U_{m,l_2}(\chi).
\end{equation*}
\end{lemma}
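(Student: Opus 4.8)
### Proof Proposal for Lemma \ref{easy}

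The plan is to verify the inequality case by case according to the three branches in the definition of $U_{m,l_2}(\chi)$, keeping in mind that the interval $I^{(m)}_{n_m}$ containing $|\Re D_{m,l_2}(\chi)|$ determines which branch applies. Throughout I write $D = D_{m,l_2}(\chi)$, $J = J_m$, $a = a_m = 2\lceil 200kJ\rceil$, and $W = W_m = \inf I^{(m)}_{n_m}$. Recall $R_{m,l_2}(\chi) = \big(\sum_{j=0}^{J} \frac{(k-1)^j}{j!}(\Re D)^j\big)^2$, so $R_{m,l_2}(\chi)^{1/(k-1)} = \big(\sum_{j=0}^{J}\frac{(k-1)^j}{j!}(\Re D)^j\big)^{2/(k-1)}$, and the quantity we must dominate is the $2/(k-1)$-th power of a truncated exponential series evaluated at $\Re D$.

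First I would handle the case $n_m = 0$, i.e. $|\Re D| \leq W_1 := J/(100k)$. Here $U_{m,l_2}(\chi) = \big(\sum_{j=0}^{J}\frac{1}{j!}(\Re D)^j\big)^2$, the \emph{untwisted} truncated exponential. I would compare the two truncated series termwise: since $k \geq 2$ we have $(k-1)^j \geq$ something controlled, but more to the point $R_{m,l_2}(\chi)^{1/(k-1)}$ is a power less than or equal to $1$ of the $(k-1)$-twisted series. The cleanest route is to bound both sides in terms of $\exp$: one shows $\sum_{j=0}^{J}\frac{(k-1)^j}{j!}(\Re D)^j \leq e^{(k-1)\Re D}$ when $\Re D \geq 0$ (partial sum of a series with nonnegative terms), and when $\Re D < 0$ one uses that the truncation error in an alternating-type series with $|\Re D| \leq J/(100k)$ is $O(e^{-J})$ relative to the full exponential $e^{(k-1)\Re D}$ — this uses $J$ large (guaranteed by $J_M \geq \exp(10^4k^2)$) so the tail $\sum_{j>J}\frac{((k-1)|\Re D|)^j}{j!}$ is negligible by Stirling since $(k-1)|\Re D| \leq J/100 \ll J$. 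Hence $R_{m,l_2}(\chi)^{1/(k-1)} \leq (1+O(e^{-J})) e^{2\Re D}$. A symmetric argument from below shows $\sum_{j=0}^J \frac{1}{j!}(\Re D)^j \geq (1-O(e^{-J}))e^{\Re D}$, hence $U_{m,l_2}(\chi) \geq (1-O(e^{-J}))e^{2\Re D}$, and combining gives the claim in this case.

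Next, the two cases with $n_m \geq 1$. Here $|\Re D| \geq W_m/2$ is large (at least $J/(200k)$), so the truncated series $\sum_{j=0}^{J}\frac{(k-1)^j}{j!}(\Re D)^j$ is dominated by its top term up to a geometric-series factor: since the ratio of consecutive terms is $(k-1)|\Re D|/j \geq (k-1)W_m/(2J) \geq$ a large constant (because $W_m \geq J/(100k)$ forces this ratio $\geq (k-1)/(200k)\cdot\ldots$ — actually one wants $W_m \gg kJ$ here; in the middle case $W_m \leq 100kJ$ so I should instead bound crudely $\sum_{j=0}^J \frac{(k-1)^j}{j!}|\Re D|^j \leq (J+1)\cdot \frac{(k-1)^J}{J!}(2W_m)^J \leq e^{4W_m}$-ish after absorbing), I would bound $R_{m,l_2}(\chi) \leq \big((J+1)\frac{(k-1)^J}{J!}|\Re D|^J\big)^2$ and then raise to the $1/(k-1)$ power. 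Then I use $|\Re D| \leq |D|$ and $|D| = |D \cdot W_m^{-1}| \cdot W_m \leq |D W_m^{-1}|^{a_m/(2J)\cdot(2J/a_m)}\cdot W_m$ — more simply, since $a_m \geq 200kJ \geq 2J$ and we expect $|DW_m^{-1}|$ could be small or large, I absorb $|\Re D|^{2J/(k-1)} \leq (|DW_m^{-1}| W_m)^{2J/(k-1)} \leq |DW_m^{-1}|^{a_m} W_m^{2J/(k-1)} $ when $|DW_m^{-1}| \geq 1$, and when $|DW_m^{-1}| < 1$ we have $|\Re D| < W_m \leq W_1$ contradicting $n_m \geq 1$ — wait, $n_m \geq 1$ means $|\Re D| \geq W_m$, so indeed $|DW_m^{-1}| \geq |\Re D| W_m^{-1} \geq 1$ always in this regime, which makes $|DW_m^{-1}|^{a_m} \geq |DW_m^{-1}|^{2J/(k-1)} \cdot (\text{something} \geq |\Re D|^{2J/(k-1)} W_m^{-2J/(k-1)})$ legitimate. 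Combining the constant prefactor $\big((J+1)\frac{(k-1)^J}{J!}\big)^{2/(k-1)}$ with the $W_m^{2J/(k-1)}$ and Stirling-estimating $\frac{(k-1)^J}{J!} \asymp (e(k-1)/J)^J/\sqrt{J}$, one checks in the middle case $J/(100k)\leq W_m \leq 100kJ$ that this prefactor is at most $e^{4W_m}$ (using $W_m \geq J/(100k)$ to absorb the $(\log J)$-type and $k$-dependent factors, with room to spare), and in the large case $W_m \geq 100kJ$ it is at most $\big(2\frac{(k-1)^J}{J!}(2W_m)^J\big)^{2/(k-1)}$ by design of that branch. This matches $U_{m,l_2}(\chi)$ exactly up to the $(1+O(e^{-J}))$ slack coming from the $(J+1)$ and the $2$ versus the exact top term.

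The main obstacle I anticipate is the bookkeeping in the middle case $J/(100k) \leq W_m \leq 100kJ$: here $W_m$ and $J$ are comparable (up to a factor $100k$), so the exponential-type bound $e^{4W_m}$ must be shown to dominate the genuinely polynomial-in-$J$ object $\big((J+1)\frac{(k-1)^J}{J!}(2W_m)^J\big)^{2/(k-1)}$. By Stirling $\frac{(k-1)^J}{J!}(2W_m)^J \leq (2e(k-1)W_m/J)^J$, and since $W_m/J \in [1/(100k), 100k]$ this is at most $(200ek^2)^J = e^{J\log(200ek^2)}$, and then the $2/(k-1) \leq 2$ power gives $e^{2J\log(200ek^2)}$; one needs $2J\log(200ek^2) \leq 4W_m$, which since $W_m \geq J/(100k)$ requires $4W_m \geq 4J/(100k) \geq 2J\log(200ek^2)$, i.e. $J/(50k) \geq 2J\log(200ek^2)$ — this \emph{fails} for large $k$! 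So I will need the intended comparison to go the other way: the point must be that in the middle case the relevant bound on $R_{m,l_2}(\chi)$ is not via the crude $(e(k-1)W_m/J)^J$ but rather genuinely $\leq e^{2(k-1)|\Re D|} \leq e^{2(k-1)\cdot 2W_m}\cdot 2^{n_m\cdot(\ldots)}$... Actually the correct reading is that $R_{m,l_2}(\chi) = (\sum_{j\leq J} \frac{(k-1)^j}{j!}(\Re D)^j)^2 \leq e^{2(k-1)\Re D} \leq e^{2(k-1)|\Re D|}$ when $\Re D \geq 0$, and $|\Re D| \leq |I^{(m)}_{n_m}| $'s sup $= 2W_m$, giving $R_{m,l_2}(\chi)^{1/(k-1)} \leq e^{4W_m}$ directly, and the penalising factor $|DW_m^{-1}|^{a_m} \geq 1$ then makes $U_{m,l_2}(\chi) \geq e^{4W_m} \geq R_{m,l_2}(\chi)^{1/(k-1)}$ with no error term even needed (when $\Re D < 0$, $R_{m,l_2}(\chi)$ is even smaller once $J$ is large, again by the alternating-tail estimate). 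This is the right approach, and the analogous clean bound handles the large case using $e^{2(k-1)\Re D} \leq \big(\frac{(k-1)^J}{J!}(2W_m)^J\big)^2 \cdot(\ldots)$ — I will verify the transition point $W_m = 100kJ$ is chosen precisely so that $e^{2(k-1)W_m}$ and $\big(\frac{(k-1)^J}{J!}(2W_m)^J\big)^2$ cross over. So the real obstacle is just carefully matching the exact constants in each branch's prefactor to the elementary upper bound on the truncated exponential, with the $O(e^{-J})$ absorbing all the slack (the $(J+1)$ factors, the alternating-series tails when $\Re D < 0$, and the rounding in $a_m = 2\lceil 200kJ\rceil$); no deep input is required beyond Stirling's formula and the partial-sum bound for $e^x$.
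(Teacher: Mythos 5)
Your treatment of the first two branches is, after your own mid-course correction, essentially the paper's argument: for $n_m=0$ one uses $|\Re D_{m,l_2}(\chi)|\leq J_m/(100k)$ to see that both truncated series equal their exponentials up to a factor $1+O(e^{-J_m})$, so $R_{m,l_2}(\chi)^{1/(k-1)}$ and $U_{m,l_2}(\chi)$ are both $e^{2\Re D_{m,l_2}(\chi)}(1+O(e^{-J_m}))$; and in the range $J_m/100k\leq W_m\leq 100kJ_m$ one bounds $R_{m,l_2}(\chi)^{1/(k-1)}\leq\big(\sum_{j\geq 0}\frac{(k-1)^j}{j!}(2W_m)^j\big)^{2/(k-1)}=e^{4W_m}$ (using $|\Re D_{m,l_2}(\chi)|\leq 2W_m$) and notes the penalising factor satisfies $|D_{m,l_2}(\chi)W_m^{-1}|^{a_m}\geq 1$.

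The third branch $W_m\geq 100kJ_m$, however, contains a genuine flaw as written. Your displayed comparison $e^{2(k-1)\Re D}\leq\big(\frac{(k-1)^{J_m}}{J_m!}(2W_m)^{J_m}\big)^2\cdot(\ldots)$ goes the wrong way: in this regime the full exponential is far larger than the truncated sum, which is exactly why this branch of $U_{m,l_2}$ carries a different, much smaller prefactor, so bounding $R$ by $e^{4(k-1)W_m}$ cannot work here. Your fallback bound $R_{m,l_2}(\chi)\leq\big((J_m+1)\frac{(k-1)^{J_m}}{J_m!}(2W_m)^{J_m}\big)^2$ leaves, after taking the $1/(k-1)$ power, a surplus factor $((J_m+1)/2)^{2/(k-1)}$ relative to the prefactor $\big(2\frac{(k-1)^{J_m}}{J_m!}(2W_m)^{J_m}\big)^{2/(k-1)}$ in the definition of $U_{m,l_2}$; this factor is polynomial in $J_m$, not $1+O(e^{-J_m})$, and since all one knows about the penalising term is $|D_{m,l_2}(\chi)W_m^{-1}|^{a_m}\geq 1$, it cannot be absorbed there either, contrary to your closing claim that the $(J_m+1)$ factors are swallowed by the $O(e^{-J_m})$ slack. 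The missing (and paper's) step, which you gesture at with the consecutive-ratio remark but do not carry out, is that for $W_m\geq 100kJ_m$ each term of $\sum_{j\leq J_m}\frac{(k-1)^j}{j!}(2W_m)^j$ is at least twice the preceding one, so the sum is at most $2\frac{(k-1)^{J_m}}{J_m!}(2W_m)^{J_m}$ --- precisely the constant $2$ built into that branch of $U_{m,l_2}$ --- after which $|D_{m,l_2}(\chi)W_m^{-1}|^{a_m}\geq 1$ finishes the case with no $e^{-J_m}$ slack needed at all (and the rounding in $a_m$ needs no absorbing: its only role here is as an exponent applied to a quantity that is at least $1$).
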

\begin{proof}
First, assume that $n_m=0$, i,e, $|\Re D_{m,l_2}(\chi)|\leq \frac{J_m}{100k}$.  We then have
\begin{equation*}
    R_{m,l_2}(\chi)=\Big(\sum_{j=0}^{J_m}\frac{(k-1)^j}{j!}(\Re D_{m,l_2}(\chi))^j\Big)^2=e^{2(k-1)\Re D_{m,l_2}(\chi)}\big(1+O(e^{-J_m})\big).
\end{equation*}
\newpage
On the other hand
\begin{equation*}
    U_{m,l_2}(\chi)=\Big( \sum_{j=0}^{J_m}\frac{1}{j!}\big( \Re D_{m,l_2}(\chi)\big)^j\Big)^2=e^{2\Re D_{m,l_2}(\chi)}\big(1+O(e^{-J_m})\big),
\end{equation*}
from which the lemma follows. 

\noindent Let us now handle the case $n_m\geq 1$. By definition $W_m\leq |\Re D_{m,l_2}(\chi)|\leq 2W_m$, so
\begin{equation*}
    R_{m,l_2}(\chi)^{\frac{1}{k-1}}\leq \Big(\sum_{j=0}^{\infty}\frac{(k-1)^j}{j!} (2W_m)^j\Big)^{\frac{2}{k-1}}= e^{4W_m},
\end{equation*}
and also $|\Re D_{m,l_2}(\chi) W_m^{-1}|\geq 1$. So if $\frac{J_m}{100k}\leq W_m\leq 100kJ_m$ we are done.

\noindent When $ W_m \geq 100kJ_m$, we start with the trivial bound
\begin{equation*}
     R_{m,l_2}(\chi)\leq \Big(\sum_{j=0}^{J_m}\frac{(k-1)^j}{j!}(2W_m)^j\Big)^2,
\end{equation*}
moreover each term in the sum is at least twice as big as the previous one, hence 
\begin{equation*}
    \sum_{j=0}^{J_m}\frac{(k-1)^j}{j!}(2W_m)^j\leq 2\frac{(k-1)^{J_m} }{J_m!}(2W_m)^{J_m},
\end{equation*}
from which the lemma follows.
\end{proof}
\begin{proposition}
\label{big_upper}
    Fix non-negative integers $n_1,\ldots n_M$. We then have
\begin{equation*}
    \frac{1}{\phi(q)}\sum_{\chi\in \mathcal{X}(n_1,\ldots,n_M)} \prod_{m=1}^M R_{m,l_1}(\chi)U_{m,l_2}(\chi)\ll \frac{(\log y)^{k^2}}{|l_1-l_2|^{2(k-1)}+1} \prod_{m=1}^M (\inf I^{(m)}_{n_m}+1)^{-2}.
\end{equation*}
\end{proposition}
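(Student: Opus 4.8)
The plan is to pass to the Steinhaus model, exploit the independence of the blocks $I_m=(y_{m-1},y_m]$ so that the left side factors over $m$, and then estimate each one‑block expectation separately according as $n_m=0$ or $n_m\ge 1$, the point being that when $n_m\ge 1$ the penalising power $a_m$ in $U_{m,l_2}$ produces the extra saving $(\inf I^{(m)}_{n_m}+1)^{-2}$.

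First I would drop the constraint $\chi\in\mathcal X(n_1,\dots,n_M)$: since $R_{m,l_1}(\chi)U_{m,l_2}(\chi)\ge 0$ for every $\chi$ (the prefactors in $U_{m,l_2}$ are non‑negative), enlarging to the sum over all $\chi\bmod q$ only increases it. Expanding $\prod_m R_{m,l_1}(\chi)U_{m,l_2}(\chi)$ into monomials $\chi(a)\overline{\chi(b)}$ — using that $a_m=2\lceil 200kJ_m\rceil$ is even, so $|D_{m,l_2}(\chi)|^{a_m}=D_{m,l_2}(\chi)^{a_m/2}\overline{D_{m,l_2}(\chi)}^{a_m/2}$ — the definitions of $R_{m,l},U_{m,l}$ together with \eqref{shortpolynomial} show that every product $ab$ that occurs is at most $\prod_m y_m^{10^4kJ_m}<x\le q$, so the orthogonality relation for characters collapses the sum to the diagonal and yields exactly the analogous expansion for $f$; hence $\frac1{\phi(q)}\sum_{\chi\bmod q}\prod_m R_{m,l_1}(\chi)U_{m,l_2}(\chi)=\mathbb E\prod_{m=1}^M R_{m,l_1}(f)U_{m,l_2}(f)$. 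Because $D_{m,l}(f)$ depends only on the independent family $(f(p))_{p\in I_m}$ and the $I_m$ are disjoint, this factors as $\prod_{m=1}^M\mathbb E\big[R_{m,l_1}(f)U_{m,l_2}(f)\big]$. For the blocks with $n_m=0$ I would prove $\mathbb E[R_{m,l_1}(f)U_{m,l_2}(f)]\le(1+O(e^{-J_m/4}))\,\mathrm{main}_m$, where $\mathrm{main}_m:=\mathbb E\,e^{2(k-1)\Re D_{m,l_1}(f)+2\Re D_{m,l_2}(f)}$. Writing $R_{m,l_1}(f)=e^{2(k-1)\Re D_{m,l_1}(f)}-\mathrm{Err}_{m,l_1}(f)$ and $U_{m,l_2}(f)=e^{2\Re D_{m,l_2}(f)}-\mathrm{Err}'_{m,l_2}(f)$ (the second error being the sum defining $\mathrm{Err}_{m,l_2}$ with $k-1$ replaced by $1$), multiplying out, and bounding the three cross terms $\mathbb E[e^{2(k-1)\Re D_{m,l_1}}|\mathrm{Err}'_{m,l_2}|]$, $\mathbb E[e^{2\Re D_{m,l_2}}|\mathrm{Err}_{m,l_1}|]$, $\mathbb E[|\mathrm{Err}_{m,l_1}||\mathrm{Err}'_{m,l_2}|]$, this is essentially a rerun of the proof of Proposition \ref{p4}: one rewrites $e^{2(k-1)\Re D_{m,l_1}(f)}$, $e^{2\Re D_{m,l_2}(f)}$ as $|1-f(p)/p^{1/2+\cdot}|$–Euler products over $I_m$, applies Lemma \ref{evenmoment} (with a Cauchy--Schwarz step to turn odd powers of $\Re D$ into even ones), and sums the resulting tail using $10^4(k-1)^2\max(B_m,1)\le J_m$, $B_m:=\sum_{p\in I_m}1/p$ (immediate for $2\le m\le M$ from $J_m\ge J_M\ge\exp(10^4k^2)$, and for $m=1$ from $J_1=(\log\log y)^{3/2}$, $B_1\asymp\log\log y$ once $q$ is large in terms of $k$); the extra factor $\exp(O(k^2)+2(k-1)^2B_m)$ that appears is $\le e^{J_m/4}$ by the same inequalities, and $\mathrm{main}_m\ge 1$ by Jensen since $\mathbb Ef(p)=\mathbb Ef(p)^2=0$.

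The heart of the matter is the blocks with $n_m\ge1$, where I would show $\mathbb E[R_{m,l_1}(f)U_{m,l_2}(f)]\le(W_m+1)^{-2}e^{-J_m}$, $W_m:=\inf I^{(m)}_{n_m}$. Here $U_{m,l_2}(f)$ is a non‑negative constant times $W_m^{-a_m}|D_{m,l_2}(f)|^{a_m}$ (the constant being $e^{4W_m}$ when $W_m\le 100kJ_m$ and $\bigl(2\tfrac{(k-1)^{J_m}}{J_m!}(2W_m)^{J_m}\bigr)^{2/(k-1)}$ when $W_m>100kJ_m$, the latter bounded by $4(2e(k-1)W_m/J_m)^{2J_m/(k-1)}$), while $R_{m,l_1}(f)\le(J_m+1)\sum_{j\le J_m}\tfrac{(k-1)^{2j}}{(j!)^2}|D_{m,l_1}(f)|^{2j}$ by Cauchy--Schwarz in its defining sum. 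So it reduces to the joint moments $\mathbb E|D_{m,l_1}(f)|^{2j}|D_{m,l_2}(f)|^{a_m}$, which by Hölder with exponents $\tfrac{j+a_m/2}{j},\tfrac{j+a_m/2}{a_m/2}$ and Lemma \ref{evenmoment} are $\ll(j+\tfrac{a_m}{2})!\,(3B_m)^{\,j+a_m/2}$; performing the $j$–sum gives $\ll(J_m+1)(606kJ_mB_m)^{a_m/2}e^{2(k-1)\sqrt{606kJ_mB_m}}$. The decisive point is that $W_m^{-a_m}\le(100k/J_m)^{a_m}$ overwhelms this: collecting powers yields $(6\cdot10^6k^3B_m/J_m)^{a_m/2}$, and since $a_m/2=\lceil 200kJ_m\rceil$ while $J_m\ge\exp(10^4k^2)$ for $m\ge2$ (and $B_1/J_1\asymp(\log\log y)^{-1/2}\to0$ for $m=1$), this is $\le e^{-10^5k^3J_m}$, which absorbs the prefactor ($e^{4W_m}\le e^{400kJ_m}$, or the $W_m$–power prefactor) together with the polynomial and $e^{2(k-1)\sqrt{\cdot}}$ factors; in the range $W_m>100kJ_m$ one first peels off an $W_m^{-2}$ and bounds the remaining $W_m$–power by $(100kJ_m)^{-(a_m-2J_m/(k-1)-2)}$, the exponent being positive since $a_m\ge 400kJ_m$. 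Finally I would reassemble: using $\mathrm{main}_m\ge1$, $\sum_m e^{-J_m/4}=O(e^{-J_M/4})$ (the $m\ge2$ terms forming a convergent geometric series since $J_m=J_M+M-m$), and $\prod_{m:\,n_m\ge1}(W_m+1)^{-2}=\prod_{m=1}^M(\inf I^{(m)}_{n_m}+1)^{-2}$, the product is at most $O_k(1)\prod_{\text{all }m}\mathrm{main}_m\cdot\prod_m(\inf I^{(m)}_{n_m}+1)^{-2}$; and $\prod_m\mathrm{main}_m=\mathbb E\,\exp\bigl(2(k-1)\Re\sum_{p\le y}(\tfrac{f(p)}{p^{1/2+il_1/\log y}}+\tfrac{f(p)^2}{2p^{1+2il_1/\log y}})+2\Re(\text{the }l_2\text{ sum})\bigr)$ is bounded by Lemma \ref{eulerproduct}, Mertens and Lemma \ref{cosine} — the cross term contributing $2(k-1)\sum_{p\le y}\tfrac{\cos((l_1-l_2)\log p/\log y)}{p}\le 2(k-1)\bigl(\log\log y-\log\max(|l_1-l_2|,1)\bigr)+O(k)$ and $(k-1)^2+1+2(k-1)=k^2$ — by $\ll_k (\log y)^{k^2}/(|l_1-l_2|^{2(k-1)}+1)$, which is the asserted bound. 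The main obstacle is the uniformity over $m$ of the $n_m\ge1$ estimate: forcing the single huge power $a_m$ to dominate simultaneously $\mathbb E|D_{m,l_2}(f)|^{2a_m}\approx a_m!(CB_m)^{a_m}$, the $W_m$–dependent prefactors, and the two regimes of $W_m$, including the awkward block $m=1$ where $B_1$ can be as large as $\log\log y$.
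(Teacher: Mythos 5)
Your proposal is correct and follows essentially the same route as the paper: drop the constraint on $\chi$ by positivity, use orthogonality together with \eqref{shortpolynomial} to pass to the Steinhaus model, factor the expectation over the independent blocks $I_m$, handle the $n_m=0$ blocks by comparing $R_{m,l_1}(f)U_{m,l_2}(f)$ with the full exponential up to an $e^{-cJ_m}$ error (the paper's Lemma \ref{zerocase}), handle the $n_m\geq 1$ blocks by letting the penalising power $a_m$ beat the moment growth of $|D_{m,l_2}(f)|$ (Lemma \ref{nonzerocase}), and assemble via Lemma \ref{eulerproduct}, Mertens and Lemma \ref{cosine}. The only deviations are local and harmless: for $n_m\geq 1$ you bound joint moments $\mathbb{E}|D_{m,l_1}|^{2j}|D_{m,l_2}|^{a_m}$ by H\"older plus Lemma \ref{evenmoment} where the paper first decouples by Cauchy--Schwarz and bounds $\mathbb{E}R_{m,l_1}^2$ and $\mathbb{E}U_{m,l_2}^2$ separately, and for $n_m=0$ you normalise with the Jensen bound $\mathbb{E}\,e^{2(k-1)\Re D_{m,l_1}+2\Re D_{m,l_2}}\geq 1$ in place of the paper's $e^{-O(k^3)}$ lower bound from Lemma \ref{eulerproduct}; both choices work, under the same implicit ``$y$ large in terms of $k$'' caveat for the $m=1$ block that the paper also uses.
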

\begin{proof}[Proof that Proposition \ref{big_upper} implies Proposition \ref{p2}]
   Firstly, note that $\prod_{m=1}^M (1+O(e^{-J_m}))\ll 1 $, so by Lemma \ref{easy} we have
 \begin{equation}
\label{fractal}
     \frac{1}{\phi(q)}\sum_{\chi\in \mathcal{X}(n_1,\ldots,n_M)} \prod_{m=1}^M R_{m,l_1}(\chi)R_{m,l_2}(\chi)^{\frac{1}{k-1}}\ll  \frac{1}{\phi(q)} \sum_{\chi\in \mathcal{X}(n_1,\ldots,n_M)} \prod_{m=1}^M  R_{m,l_1}(\chi) U_{m,l_2}(\chi). 
 \end{equation}
\newpage
\noindent We also have
\begin{equation*}
\begin{split}
    \sum_{n_1,\ldots , n_m\geq 0}\prod_{m=1}^M (\inf I^{(m)}_{n_m}+1)^{-2} =&\prod_{m=1}^M \Big(\sum_{n_m=0}^{\infty}(\inf I^{(m)}_{n_m}+1)^{-2}\Big) \\
    \leq & \prod_{m=1}^M\Big(1+2\cdot \big(\frac{100k}{J_m}\big)^2\Big) \\
    \leq &\exp\Big(10^5k^2 \sum_{m=1}^M J_m^{-2}\Big) \\
    \ll & 1.
\end{split}
\end{equation*}
These two inequalities together with Proposition \ref{big_upper} imply
\begin{equation*}
    \frac{1}{\phi(q)}\sum_{\chi\,\text{mod}\, q} \prod_{m=1}^M R_{m,l_1}(\chi)R_{m,l_2}(\chi)^{\frac{1}{k-1}}\ll \frac{(\log y)^{k^2}}{|l_1-l_2|^{2(k-1)}+1},
\end{equation*}
so by \eqref{höldi} we get
\begin{equation*}
\begin{split}
    \frac{1}{\phi(q)}\sum_{\chi\,\text{mod}\, q} R(\chi)^{\frac{k}{k-1}}  \leq &
  \sum_{|l_1|,|l_2|\leq \log y/2} \frac{1}{\phi(q)}\sum_{\chi\,\text{mod}\, q} \prod_{m=1}^M R_{m,l_1}(\chi)R_{m,l_2}(\chi)^{\frac{1}{k-1}} \\ 
   \ll &\sum_{|l_1|,|l_2|\leq \log y/2}  \frac{(\log y)^{k^2}}{|l_1-l_2|^{2(k-1)}+1} \\
   \ll & (\log y)^{k^2+1}, \\
\end{split}
\end{equation*}
so Proposition \ref{p2} follows.
\end{proof}
 To show Proposition \ref{big_upper} we first state and prove two lemmas. Note that we are switching to the random multiplicative function for now. We define the polynomials $R_{m,l}(f)$ and $U_{m,l}(f)$ the same way, except that $\chi$ is replaced by $f$.
 \begin{lemma}
 \label{zerocase}
     For $U_{m,l_2}$ in the case where $n_m=0$, we have 
     \begin{equation*}
          \mathbb{E} \big|e^{2(k-1)\Re D_{m,l_1}(f)+2\Re D_{m,l_2}(f)}- R_{m,l_1}(f) U_{m,l_2}(f)\big|\leq e^{-J_m}.
     \end{equation*}
 \end{lemma}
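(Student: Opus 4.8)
The plan is to abbreviate $a:=(k-1)\,\Re D_{m,l_1}(f)$ and $b:=\Re D_{m,l_2}(f)$, and set $P_a:=\sum_{0\le j\le J_m} a^j/j!$, $P_b:=\sum_{0\le j\le J_m} b^j/j!$, so that $R_{m,l_1}(f)=P_a^2$ and, in the case $n_m=0$, $U_{m,l_2}(f)=P_b^2$, while the exponential in the statement equals $e^{2a+2b}$. Thus the task is to show $\mathbb{E}\,|e^{2a+2b}-P_a^2P_b^2|\le e^{-J_m}$. The first step is to telescope: since $a,b$ are real and $|P_b|\le\sum_{j\ge0}|b|^j/j!=e^{|b|}$, we have $0\le P_b^2\le e^{2|b|}$ and
\[
\big|e^{2a+2b}-P_a^2P_b^2\big|\;\le\; e^{2a}\big|e^{2b}-P_b^2\big|\;+\;e^{2|b|}\big|e^{2a}-P_a^2\big| ,
\]
so it suffices to bound the expectation of each of the two terms on the right by $\tfrac12 e^{-J_m}$.

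For each term I would expand the truncation error via the identity $e^{2t}-P_t^2=\sum_{\max(j_1,j_2)>J_m} t^{j_1+j_2}/(j_1!j_2!)$ (this is exactly the shape of $\text{Err}_{m,l}$), and then apply the Cauchy--Schwarz inequality to every summand so as to peel off the exponential factor from the power of $|b|$ (respectively $|a|$). This reduces matters to two kinds of estimates. For the exponential moments $\mathbb{E}\,e^{4a}$ and $\mathbb{E}\,e^{4|b|}\le\mathbb{E}\,e^{4b}+\mathbb{E}\,e^{-4b}$, I would compute directly using the independence of the variables $f(p)$, $p\in I_m$ (each factor of the resulting product over $p\in I_m$ is $1+O(k^2/p)+O(k^3/p^{3/2})$ for $p$ large, and $e^{O(k)}$ for the finitely many small primes), which gives a bound of the shape $\exp\!\big(O(k^2)\sum_{p\in I_m}1/p+O(k^3)\big)$; this is $\ll_k 1$ for $m\ge 2$ and $\ll_k (\log y)^{O(k^2)}$ for $m=1$, because $\sum_{p\in I_1}1/p=\log\log y+O(\log\log\log y)$. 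For the polynomial moments, I would apply Lemma~\ref{evenmoment} with $\mathcal{P}=I_m$ and $c_1=1$, $c_n=0$ for $n>1$, obtaining $\mathbb{E}\,|\Re D_{m,l_i}(f)|^{2n}\ll n!\,A_m^{\,n}$ with $A_m:=\sum_{p\in I_m}(2/p+3/p^2)$, hence $\mathbb{E}\,|b|^{2n}\ll n!\,A_m^{\,n}$ and $\mathbb{E}\,|a|^{2n}\ll (k-1)^{2n}\,n!\,A_m^{\,n}$.

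Plugging these in, each of the two terms is at most $(\log y)^{O(k^2)}$ (for $m=1$; $O_k(1)$ for $m\ge 2$) times a combinatorial sum of the form $\sum_{\max(j_1,j_2)>J_m}\frac{c^{\,j_1+j_2}\sqrt{(j_1+j_2)!}\,A_m^{(j_1+j_2)/2}}{j_1!\,j_2!}$, with $c=k-1$ for the $a$-term and $c=1$ for the $b$-term. Using $\sqrt{(j_1+j_2)!}\le 2^{(j_1+j_2)/2}\sqrt{j_1!\,j_2!}$ together with $\sum_{j\ge0}x^j/\sqrt{j!}\le\sqrt 2\,e^{x^2}$, this sum is at most $2\sqrt2\,e^{2c^2A_m}\sum_{j_1>J_m}\big(2e\,c^2A_m/j_1\big)^{j_1/2}$; the inequality $10^4(k-1)^2A_m\le J_m$, established in the proof of Proposition~\ref{p4}, forces $2e\,c^2A_m/j_1<10^{-3}$ for $j_1>J_m$, so the $j_1$-sum is $\le e^{-2J_m}$, while the prefactor $e^{2c^2A_m}$ is again $\ll_k 1$ for $m\ge 2$ and $\ll_k(\log y)^{O(k^2)}$ for $m=1$. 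Collecting everything gives $\mathbb{E}\,|e^{2a+2b}-P_a^2P_b^2|\ll_k (\log y)^{O(k^2)}e^{-2J_m}$. When $m\ge 2$ there is no $\log y$ factor and $e^{O(k^2)}e^{-2J_m}\le e^{-J_m}$ since $J_m\ge J_M\ge \exp(10^4k^2)$; when $m=1$ one uses that $J_1=(\log\log y)^{3/2}$ exceeds any fixed multiple of $\log\log y$ once $q$ is large in terms of $k$, so again the right-hand side is $\le e^{-J_1}$.

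The step I expect to be the real obstacle is the interval $I_1$: there the exponential moments and the prefactor $e^{2c^2A_1}$ are genuinely of size a power of $\log y$ rather than bounded, so the whole saving has to come from the truncation length $J_1$, and one must exploit that $(\log\log y)^{3/2}$ outgrows $\log\log y$ (which forces a largeness assumption on $q$; the bounded-$x$ regime of Theorem~\ref{t1} is trivial). Everything else — the telescoping, the Cauchy--Schwarz splitting, the application of Lemma~\ref{evenmoment}, and the Stirling-type tail estimate (which mirrors the one in the proof of Proposition~\ref{p4}) — is routine.
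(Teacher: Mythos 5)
Your argument is correct, and it reaches the required bound by a genuinely different decomposition from the paper's. The paper does not telescope: it writes both $e^{2(k-1)\Re D_{m,l_1}(f)}$ and $e^{2\Re D_{m,l_2}(f)}$ as squares of full exponential series, so the whole difference is controlled by a single four-index sum over $\max(j_1,j_2,j_3,j_4)>J_m$; one application of Cauchy--Schwarz \emph{between the two Dirichlet polynomials} then reduces everything to the even moments $\mathbb{E}|D_{m,l_i}(f)|^{2n}\ll n!\,A_m^{\,n}$ from Lemma~\ref{evenmoment}, and a Stirling-type tail estimate using $k^2A_m\ll J_m$ gives $\le e^{-J_m}$ directly, with no large prefactor to absorb and no exponential moments ever computed. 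Your telescoping instead leaves one exponential factor intact in each term and pays for it with the moments $\mathbb{E}e^{4a}$, $\mathbb{E}e^{\pm 4b}$, which you must evaluate separately via independence (a small extra ingredient, in the spirit of Lemma~\ref{eulerproduct} and the $\cosh$ computation in the proof of Lemma~\ref{nonzerocase}); for $m=1$ this produces a genuinely large factor $(\log y)^{O(k^2)}$, which you then beat with the stronger tail saving $e^{-2J_m}$. That works, but only because $J_1=(\log\log y)^{3/2}$ outgrows $k^2\log\log y$, i.e.\ once $y$ (equivalently $x$, not $q$) is large in terms of $k$; this is not a gap relative to the paper, whose own proof makes the same implicit assumption when it asserts $10^3(k-1)^2A_1\le J_1$, and the bounded-$x$ regime of Theorem~\ref{t1} is indeed trivial, as you note. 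In short: the paper's route buys a cleaner, uniform-in-$m$ estimate with only polynomial moments; yours is more modular, treating each truncation error one variable at a time at the cost of exponential-moment computations and a slightly more delicate $m=1$ case.
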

\newpage
 \begin{remark}
     This is the same proof method as in the previous section where we replace a truncated exponential by the actual exponential at the cost of a small error.
 \end{remark}

\begin{proof}
By definition $R_{m,l_1}(f) U_{m,l_2}(f)=\Big(\sum_{j=0}^{J_m}\frac{(k-1)^j}{j!}(\Re D_{m,l_1}(f))^j\Big)^2\Big(\sum_{j=0}^{J_m} \frac{1}{j!}(\Re D_{m,l_2}(f))^j\Big)^2$. Using the Taylor series of the exponential we may bring $e^{2(k-1)\Re D_{m,l_1}(f)+2\Re D_{m,l_2}(f)}$ into similar form, except that the summation goes to infinity in both brackets. By taking absolute values we thus obtain
\begin{equation}
\label{difference}
    \big| e^{2(k-1)\Re D_{m,l_1}(f)+2\Re D_{m,l_2}(f)}- R_{m,l_1}(f) U_{m,l_2}(f)\big| \leq \sum_{\substack{j_1,j_2,j_3,j_4 \\ \max(j_1,j_2,j_3,j_4)>J_m}}\frac{(k-1)^{j_1+j_2}}{j_1!j_2!j_3!j_4!} |D_{m,l_1}(f)|^{j_1+j_2}|D_{m,l_2}(f)|^{j_3+j_4}.
\end{equation}
We apply Cauchy-Schwarz and Lemma \ref{evenmoment} to get
\begin{equation*}
\begin{split}
    \mathbb{E}|D_{m,l_1}(f)|^{j_1+j_2}|D_{m,l_2}(f)|^{j_3+j_4}\leq & \big(\mathbb{E}|D_{m,l_1}(f)|^{2(j_1+j_2)}\big)^{1/2} \big(\mathbb{E}|D_{m,l_1}(f)|^{2(j_3+j_4)}\big)^{1/2} \\
    \ll & [(j_1+j_2)!(j_3+j_4)!]^{1/2}
     \Big(\sum_{y_{m-1}<p\leq y_m}\frac{2}{p}+\frac{3}{p^2}\Big)^{(j_1+j_2+j_3+j_4)/2}.
\end{split}
\end{equation*}
For simplicity, let us denote $A_m=(k-1)^2 \sum_{y_{m-1}<p\leq y_m} \big(\frac{2}{p}+\frac{3}{p^2} \big)$. We will see that $A_m$ is a fairly small quantity.

Taking expectations in \eqref{difference}, noting that $k-1\geq 1$ and $n!\leq n^n$, we arrive at the upper bound
\begin{equation}
\label{thisone}
    \sum_{\substack{j_1,j_2,j_3,j_4 \\ \max(j_1,j_2,j_3,j_4)>J_m}} \frac{A_m^{(j_1+j_2+j_3+j_4)/2 }}{j_1!j_2!j_3!j_4!} (j_1+j_2)^{(j_1+j_2)/2} (j_3+j_4)^{(j_3+j_4)/2}.
\end{equation}
This is symmetric in the $j_i$-s so we may assume that $\max(j_1,j_2,j_3,j_4)=j_1$ at the cost of a factor of 4. This assumption implies $j_1>J_m$, $j_1+j_2\leq 2j_1$ and $j_3+j_4\leq 2j_1$, so \eqref{thisone} can be upper bounded by
\begin{equation*}
    4\sum_{j_1>J_m} \frac{A_m^{j_1/2} } {j_1!} (2j_1)^{j_1/2}\sum_{j_2,j_3,j_4=0}^{\infty} \frac{(2j_1A_m)^{(j_2+j_3+j_4)/2}  }{j_2!j_3!j_4!}.
\end{equation*}
The inner sum is really the Taylor expansion of the exponential, so we arrive at
\begin{equation*}
\label{thisonetwo}
     4\sum_{j_1>J_m} \frac{(2j_1A_m)^{j_1/2} }{j_1!}e^{3\cdot (2j_1A_m)^{1/2} }
\end{equation*}
We claim that $1000A_m\leq J_m$ (with room to spare). Indeed, if $m=1$, then $A_1\leq 4(k-1)^2 \log\log y$, say, and $J_1=(\log \log y)^{3/2} $. If $m\geq 2$, then (recall $y_{m-1}^{20}=y_m$) $A_m\leq 20(k-1)^2$, so $1000A_m\leq 20000k^2\leq J_M\leq J_m$. Therefore $3\cdot (2j_1A_m)^{1/2}\leq j_1$, so \eqref{thisone} is at most
\begin{equation*}
    4\sum_{j_1>J_m}\Big(\frac{A_m^{1/2} (2j_1)^{1/2} e^2 }{j_1}\Big)^{j_1}
\end{equation*}
Here the numerator $A_m^{1/2} (2j_1)^{1/2} e^2 \leq \frac{j_1}{3}$, which gives the bound
\begin{equation*}
    4\sum_{j_1>J_m}3^{-j_1}\leq e^{-J_m}.
\end{equation*}
\end{proof}
\begin{lemma}
\label{nonzerocase}
     For $U_{m,l_2}$ in the case where $n_m\geq 1$, we have 
    \begin{equation*}
        \mathbb{E}  R_{m,l_1}(f) U_{m,l_2}(f)\leq (\inf I^{(m)}_{n_m}+1)^{-2}.
    \end{equation*}
\end{lemma}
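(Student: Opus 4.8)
The plan is to separate the two factors by the Cauchy--Schwarz inequality,
\[
\mathbb{E}\, R_{m,l_1}(f)\,U_{m,l_2}(f)\le \big(\mathbb{E}\, R_{m,l_1}(f)^2\big)^{1/2}\big(\mathbb{E}\, U_{m,l_2}(f)^2\big)^{1/2},
\]
and to show that the penalising factor $W_m^{-a_m}$ built into $U_{m,l_2}(f)$ (writing $W_m=\inf I^{(m)}_{n_m}$, so $W_m\ge J_m/100k$) is small enough to dominate everything else. For the first factor I would use the crude bound $\big|\sum_{j\le J_m}\tfrac{(k-1)^j}{j!}(\Re D_{m,l_1}(f))^j\big|\le e^{(k-1)|\Re D_{m,l_1}(f)|}$, which gives $R_{m,l_1}(f)\le e^{2(k-1)|\Re D_{m,l_1}(f)|}$ and hence $\mathbb{E}\, R_{m,l_1}(f)^2\le \mathbb{E}\, e^{4(k-1)\Re D_{m,l_1}(f)}+\mathbb{E}\, e^{-4(k-1)\Re D_{m,l_1}(f)}$. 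Since the $f(p)$ are independent, each of these expectations factors as a product over $p\in I_m$, and the elementary per-prime estimates behind Lemma~\ref{eulerproduct} (the $O_k(1)$ small primes handled trivially) show it is $\exp\!\big(O_k(1)+O(k^2)\sum_{p\in I_m}1/p\big)$. For $m\ge 2$ one has $\sum_{p\in I_m}1/p=O(1)$, so $\mathbb{E}\,R_{m,l_1}(f)^2=e^{O_k(1)}$; for $m=1$ one only has $\sum_{p\in I_1}1/p\le\log\log y+O(1)$, so $\mathbb{E}\,R_{1,l_1}(f)^2=e^{O_k(1)}(\log y)^{O_k(1)}$.

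For the second factor, recall that in the case $n_m\ge1$ we have $U_{m,l_2}(f)=\gamma_m\,W_m^{-a_m}\,|D_{m,l_2}(f)|^{a_m}$ for an explicit nonnegative constant $\gamma_m$ ($\gamma_m=e^{4W_m}$ if $J_m/100k\le W_m\le 100kJ_m$, and $\gamma_m=\big(2\tfrac{(k-1)^{J_m}}{J_m!}(2W_m)^{J_m}\big)^{2/(k-1)}$ if $W_m\ge 100kJ_m$), so $\mathbb{E}\,U_{m,l_2}(f)^2=\gamma_m^2\,W_m^{-2a_m}\,\mathbb{E}|D_{m,l_2}(f)|^{2a_m}$. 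Applying Lemma~\ref{evenmoment} with $c_n=\mathbf{1}(n=1)$, the set of primes $I_m$, coefficients of modulus $1$ and $1/2$, and $j=a_m$ (an even integer, $a_m=2\lceil200kJ_m\rceil$) gives $\mathbb{E}|D_{m,l_2}(f)|^{2a_m}\ll (a_m!)\,B_m^{a_m}$ with $B_m:=\sum_{p\in I_m}\big(\tfrac2p+\tfrac{3}{2p^2}\big)$; using $(a_m!)^{1/2}\le a_m^{a_m/2}$ we obtain $\big(\mathbb{E}\,U_{m,l_2}(f)^2\big)^{1/2}\ll \gamma_m\,(a_m B_m/W_m^2)^{a_m/2}$. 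In the range $J_m/100k\le W_m\le 100kJ_m$ the function $W\mapsto e^{4W}W^{-a_m}$ is decreasing (as $a_m/4\ge 100kJ_m$), so it suffices to evaluate at the left endpoint $W_m=J_m/100k$, where $a_mB_m(100k/J_m)^2=O(k^4/C_0)$ when $m\ge2$ (using $B_m=O(1)$ and $J_m\ge J_M=C_0/10^5k$), respectively $O(k^3/(\log\log y)^{1/2})$ when $m=1$; this forces the bound to be $\ll e^{-cJ_m}$ for a large absolute $c$, once $C_0$ is large in terms of $k$ (resp. $y$ is large). In the range $W_m\ge 100kJ_m$ I would bound $\gamma_m\le 4(2ekW_m/J_m)^{2J_m}$ using $J_m!\ge(J_m/e)^{J_m}$ and $k\ge2$; the net power of $W_m$ is then $2J_m-a_m<0$, so I would peel off a factor $W_m^{-2}$ and estimate the remaining negative power of $W_m$ by replacing $W_m$ with its lower bound $100kJ_m$. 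This gives $\big(\mathbb{E}\,U_{m,l_2}(f)^2\big)^{1/2}\ll W_m^{-2}\,(200ek^2)^{2J_m}(100kJ_m)^2\big(a_mB_m/(100kJ_m)^2\big)^{a_m/2}$, and since $a_mB_m/(100kJ_m)^2=O(1/C_0)$ (resp. $O(1/(\log\log y)^{1/2})$) this is $\ll W_m^{-2}e^{-cJ_m}$.

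Multiplying the two factors, $\mathbb{E}\, R_{m,l_1}(f)\,U_{m,l_2}(f)$ is $\ll e^{O_k(1)}e^{-cJ_m}$ in the bounded range of $W_m$ and $\ll e^{O_k(1)}W_m^{-2}e^{-cJ_m}$ in the unbounded range, with an extra harmless factor $(\log y)^{O_k(1)}$ when $m=1$. In the bounded range $(W_m+1)^{-2}\ge(101kJ_m)^{-2}$ decays only polynomially in $J_m$, so the displayed bound is $\le(W_m+1)^{-2}$ once $J_m\ge J_M=C_0/10^5k$ is large enough; in the unbounded range $W_m^{-2}\le4(W_m+1)^{-2}$ and $e^{O_k(1)}e^{-cJ_m}\le1/4$ for $C_0$ large, so again the bound is $\le(W_m+1)^{-2}$; and for $m=1$ the super-polynomial saving $e^{-cJ_1}=e^{-c(\log\log y)^{3/2}}$ swallows the factor $(\log y)^{O_k(1)}$ for $y$ large.

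I expect the main obstacle to be purely bookkeeping: one has to make a single choice of the absolute constant $C_0$ (large in terms of $k$) work simultaneously for every $1\le m\le M$ — where $J_m$ ranges from the fixed value $J_M=C_0/10^5k$ up to $J_1=(\log\log y)^{3/2}$ — and for every $W_m\ge J_m/100k$, which in the second case is unbounded, so the $(W_m+1)^{-2}$ dependence must genuinely be extracted from $W_m^{-a_m}$ rather than estimated away. The $m=1$ stratum is the only slightly delicate point, since there $\mathbb{E}\,R_{1,l_1}(f)^2$ carries a genuine power of $\log y$ that must be absorbed by the saving coming from the large truncation parameter $J_1=(\log\log y)^{3/2}$.
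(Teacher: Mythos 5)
Your proposal is correct and follows essentially the same route as the paper: Cauchy--Schwarz to split off the two factors, an exponential-moment bound of the shape $e^{O(k^2\sum_{p\in I_m}1/p)}$ for $\mathbb{E}\,R_{m,l_1}(f)^2$, and the $2a_m$-th moment of $D_{m,l_2}(f)$ via Lemma \ref{evenmoment} to exploit the penalising factor $W_m^{-a_m}$, with the same two ranges $J_m/100k\leq W_m\leq 100kJ_m$ and $W_m\geq 100kJ_m$ treated separately. The only cosmetic deviations (bounding $\mathbb{E}\,R_{m,l_1}(f)^2$ through $e^{|x|}\leq e^{x}+e^{-x}$ and per-prime products instead of the paper's $\cosh$ trick with Lemma \ref{evenmoment}, and handling the bounded range by monotonicity at the left endpoint) do not change the argument, and your implicit requirement that $y$ be large in terms of $k$ to absorb the $(\log y)^{O_k(1)}$ factor when $m=1$ is present in the paper's proof as well.
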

\begin{remark}
    Here the bound could be made much stronger, however this is enough for us (recall how \ref{big_upper} implies \ref{p2} above). Moreover, in the proof we are allowed to apply crude bounds as well. This comes down to the fact that $n_m\geq 1$ corresponds to rare characters, whose contribution is negligible. 
\end{remark}
\begin{proof}
In this proof, for simplicity let us denote
\begin{equation*}
    P_m=\sum_{y_{m-1}<p\leq y_m}\frac{2}{p}+\frac{3}{p^2}\leq \begin{cases}
        10 \text{  if  } m\geq 2, \\
        3\log \log y \text{  if  } m=1. \\
    \end{cases}
\end{equation*}
By Cauchy Schwarz, we have
\begin{equation*}
    \big(\mathbb{E}  R_{m,l_1}(f) U_{m,l_2}(f) \big)^2\leq \mathbb{E} R_{m,l_1}(f)^2 \mathbb{E}  U_{m,l_2}(f)^2.
\end{equation*}
\newpage
We first bound $\mathbb{E} R_{m,l_1}(f)^2$. When $m\geq 2$ this expectation turns out to be less than a constant depending on $k$. When $m=1$ it will be at most a fixed $k$-dependent power of $\log y$. Taking absolute values we get
\begin{equation*}
    \mathbb{E} R_{m,l_1}(f)^2= \mathbb{E} \bigg( \sum_{j=0}^{J_m}\frac{(k-1)^j}{j!} \big(\Re D_{m,l_1} (f)\big)^j\bigg)^4\leq \mathbb{E} \bigg( \sum_{j=0}^{\infty}\frac{(k-1)^j}{j!}  |D_{m,l_1} (f)|^j\bigg)^4 \\
    =\mathbb{E} e^{4(k-1)|D_{m,l_1} (f)|}.
\end{equation*}   
Note that for any real $x$, we have $e^x\ll \cosh(x)=\sum_{j=0}^{\infty} \frac{x^{2j}}{(2j)!}$. We apply this with $x=4(k-1)|D_{m,l_1} (f)|$ together with Lemma \ref{evenmoment} to obtain
\begin{equation}
\label{kurvaanyad}
\begin{split}
    \mathbb{E} e^{4(k-1)|D_{m,l_1} (f)|}\ll \sum_{j=0}^{\infty} \frac{(4k)^{2j}}{(2j)!}\mathbb{E}|D_{m,l_1} (f)|^{2j}  \ll &\sum_{j=0}^{\infty} \frac{(4k)^{2j}}{(2j)!} j! \Big(\sum_{y_{m-1}<p\leq y_m}\frac{2}{p}+\frac{3}{p^2}\Big)^j \\
    \leq & \sum_{j=0}^{\infty} \frac{(4k)^{2j}}{j!} P_m^j \\
    = & \exp\big( 4k^2P_m\big). \\
\end{split}
\end{equation}
For the second line we used the inequality $(j!)^2\leq (2j)!$.

We next bound $\mathbb{E}  U_{m,l_2}(f)^2$, which will give us the saving for Lemma \ref{nonzerocase}. For simplicity, let us denote $W=\inf I^{(m)}_{n_m}$ (note that $m$ and $n_m$ are fixed in this lemma). 


$U_{m,l_2}(f)$ has a slightly different shape according to whether $J_m/100k \leq W\leq 100kJ_m$ or $W\geq 100kJ_m$. We first handle $J_m/100k \leq W\leq 100kJ_m$. Here by definition and Lemma \ref{evenmoment} we have
\begin{equation*}
    \mathbb{E}  U_{m,l_2}(f)^2\leq e^{8W}W^{-2a_m} \mathbb{E} |D_{m,l_2}(f)|^{2a_m}\ll e^{8W}W^{-2a_m} a_m! P_m^{a_m}\leq \Big(\frac{e^2a_m P_m}{W^2}\Big)^{a_m}
\end{equation*}
For the last inequality we used $a_m!\leq a_m^{a_m}$ and that $8W\leq 800kJ_m\leq 2a_m$. We claim $\frac{e^2a_m P_m}{W^2}\leq e^{-1}$. Recall that $W\geq J_m/100k$, whereas $a_m\leq 500kJ_m$, say. If $m\geq 2$, then we have $P_m\leq 10$, so
\begin{equation*}
    W^2 a_m^{-1}P_m^{-1}e^{-3}\geq \frac{J_m}{10^{10}k^2}\geq 1.
\end{equation*}
If $m=1$, then we proceed similarly and use $a_1 \leq 500kJ_1$, $W\geq \frac{J_1}{100k}$, $P_1\leq 3\log \log y$, $J_1=(\log \log y)^{3/2}$, hence
\begin{equation*}
    \frac{e^2a_m P_m}{W^2}\leq 10^{10}k^3(\log \log y)^{-1/2},
\end{equation*}
say, which is arbitrarily small if $y$ is large enough. Therefore in the case when $J_m/100k \leq W\leq 100kJ_m$, we have shown that $\mathbb{E}U_{m,l_2}(f)^2\ll e^{-a_m}\leq e^{-W}$, so to prove the lemma we need to show $e^{4k^2P_m}e^{-W}\leq (W+1)^{-4}$, which holds with a lot of room to spare. If $m\geq 2$, we have $P_m\leq 10$ and $W\geq \frac{J_m}{100k}\geq e^{1000k^2}$. When $m=1$ we have $P_1\leq 3\log\log y $ and $W\geq \frac{(\log \log y)^{3/2}}{100k}$.

Now consider the case of $W\geq 100kJ_m$. Using $J_m!\geq \big(\frac{J_m}{e}\big)^{J_m}$ and $\frac{2J_m}{k-1}\leq  200kJ_m\leq a_m/2 $, we have 
\begin{equation*}
    \Big(2\frac{ (k-1)^{J_m} }{J_m!} (2W_m )^{J_m}\Big)^{\frac{2}{k-1} }\leq \Big( \frac{10kW}{J_m}\Big)^{\frac{2J_m}{k-1}}\leq\Big( \frac{10kW}{J_m} \Big)^{a_m/2}.
\end{equation*}
By Lemma \ref{evenmoment} we obtain
\begin{equation*}
    \mathbb{E}U_{m,l_2}(f)^2\leq \Big( \frac{10kW}{J_m} \Big)^{a_m}W^{-2a_m}\mathbb{E} |D_{m,l_2}(f)|^{2a_m}\ll \Big( \frac{10kP_ma_m}{WJ_m} \Big)^{a_m}
\end{equation*}
If $m\geq 2$, we have $10kP_ma_m\leq 10^5k^2J_m$, so
\begin{equation*}
     \Big( \frac{10kP_ma_m}{WJ_m} \Big)^{a_m}\leq \Big(\frac{10^5k^2}{W}\Big)^{a_m}\leq W^{-a_m/2}.
\end{equation*}
If $m=1$, then note that $P_1\leq 3\log \log y$ and $W\geq J_1=(\log \log y)^{3/2}$, so
\begin{equation*}
   \Big( \frac{10kP_ma_m}{WJ_m}\Big)^{a_m} \leq \Big(\frac{10^5k^2\log \log y}{W}\Big)^{a_m}\leq W^{-a_m/4} ,
\end{equation*}
when $y$ is large enough.
Therefore, when $100kJ_m\leq W$ we have shown that $\mathbb{E}U_{m,l_2}(f)^2\ll W^{-100kJ_m}$, so we need to show that $e^{40k^2P_m}W^{-100kJ_m}\leq (W+1)^{-4}$, which again holds with a lot of room to spare.
\end{proof}

\begin{proof}[Proof of Proposition \ref{big_upper}]
     By \eqref{shortpolynomial}, $\prod_{m=1}^M R_{m,l_1}(\chi) U_{m,l_2}(\chi)$ is a Dirichlet polynomial of length at most
 \begin{equation*}
     \prod_{m=1}^M y_m^{(4+4)J_m+2a_m}<q,  
 \end{equation*}
therefore by the orthogonality relation and since $ R_{m,l_1}(\chi) U_{m,l_2}(\chi)$ is a non-negative quantity,
\begin{equation*}
    \frac{1}{\phi(q)} \sum_{\chi\in \mathcal{X}(n_1,\ldots,n_M)}\prod_{m=1}^M R_{m,l_1}(\chi) U_{m,l_2}(\chi) \leq\frac{1}{\phi(q)} \sum_{\chi\in \mathcal{X}_q }\prod_{m=1}^M R_{m,l_1}(\chi) U_{m,l_2}(\chi)= \mathbb{E} \prod_{m=1}^M R_{m,l_1}(f) U_{m,l_2}(f).
\end{equation*}
Note that the random quantities $\big(R_{m,l_1}(f) U_{m,l_2}(f)\big)_{1\leq m\leq M}$ are independent of each other, so we can bring the expectation inside. By Lemma \ref{eulerproduct} we also have
\begin{equation}
\label{faszom1}
\begin{split}
     & \mathbb{E} e^{2(k-1)\Re D_{m,l_1}(f)+2\Re D_{m,l_2}(f)} \\= & e^{O\big(k\sum_{y_{m-1}<p\leq y_m } \frac{1}{p^{3/2} } \big) }\mathbb{E}\prod_{y_{m-1}<p\leq y_m} \Big|1-\frac{f(p)}{p^{1/2+il_1/\log y} } \Big|^{-2(k-1)}\cdot \Big| 1-\frac{f(p)}{p^{1/2+il_2/\log y}}\Big|^{-2} \\
     =&  \exp \Big( \sum_{y_{m-1}<p\leq y_m} \frac{(k-1)^2}{p}+\frac{1}{p}+\frac{2(k-1)\cos \big( \frac{l_1-l_2}{\log y} \log p\big)}{p}+O\big( \frac{k^3}{y_{m-1}^{1/2}} +k\sum_{y_{m-1}<p\leq y_m } \frac{1}{p^{3/2} } \big)\Big) \\
\end{split}
\end{equation}
Hence, this expectation is at least $e^{-O(k^3)}$, so by Lemma \ref{zerocase}, if $n_m=0$, then
\begin{equation}
\label{faszom2}
     \mathbb{E} R_{m,l_1}(f)U_{m,l_2}(f) \leq \exp\big(e^{O(k^3)-J_m}\big) \mathbb{E} e^{2(k-1)\Re D_{m,l_1}(f)+2\Re D_{m,l_2}(f)}.
\end{equation}
Now if we sum these error terms for $1\leq m\leq M$, we get the bound
\begin{equation*}
    \sum_{m=1}^M \Big( e^{O(k^3)-J_m}+\frac{k^3}{y_{m-1}^{1/2}} +k\sum_{y_{m-1}<p\leq y_m } \frac{1}{p^{3/2} }\Big)\ll e^{O(k^3) }.
\end{equation*}
When $n_m>0$, we use Lemma \ref{nonzerocase}, that is $\mathbb{E} R_{m,l_1}(f)U_{m,l_2}(f)\leq (\inf I^{(m)}_{n_m}+1)^{-2}$. Note that when $n_m=0$, we can multiply our bound by $1=(\inf I^{(m)}_{n_m}+1)^{-2}$, as it does not change the value.
Therefore \eqref{faszom1}, \eqref{faszom2} and Lemma \ref{nonzerocase} give 
\begin{equation*}
    \prod_{m=1}^M \mathbb{E} R_{m,l_1}(f)U_{m,l_2}(f)
    \leq \exp \Big( \sum_{p\leq y_M} \frac{(k-1)^2}{p}+\frac{1}{p}+\frac{2(k-1)\cos \big( \frac{l_1-l_2}{\log y} \log p\big)}{p}+O\big(k^3 \big)\Big)\prod_{m=1}^M (\inf I^{(m)}_{n_m}+1)^{-2}.
\end{equation*}
We know apply Lemma \ref{cosine}, to get
\begin{equation*}
    \sum_{p\leq y_M} \frac{(k-1)^2}{p}+\frac{1}{p}+\frac{2(k-1)\cos \big( \frac{l_1-l_2}{\log y} \log p\big)}{p}\ll \frac{(\log y)^{k^2}}{|l_1-l_2|^{2(k-1)}+1},
\end{equation*}
from which the proposition follows.
\end{proof}
\section{The large $x$ case}
In this section we explain how to modify our argument to the case when $q^{1/2}\leq x\leq q/2$. Firstly, we show that we can assume that $x\leq \frac{q}{C}$, where $C$ is a large constant chosen later. By Hölder's inequality we have
\begin{equation*}
    \frac{1}{q-2} \sum_{\chi\neq \chi_0} \bigg| \sum_{n\leq x} \chi(n)\bigg|^{2k}\geq \bigg( \frac{1}{q-2} \sum_{\chi\neq \chi_0} \bigg| \sum_{n\leq x} \chi(n)\bigg|^{2}\bigg)^{k}.
\end{equation*}
Note that we divide by $\phi(q)-1=q-2$ because this is the number of non-trivial characters. By the orthogonality relation, we have 
\begin{equation*}
    \frac{1}{q-1} \sum_{\chi\neq \chi_0} \bigg| \sum_{n\leq x} \chi(n)\bigg|^{2}=\lfloor x\rfloor-\frac{\lfloor x\rfloor^2}{q-1},
\end{equation*}
which is at least $\frac{x}{3}$, say, so we get
\begin{equation*}
 \frac{1}{q-2} \sum_{\chi\neq \chi_0} \bigg| \sum_{n\leq x} \chi(n)\bigg|^{2k}\geq x^k 3^{-k},
\end{equation*}
which is the desired bound when $x\geq q/C$.

When $q^{1/2}\leq x\leq q/C$, we choose our proxy object $R(\chi)$ the same way as if we had $x\leq q^{1/2}$, but the parameter $y$ is chosen as $y=\big( \frac{q}{x}\big)^{1/C_0}$ (and the subdivison $1=y_0<y_1<\ldots <y_M=y$ is chosen accordingly), with the same large constant $C_0$. In particular the proxy objects $R(\chi)$ corresponding to $x$ and $\frac{q}{x}$ are the same.
Now it is enough to show that with this choice of $y$ and $R(\chi)$, Proposition \ref{p1} and \ref{p2} hold, since then \eqref{main_Hölder} (Hölder's inequality) implies 
\newpage
\begin{equation*}
    \frac{1}{\phi(q)}\sum_{\chi \in \mathcal{X}_q^*}\bigg|\sum_{n\leq x} \chi(n)\bigg|^{2k}\gg_k x(\log y)^{(k-1)^2},
\end{equation*}
which is sufficient for us as $\log y \asymp \log  \frac{q}{x}$. Proposition \ref{p2} clearly holds, since the argument does not involve $x$ whatsoever, however the same is not true for Proposition \ref{p1}. At the start of the argument, we added the contribution from the trivial character $\Big|\sum_{n\leq x} \chi_0(n)\Big|^2R(\chi_0)$ so we could use orthogonality and switch to the random multiplicative function. There we used that $x\leq q^{1/2}$. However, now we have
\begin{equation*}
    \Big|\sum_{n\leq x} \chi_0(n)\Big|^2R(\chi_0)\ll x^2(\log y)\prod_{m=1}^M (10ky_m)^{2J_m} \ll x^2 \Big(\frac{q}{x}\Big)^{1/10},
\end{equation*}
so the contribution is
\begin{equation*}
    \ll \frac{x^2}{\phi(q)}  \Big(\frac{q}{x}\Big)^{1/10}\leq 2x \Big(\frac{x}{q}\Big)^{1-1/10}\leq 2xC^{-9/10}.
\end{equation*}
We choose $C$ so large in terms of $k$ that this contribution becomes negligible.

The second thing we need to be careful of when reproving Proposition \ref{p2} is that in Lemma \ref{condition_y} we assumed $y<x^{1/10}$, but this holds now `even more', since $y^{10}\leq \frac{q}{x}\leq x$.
\section{Proof of Theorem \ref{t2}}
We show how to modify the proof of Theorem \ref{t1} to obtain Theorem \ref{t2}. 

Firstly, we consider even characters. Let $x=q^{1/2}(\log q)^2$, $y=x^{1/C_0}$, where $C_0$ is a sufficiently large constant. Our proof method is essentially the same. We define our proxy object $R(\chi)$ corresponding to $y$ as in Section 3, and apply Hölder's inequality
\begin{equation*}
    \bigg(\frac{1}{\phi(q)}\sum_{\chi \in X_q^+\backslash \{ \chi_0\} } |\theta(1,\chi)|^{2k} \bigg)^{1/k} \bigg( \frac{1}{\phi(q)}\sum_{\chi \in X_q^+\backslash \{ \chi_0\} } R(\chi)^{\frac{k}{k-1}}\bigg)^{(k-1)/k}\geq \frac{1}{\phi(q)}\sum_{\chi \in X_q^+\backslash \{ \chi_0\} }  |\theta(1,\chi)|^{2} R(\chi).
\end{equation*}
\newpage
By Section 5, we clearly have the same upper bound
\begin{equation*}
    \frac{1}{\phi(q)}\sum_{\chi \in X_q^+\backslash \{ \chi_0\} } R(\chi)^{\frac{k}{k-1}}\ll (\log y)^{k^2+1},
\end{equation*}
on the other hand, getting the lower bound for
\begin{equation*}
    \frac{1}{\phi(q)}\sum_{\chi \in X_q^+\backslash \{ \chi_0\} }  |\theta(1,\chi)|^{2} R(\chi)
\end{equation*}
is trickier. For any character $\chi$, we have the crude bound
\begin{equation*}
   \Big| \sum_{n\geq x}\chi(n)e^{-\frac{\pi n^2}{q} }\Big|\leq qe^{-\frac{x^2}{q}}\leq e^{-(\log q)^3},
\end{equation*}
so it suffices to show
\begin{equation*}
    \frac{1}{\phi(q)}\sum_{\chi \in X_q^+\backslash \{ \chi_0\} }  \Big|\sum_{n\leq x}\chi(n)e^{-\frac{\pi n^2}{q} } \Big|^{2} R(\chi)\gg q^{1/2}(\log y)^{k^2-1}.
\end{equation*}
As we did at the start of Section 4, we add $\chi_0$ into the summation as its contribution is negligible compared to the lower bound we want to show. Next we use the orthogonality relation for even characters (here we use that $q$ is prime), namely
\begin{equation*}
    \frac{2}{\phi(q)}\sum_{\chi\in \chi_q^+} \chi(n)\Bar{\chi}(m)=\mathbf{1}(n\equiv \pm m\, \text{mod }q),
\end{equation*}
and note that the Dirichlet polynomial $\big|\sum_{n\leq x}\chi(n)e^{-\frac{\pi n^2}{q} } \big|^{2} R(\chi)$ has length at most $x^{1+1/10}<q$ (to be precise, here by length we mean the largest $n$ such that a term $\chi(n)$ or $\Bar{\chi}(n)$ occur), so only the diagonal terms survive and we can switch to proving
\begin{equation*}
    \mathbb{E}\Big| \sum_{n\leq x}f(n)e^{-\frac{-\pi n^2}{q}}\Big|^2 R(f)\gg x(\log y)^{k^2-1},
\end{equation*}
for which we would like bounds corresponding to Proposition \ref{p3} and \ref{p4}. The proof of Proposition \ref{p4} is done in the very same manner as in the unweighted case, the only slight difference is that during the proof of Lemma \ref{hardestlemma}, the quantity \eqref{square} is a bit different with a weight $e^{-\frac{\pi l^2}{q}}$ introduced in the inner sum, but since this weight as at most $1$, this does not change rest of the argument.

On the other hand, the proof of Proposition \ref{p3} with weights $e^{-\frac{\pi n^2}{q} }$ is not as straightforward, because these weights are not multiplicative. After using the same conditioning argument (condition with respect to  primes up to $y$), we need to get a lower bound on
\begin{equation*}
    \mathbb{E}\sum_{\substack{n\leq x \\ P^-(n)>y} } \Big| \sum_{\substack{m\leq x/n \\ P^+(m)\leq y}} e^{-\pi \frac{(mn)^2}{q} } f(m)\Big|^2 \prod_{p\leq y}\Big| 1-\frac{f(p)}{p^{1/2+it_0}}\Big|^{-2(k-1)},
\end{equation*}
where $t_0=\frac{il}{\log y}$ for some $|l|\leq \log y/2$. The actual value of $t_0$ is unimportant as the argument goes the same for all values.
Firstly, note that we may extend the summation over $m$ to infinity by the fast decay of the exponential weight, in particular we have
\begin{equation}
\label{nemtommitirjak}
    \sum_{\substack{n\leq x \\ P^-(n)>y} } \Big| \sum_{\substack{m\leq x/n \\ P^+(n)\leq y}} e^{-\pi \frac{(mn)^2}{q} } f(m)\Big|^2=\sum_{\substack{n\leq x \\ P^-(n)>y} } \Big| \sum_{\substack{m=1 \\ P^+(m)\leq y}}^{\infty} e^{-\pi \frac{(mn)^2}{q} } f(m)\Big|^2 + O\big(e^{-(\log q)^2}\big).
\end{equation}
Similarly to the unweighted case, our next step is to replace the sum over $n$ by an integral (in the unweighted case we actually used a substitution $r= \lfloor x/n \rfloor $ and then switched to an integral). Let us denote
\begin{equation*}
    g(t) = \sum_{\substack{m=1 \\ P^+(m)\leq y}}^{\infty} e^{-\pi \frac{(mt)^2}{q} } f(m).
\end{equation*}
We show that $g(t)$ changes relatively slowly for sufficiently large $t$.
\begin{lemma}
\label{smallchange}
    For any $t>0$ and $0<\alpha<t$, we have
    \begin{equation*}
        |g(t+\alpha)-g(t)|\ll \frac{\alpha q^{1/2}}{t^2}
    \end{equation*}.
\end{lemma}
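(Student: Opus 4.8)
The plan is to estimate the difference termwise and reduce everything to a standard Gaussian-type sum. First I would use that $f$ is completely multiplicative with $|f(p)|=1$, so $|f(m)|=1$ for every $m$; discarding the smoothness restriction $P^+(m)\le y$ (which only helps in an upper bound), the triangle inequality gives
\[
|g(t+\alpha)-g(t)|\le \sum_{m\ge 1}\bigl|e^{-\pi m^2(t+\alpha)^2/q}-e^{-\pi m^2 t^2/q}\bigr|.
\]

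Next I would control each summand by the mean value theorem. Writing $\phi_m(u)=e^{-\pi m^2u^2/q}$, we have $\phi_m'(u)=-\tfrac{2\pi m^2u}{q}e^{-\pi m^2u^2/q}$, and for $u\in[t,t+\alpha]$ one has both $u\le t+\alpha<2t$ (by the hypothesis $\alpha<t$) and $e^{-\pi m^2u^2/q}\le e^{-\pi m^2t^2/q}$, hence
\[
\bigl|\phi_m(t+\alpha)-\phi_m(t)\bigr|\le \alpha\max_{u\in[t,t+\alpha]}|\phi_m'(u)|\le \frac{4\pi\alpha t}{q}\,m^2 e^{-\pi m^2 t^2/q}.
\]

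It then remains to bound $\sum_{m\ge 1}m^2 e^{-\pi m^2 t^2/q}$ by $O(q^{3/2}/t^3)$, uniformly in $t>0$. I would do this by splitting $e^{-\pi m^2t^2/q}=e^{-\pi m^2t^2/(2q)}\cdot e^{-\pi m^2t^2/(2q)}$ and using the elementary inequality $ve^{-v}\le 1$ with $v=\pi m^2t^2/(2q)$ to get $m^2e^{-\pi m^2t^2/(2q)}\ll q/t^2$; since moreover $u\mapsto e^{-\pi u^2 t^2/(2q)}$ is decreasing, $\sum_{m\ge 1}e^{-\pi m^2 t^2/(2q)}\le \int_0^\infty e^{-\pi u^2t^2/(2q)}\,du\ll q^{1/2}/t$. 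Multiplying these two estimates gives $\sum_{m\ge1}m^2e^{-\pi m^2t^2/q}\ll q^{3/2}/t^3$, and combining with the two displays above yields $|g(t+\alpha)-g(t)|\ll \tfrac{\alpha t}{q}\cdot\tfrac{q^{3/2}}{t^3}=\alpha q^{1/2}/t^2$, as claimed.

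The argument is entirely elementary, so I do not expect a genuine obstacle; the one point that needs a little care is obtaining a bound on $\sum_{m\ge1}m^2e^{-\pi m^2 t^2/q}$ that is valid for all $t$ at once — for $t\lesssim q^{1/2}$ this sum behaves like the Gaussian integral $\asymp(q/t^2)^{3/2}$, whereas for $t\gtrsim q^{1/2}$ it decays exponentially in $t^2/q$ — and the splitting above handles both regimes uniformly, avoiding any case distinction.
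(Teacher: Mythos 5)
Your proof is correct and follows essentially the same route as the paper: a termwise estimate showing $|e^{-\pi m^2(t+\alpha)^2/q}-e^{-\pi m^2t^2/q}|\ll \frac{m^2\alpha t}{q}e^{-\pi m^2t^2/q}$, followed by the bound $\sum_{m\geq 1}m^2e^{-\pi m^2t^2/q}\ll q^{3/2}/t^3$. The only differences are cosmetic bookkeeping (you use the mean value theorem and a split of the exponential with $ve^{-v}\leq 1$ plus integral comparison, while the paper uses $|1-e^{-v}|\leq v$ and blocks the sum into intervals of length $q^{1/2}/t$), and your handling of the sum is if anything cleaner about uniformity in $t$.
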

\newpage
\begin{proof}
Using the approximation $e^{u}=1+O(u)$ when $u\leq 0$, we obtain
\begin{equation*}
    \Big| e^{-\pi \frac{(mt)^2}{q} }-e^{-\pi \frac{m^2(t+\alpha)^2}{q} } \Big|=e^{-\pi \frac{(mt)^2}{q} }\Big|1-e^{-\pi \frac{m^2(2\alpha t+\alpha^2)}{q} }\Big|  \ll  \frac{m^2\alpha t}{q}\cdot e^{-\frac{(mt)^2}{q} }
\end{equation*}
Therefore, by the triangle inequality, we obtain
\begin{equation*}
    |g(t+\alpha)-g(t)|\leq \frac{\alpha t}{q} \sum_{m=1}^{\infty} m^2 e^{-\frac{(mt)^2}{q} }.
\end{equation*}
We have
\begin{equation*}
     \sum_{m=1}^{\infty} m^2 e^{-\frac{(mt)^2}{q} }  =\sum_{k=0}^{\infty} \sum_{\frac{kq^{1/2}}{t} <m \leq \frac{(k+1)q^{1/2}}{t}} m^2 e^{-\frac{(mt)^2}{q} }\ll\frac{q^{3/2}}{t^3} \sum_{k=1}^{\infty} (k+1)^2 e^{-k^2}\ll \frac{q^{3/2}}{t^3},
\end{equation*}
which proves the lemma.
\end{proof}
Fix $t$ in the range $x^{3/4}\leq t\leq x$. Let $\delta=\frac{1}{100}$. Note that $|g(t)|\ll \frac{q^{1/2}}{t}$, so if $t\leq n\leq t+x^{\delta}$, then  $|g(n)|^2-|g(t)|^2 \ll \frac{q^{1/2}}{t}|g(n)-g(t)|\ll \frac{qx^{\delta}}{t^3}$ by the previous lemma. Therefore
\begin{equation*}
    \sum_{\substack{t\leq n\leq t+x^{\delta} \\ P^-(n)>y} }  |g(n)|^2 =\sum_{\substack{t\leq n\leq t+x^{\delta} \\ P^-(n)>y} }  |g(t)|^2 + O\Big(\frac{x^{2\delta} q }{t^3}\Big).
\end{equation*}
Recall that $x=q^{1/2}(\log q)^2$, so the error term is $O(x^{-1/10})$, say. A similar argument yields
\begin{equation*}
    |g(t)|^2=x^{-\delta} \int_t^{t+x^{\delta} } |g(u)|^2du +O\big(x^{-1/10}\big).
\end{equation*}
The same sieve argument as in the proof of Lemma \ref{condition_y} implies (assume $y<x^{1/1000}$, say) that
\begin{equation*}
    \sum_{\substack{t\leq n\leq t+x^{\delta} \\ P^-(n)>y} }  1\gg \frac{x^{\delta}}{\log y},
\end{equation*}
\newpage
\noindent so our last three equations put together imply
\begin{equation*}
    \sum_{\substack{t\leq n\leq t+x^{\delta} \\ P^-(n)>y} }  |g(n)|^2\gg \frac{1}{\log y} \int_t^{t+x^{\delta} } |g(u)|^2du +O\big(x^{-1/10}\big).
\end{equation*}
If we sum this over the appropriate $t$-s we get
\begin{equation*}
    \sum_{\substack{x^{3/4}\leq n\leq x \\ P^{-}(n)>y }} |g(n)|^2\gg \frac{1}{\log y}\int_{x^{3/4} }^x |g(u)|^2du +O\big(x^{1 -1/10}\big)=\frac{q^{1/2}}{\log y}\int_{(\log q)^{-2}}^{q^{1/8}(\log q)^{-3/2} } \frac{|g(q^{1/2}/v)|^2}{v^2}dv+O\big(x^{9/10}\big),
\end{equation*}
where we applied the substitution $v=\frac{q^{1/2}}{u}$ (recall that $x=q^{1/2}(\log q)^2$). For simplicity, let us denote $h(v)=g(q^{1/2} /v)$, so
\begin{equation*}
    h(v)=\sum_{\substack{m=1 \\ P^+(m)\leq y}}^{\infty} e^{-\pi \frac{m^2}{v^2} } f(m).
\end{equation*}
As in the proof of Lemma \ref{condition_y}, we apply Rankin's trick, however we cannot apply Lemma \ref{parseval}, because our sum is weighted, so we make use of (the first part of) Theorem 5.4 in \cite{montgomery2007multiplicative}, which is a Parseval-type identity and a generalisation of Lemma \ref{parseval}. Thus, we let $K(s)=\int_0^{\infty}h(v)v^{-s-1}dv$, so for any constant $\beta>0$ we obtain
\begin{equation*}
\begin{split}
    \frac{q^{1/2}}{\log y}\int_{(\log q)^{-2}}^{q^{1/8}(\log q)^{-3/2} } \frac{|h(v)|^2}{v^2}dv\gg &  \frac{q^{1/2}}{\log y} \bigg[  \int_{-1/2}^{1/2} |K(1/2+\beta+it)|^2 dt-q^{-\beta/9} \int_{-\infty}^{\infty} |K(1/2+\beta/2+it)|^2 \frac{dt}{1/4+t^2}\bigg]+ \\
    &+O\Big(\frac{q^{1/2}}{\log y}\int_{0}^{(\log q)^{-2}} \frac{|h(v)|^2}{v^2}dv \Big) .
\end{split}
\end{equation*}
Here the error term appears because to apply the Parseval-type identity we need to integrate from 0. Luckily, the error term is negligible, since in the range $0< v < (\log q)^{-2}$ we have $|h(v)|\ll e^{-1/v^2}$. As for $K(s)$, some calculation yields
\begin{equation*}
    K(s)=\frac{\Gamma(s/2)}{2\pi^{s/2} }\sum_{\substack{m=1 \\ P^+(m)\leq y}}^{\infty} \frac{f(m)}{m^s} = \frac{\Gamma(s/2)}{2\pi^{s/2} } \prod_{p\leq y}\Big( 1-\frac{f(p)}{p^s}\Big)^{-1},
\end{equation*}
so the argument can be finished the same way as in Section 4 (in fact, the presence of the $\Gamma$ factor makes the technicalities easier).

For odd characters, the modification is almost the same. Here \eqref{nemtommitirjak} is modified to
\begin{equation*}
    \sum_{\substack{n\leq x \\ P^-(n)>y} }n^2 \Big| \sum_{\substack{m=1 \\ P^+(m)\leq y}}^{\infty}m e^{-\pi \frac{(mn)^2}{q} } f(m)\Big|^2,
\end{equation*}
so we now let 
\begin{equation*}
    g(t)=t \sum_{\substack{m=1 \\ P^+(m)\leq y}}^{\infty}m e^{-\pi \frac{(mt)^2}{q} } f(m).
\end{equation*}
Here, by a similar argument $|g(t)|\ll t \cdot \big(\frac{q^{1/2}}{t}\big)^2=\frac{q}{t}$ and if $x^{3/4}\leq t\leq n$ then $|g(t+\alpha)-g(t)|\ll \alpha \cdot \frac{q}{t^2}$
so
\begin{equation*}
    \sum_{\substack{t\leq n\leq t+x^{\delta} \\ P^-(n)>y} }  |g(n)|^2\gg \frac{1}{\log y} \int_t^{t+x^{\delta} } |g(u)|^2du +O\big(\frac{x^{2\delta} q^2}{t^3}\big).
\end{equation*}
This implies
\begin{equation*}
   \sum_{\substack{x^{3/4}\leq n\leq x \\ P^{-}(n)>y }} |g(n)|^2\gg \frac{1}{\log y}\int_{x^{3/4}}^x |g(t)|^2 dt +O(q^{3/2-1/100}).
\end{equation*}
Here it is important that the error term is less than $q^{3/2}$. Letting $h(v)=g(q^{1/2}/v)$, we get 
\begin{equation*}
    \frac{1}{\log y}\int_{x^{3/4}}^x |g(t)|^2 dt= \frac{q^{1/2}}{\log y}\int_{(\log q)^{-2}}^{q^{1/8}(\log q)^{-3/2} } \frac{|h(v)|^2}{v^2}dv=\frac{q^{3/2}}{\log y}\int_{(\log q)^{-2}}^{q^{1/8}(\log q)^{-3/2} } \frac{1}{v^4} \Big|\sum_{\substack{m=1 \\ P^+(m)\leq y}}^{\infty} e^{-\pi \frac{m^2}{v^2} }m f(m)\Big|^2dv.
\end{equation*}
Here we can apply Parseval and Rankin's trick the same way.

\section{Acknowledgements}
The author is supported by the Warwick Mathematics Institute Centre for Doctoral Training, and gratefully acknowledges funding from the University of Warwick and the UK Engineering and Physical Sciences Research Council (Grant number: EP/TS1794X/1). Part of this material is based upon work supported by the Swedish Research Council under grant no.
2021-06594 while the author was in residence at Institut Mittag-Leffler in Djursholm,
Sweden, during Spring 2024. 
\newpage
The author is grateful to Adam Harper for the many useful discussions, suggestions and for carefully reading through an earlier version of this manuscript, and would like to thank Simon Myerson for a helpful discussion during our mentor meetings. 

\section{Rights retention}
For the purpose of open access, the author has applied a
Creative Commons Attribution (CC-BY) licence to any Author Accepted Manuscript version arising from this submission.
\printbibliography

@article{harper2023typical,
  title={The typical size of character and zeta sums is $o(\sqrt{x})$},
  author={Harper, Adam J},
  journal={arXiv preprint arXiv:2301.04390},
  year={2023}
}

@article{harper2020moments,
  title={Moments of random multiplicative functions, II: High moments},
  author={Harper, Adam J},
  journal={Algebra \& Number Theory},
  volume={13},
  number={10},
  pages={2277--2321},
  year={2020},
  publisher={Mathematical Sciences Publishers}
}

@book{montgomery2007multiplicative,
  title={Multiplicative number theory I: Classical theory},
  author={Montgomery, Hugh L and Vaughan, Robert C},
  number={97},
  year={2007},
  publisher={Cambridge university press}
}

@article{harper2013sharp,
  title={Sharp conditional bounds for moments of the Riemann zeta function},
  author={Harper, Adam J},
  journal={arXiv preprint arXiv:1305.4618},
  year={2013}
}

@article{rudnick2005lower,
  title={Lower bounds for moments of L-functions},
  author={Rudnick, Zeev and Soundararajan, K},
  journal={Proceedings of the National Academy of Sciences},
  volume={102},
  number={19},
  pages={6837--6838},
  year={2005},
  publisher={National Acad Sciences}
}

@article{szabo2023high,
  title={High moments of theta functions and character sums},
  author={Szab{\'o}, Barnab{\'a}s},
  journal={Mathematika},
  volume={70},
  number={2},
  pages={e12242},
  year={2024},
  publisher={Wiley Online Library}
}

@article{munsch2017shifted,
  title={Shifted moments of $L$-functions and moments of theta functions},
  author={Munsch, Marc},
  journal={Mathematika},
  volume={63},
  number={1},
  pages={196--212},
  year={2017},
  publisher={London Mathematical Society}
}

@article{heap2019sharp,
  title={Sharp upper bounds for fractional moments of the Riemann zeta function},
  author={Heap, Winston and Radziwi{\l}{\l}, Maksym and Soundararajan, Kannan},
  journal={The Quarterly Journal of Mathematics},
  volume={70},
  number={4},
  pages={1387--1396},
  year={2019},
  publisher={Oxford University Press}
}

@article{heap2020lower,
  title={Lower bounds for moments of zeta and  $L$-functions revisited},
  author={Heap, Winston and Soundararajan, K},
  journal={Mathematika},
  volume={68},
  number={1},
  pages={1--14},
  year={2022},
  publisher={Wiley Online Library}
}

@book{friedlander2010opera,
  title={Opera de cribro},
  author={Friedlander, John B and Iwaniec, Henryk},
  volume={57},
  year={2010},
  publisher={American Mathematical Soc.}
}

@article{de2021small,
  title={Small G{\'a}l sums and applications},
  author={de la Bret{\`e}che, R{\'e}gis and Munsch, Marc and Tenenbaum, G{\'e}rald},
  journal={Journal of the London Mathematical Society},
  volume={103},
  number={1},
  pages={336--352},
  year={2021},
  publisher={Wiley Online Library}
}

@article{munsch2016upper,
  title={Upper and lower bounds for higher moments of theta functions},
  author={Munsch, Marc and Shparlinski, Igor E},
  journal={The Quarterly Journal of Mathematics},
  volume={67},
  number={1},
  pages={53--73},
  year={2016},
  publisher={Oxford University Press}
}
\end{document}